\theoremstyle{plain}
\newtheorem{theorem}{Theorem}[section]
\newtheorem{prop}[theorem]{Proposition}
\newtheorem{lemma}[theorem]{Lemma}
\newtheorem{corollary}[theorem]{Corollary}
\theoremstyle{definition}
\newtheorem{definition}{Definition}
\theoremstyle{remark}
\newcommand{\vl}{\; | \;}
\begin{document}
\title[$3$-eigenvalue graphs with second largest eigenvalue at most $1$]{Graphs with three eigenvalues and \\ second largest eigenvalue at most $1$}

\author[X.-M. Cheng]{ Xi-Ming Cheng}
\author[G. R. W. Greaves]{ Gary R. W. Greaves$^{\dag}$ }
\thanks{$^\dag$G.R.W.G. was supported by JSPS KAKENHI; 
grant number: 26$\cdot$03903}
\author[J. H. Koolen]{ Jack H. Koolen$^\ddag$ }
\thanks{$^\ddag$J.H.K. is partially supported by the `100 talents' program of the Chinese Academy of Sciences,
and by the National Natural Science Foundation of China (No. 11471009).}


\address{School of Mathematical Sciences, 
University of Science and Technology of China,
Hefei, Anhui, 230026, P.R. China}
\email{xmcheng@mail.ustc.edu.cn}

\address{Research Center for Pure and Applied Mathematics,
Graduate School of Information Sciences, 
Tohoku University, Sendai 980-8579, Japan}
\email{grwgrvs@gmail.com}

\address{Wen-Tsun Wu Key Laboratory of CAS, School of Mathematical Sciences, 
University of Science and Technology of China,
Hefei, Anhui, 230026, P.R. China}
\email{koolen@ustc.edu.cn}

\subjclass[2010]{05E30, 05C50}

\keywords{three distinct eigenvalues, strongly regular graphs, nonregular graphs, second largest eigenvalue}

\begin{abstract}
	We classify the connected graphs with precisely three distinct eigenvalues and second largest eigenvalue at most $1$.
\end{abstract}

\maketitle

\section{Introduction}

Let $\Gamma$ be a connected graph with adjacency matrix $A$.
The eigenvalues of $\Gamma$ are defined as the eigenvalues of $A$.
Suppose that $\Gamma$ has $r$ distinct eigenvalues then, since they are real, we can arrange them as
$\theta_0 > \theta_1 > \dots > \theta_{r-1}$.
By the famous Perron-Frobenius theorem, the multiplicity of the largest eigenvalue $\theta_0$ is always one.
Therefore $\theta_1$ is the second largest eigenvalue of $\Gamma$.
In this paper we classify all connected graphs with precisely three distinct eigenvalues and second largest eigenvalue at most $1$.

This work then lies at the interface of two branches of research; the study of graphs with second largest eigenvalue at most $1$ and the study of graphs with three distinct eigenvalues.
The problem of characterising graphs with second largest eigenvalue at most $1$ was posed by A. J. Hoffman~\cite{Cvet82} and since the early 80s there has been sporadic progress on the problem.
We refer the reader to \cite{Li11} and the references therein for background of the problem. 
In 1995 W. Haemers~\cite{MK} posed the problem of studying graphs with three eigenvalues and this problem has also received some intermittent attention \cite{CGGK,Dam3ev,MK}.

For both of these problems regular graphs provide a convenient starting point.
Indeed, a regular graph with second largest eigenvalue at most $1$ is the complement of a regular graph with smallest eigenvalue at least $-2$.
Graphs with smallest eigenvalue at least $-2$ have been extensively studied since the beautiful classification theorem of P. J. Cameron et al.~\cite{CGSS76}.
On the other hand, a regular graph with three distinct eigenvalues is well-known~\cite[Lemma 10.2.1]{God01} to be a strongly regular graph and such graphs have also received a great deal of attention.
Hence it is easy to see that regular graphs with three distinct eigenvalues and second largest eigenvalue at most $1$ are strongly regular graphs whose complements have smallest eigenvalue at least $-2$.

Among nonregular graphs with three distinct eigenvalues and second largest eigenvalue at most $1$, some examples immediately occur to us.
First the complete bipartite graphs $K_{a,b}$ (with $a > b \geqslant 1$) which have eigenvalues $\sqrt{ab} > 0 > -\sqrt{ab}$.
We also have two sporadic examples which we call the \textbf{Petersen cone} and the \textbf{Fano graph} (see Figure~\ref{fig:peterfan}).
For details of their construction see \cite{Dam3ev}.

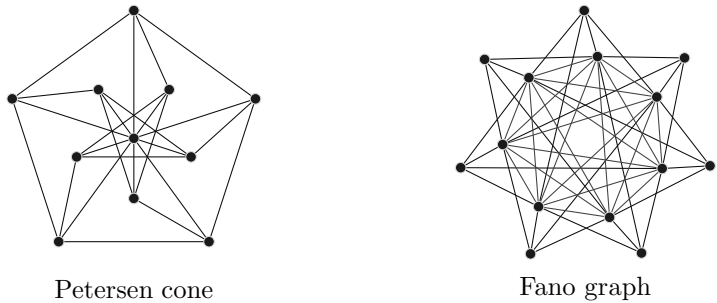
\begin{figure}[htbp]
	\centering
		\begin{tikzpicture}
			\tikzstyle{vertex}=[circle,thin,draw=black!15,fill=black!90, inner sep=0pt, minimum width=4pt]
			\tikzstyle{edge}=[draw=black!95,-]
			\tikzstyle{fedge}=[draw=black!75,-]
			\begin{scope}
				\newdimen\rad
				\rad=1.7cm
				\newdimen\radi
				\radi=1.1cm
				\def\angle{51.4285}
				\def\offsetI{39}
				\def\offsetII{30}
				\foreach \x in {0,...,6}
				{
					\draw[edge] (\offsetI+\angle*\x:\rad) -- (\offsetII+\angle*\x:\radi);
					\draw[edge] (\offsetI+\angle*\x:\rad) -- (\offsetII+\angle*\x+3*\angle:\radi);
					\draw[edge] (\offsetI+\angle*\x:\rad) -- (\offsetII+\angle*\x + \angle:\radi);
					\draw[edge] (\offsetI+\angle*\x:\rad) -- (\offsetII+\angle*\x - \angle:\radi);
			    }
				\foreach \x in {0,...,6}
				{
					\foreach \y in {\x,...,6}
					{
						\draw[fedge] (\offsetII+\angle*\x:\radi) -- (\offsetII+\angle*\y:\radi);
				    }
			    }
				\foreach \x in {0,...,6}
				{
					\draw (\offsetI + \angle*\x:\rad) node[vertex] {};
					\draw (\offsetII + \angle*\x:\radi) node[vertex] {};
			    }
				\node at (0,-2) {Fano graph};
			\end{scope}
	
			\begin{scope}[xshift=-6cm]
				\newdimen\rad
				\rad=1.7cm
				\newdimen\radi
				\radi=0.8cm
				\def\angle{72}
				\def\offsetI{18}
				\def\offsetII{-18}
				\foreach \x in {0,...,4}
				{
					\draw[edge] (\offsetI+\angle*\x:\rad) -- (\offsetII+\angle*\x:\radi);
					\draw[edge] (\offsetII+\angle*\x:\radi) -- (\offsetII+\angle*\x + 2*\angle:\radi);
					\draw[edge] (\offsetI+\angle*\x:\rad) -- (\offsetI+\angle*\x + \angle:\rad);
					\draw[edge] (\offsetI+\angle*\x:\rad) -- (0:0);
					\draw[edge] (\offsetII+\angle*\x:\radi) -- (0:0);
			    }
				\foreach \x in {0,...,4}
				{
					\draw (\offsetI + \angle*\x:\rad) node[vertex] {};
					\draw (\offsetII + \angle*\x:\radi) node[vertex] {};
			    }
				\draw (0:0) node[vertex] {};
				\node at (0,-2) {Petersen cone};
			\end{scope}
		\end{tikzpicture}
	\caption{The Petersen cone and the Fano graph.}
	\label{fig:peterfan}
\end{figure}

We will see that the complete bipartite graphs, the Petersen cone, and the Fano graph are in fact the only nonregular graphs with three distinct eigenvalues and second largest eigenvalue at most $1$.
This is our main contribution.

\begin{theorem}\label{thm:main}
	Let $\Gamma$ be a connected nonregular graph with three distinct eigenvalues and second largest eigenvalue at most $1$.
	Then $\Gamma$ is one of the following graphs.
	\begin{enumerate}[(a)]
		\item A complete bipartite graph;
		\item The Petersen cone;
		\item The Fano graph.
	\end{enumerate}
\end{theorem}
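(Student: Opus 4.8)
The plan is to exploit the strong structural constraints imposed simultaneously by the three-eigenvalue condition and the bound $\theta_1 \leqslant 1$. Write the spectrum of $\Gamma$ as $\theta_0 > \theta_1 > \theta_2$ with $\theta_1 \leqslant 1$, and let $A$ be the adjacency matrix. The first key observation is that a connected graph with exactly three distinct eigenvalues satisfies a quadratic polynomial equation off the Perron eigenspace: concretely $(A - \theta_1 I)(A - \theta_2 I) = \tfrac{1}{n}\,\alpha\, J'$ type relations, or more precisely the matrix $M := (A - \theta_1 I)(A - \theta_2 I)$ has rank one. Writing $M = \mathbf{x}\mathbf{x}^{\top}$ for the (scaled) Perron eigenvector $\mathbf{x}$, the diagonal and off-diagonal entries of $M$ encode, respectively, the degrees and the numbers of common neighbours of pairs of vertices. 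I would extract from this the two governing identities: $d_i = \theta_1\theta_2 \cdot(\text{something}) + x_i^2$ relating each degree $d_i$ to the $i$-th Perron weight $x_i$, and a relation controlling common-neighbour counts $|N(i)\cap N(j)|$ for adjacent and nonadjacent pairs $i,j$.

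Next I would pin down the distinct degrees. A standard fact (I would either cite it from the earlier references or reprove it quickly) is that a nonregular graph with three eigenvalues has at most... well, the Perron relation $d_i = x_i^2 + \theta_1\theta_2$ forces the distinct degrees to correspond to distinct values of $x_i^2$, and feeding this back into the row-sum condition $A\mathbf{x} = \theta_0 \mathbf{x}$ severely limits how many degree classes can coexist. The crucial numerical leverage comes from $\theta_1 \leqslant 1$ together with integrality of eigenvalue multiplicities. Since $\theta_1$ and $\theta_2$ are either both integers or conjugate quadratic irrationals, and since $\theta_1 \leqslant 1$ with $\theta_1 > \theta_2$, the only possibilities are essentially $\theta_1 \in \{0, 1\}$ or a tightly constrained irrational pair. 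I expect the case $\theta_1 = 0$ to force bipartiteness-like structure and lead to the complete bipartite family $K_{a,b}$, while $\theta_1 = 1$ (and the residual irrational case) should survive only in finitely many small configurations.

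For the heart of the argument I would split on $\theta_1$. When $\theta_1 = 0$: the equation $A(A - \theta_2 I) = \mathbf{x}\mathbf{x}^{\top}$ with $\theta_2 = -\sqrt{ab}$ forces $A^2$ to have a rank-one-plus-diagonal structure that characterizes complete multipartite graphs, and the three-eigenvalue condition collapses complete multipartite to complete bipartite; this recovers case (a) with spectrum $\sqrt{ab} > 0 > -\sqrt{ab}$. When $\theta_1 = 1$: here I expect the genuinely hard and interesting work. The relation $(A - I)(A - \theta_2 I) = \mathbf{x}\mathbf{x}^{\top}$ bounds common-neighbour counts and, combined with interlacing on induced subgraphs and the integrality constraints on $\theta_2$ and its multiplicity, should force $\theta_2 \in \{-2, -\sqrt{7}, \dots\}$ from a short list. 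I would then use the degree-equation to bound the number of distinct degrees (I anticipate exactly two), bound the order $n$ of $\Gamma$ via the multiplicity equations $m_1 + m_2 = n - 1$ and $\theta_0 + m_1\theta_1 + m_2\theta_2 = 0$ (trace zero) together with $\tr A^2 = \sum d_i$, and reduce to a finite search. The two sporadic survivors are the Petersen cone and the Fano graph, which I would identify by their degree sequences and explicit parameters.

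The main obstacle, I expect, is the $\theta_1 = 1$ case: ruling out an infinite family or additional sporadic graphs requires carefully combining the rank-one relation for $M$, integrality of $\theta_2$ and multiplicities, interlacing to control local structure around high- and low-degree vertices, and the feasibility constraints on common-neighbour counts, and then showing the resulting Diophantine system has only the two claimed solutions. The cleanest route is probably to first prove $\Gamma$ has exactly two distinct degrees (a two-degree graph), then show the vertices of each degree induce a strongly regular or highly structured subgraph whose parameters are forced by $\theta_1 = 1, \theta_2$ fixed, and finally eliminate all but the Petersen cone and Fano graph by a bounded case analysis.
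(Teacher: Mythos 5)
Your setup---the rank-one identity $(A-\theta_1 I)(A-\theta_2 I)=\alpha\alpha^\top$, the resulting formulas for degrees and common neighbours, the integrality analysis forcing $\theta_1\in\{0,1\}$ outside the bipartite and irrational cases, and the collapse of the $\theta_1=0$ case to complete multipartite (hence complete bipartite) graphs---matches the paper's preliminaries. But there is a genuine gap at the heart of the $\theta_1=1$ case, precisely where you write that you ``anticipate exactly two'' distinct degrees and propose to derive this from the degree equation and the row-sum condition $A\alpha=\theta_0\alpha$. No such short argument exists. Whether a three-eigenvalue graph can have many distinct valencies is essentially de Caen's open problem, and the biregular (two-valency) case of the present theorem was already settled in the authors' earlier work (Proposition~\ref{pro:secondev1}); the entire new content of this paper is ruling out graphs with three or more distinct valencies. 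Doing so requires: a separate classification of cones, which genuinely can have three valencies and must be handled by a dedicated argument (Section~\ref{sec:cones}); a long chain of inequalities on the Perron-eigenvector entries (Lemmas~\ref{lem:alphabound} through \ref{lem:a-4} and Theorems~\ref{thm:kmin}, \ref{thm:kmin2}, \ref{thm:4vals}) establishing $t=-\theta_2\leqslant 29$ and $n=O(t^2)$---your trace and multiplicity equations alone bound neither $n$ nor $\theta_2$, so your ``finite search'' is not yet finite; and a computer search over thousands of feasible spectral, valency, and multiplicity arrays, with four surviving parameter sets eliminated by ad hoc arguments.

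A second, smaller misconception: the bounded case analysis over three or more valencies produces \emph{no} graphs. The Petersen cone and the Fano graph enter the classification through the already-known biregular case (and, for the Petersen cone, through the cone analysis), not as ``sporadic survivors'' of the search. So even granting a correct reduction to a finite search, the final step of your plan would need to be reorganised: the search certifies emptiness, and the three listed graphs are inherited from Proposition~\ref{pro:secondev1} and Theorem~\ref{thm:petersencone}.
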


We remark that this theorem is dual to the classification of nonregular graphs with precisely three distinct eigenvalues and smallest eigenvalue at least $-2$ due to E. R. van Dam~\cite{Dam3ev}.
A consequence of our main theorem is a partial answer to a question D. de Caen \cite[Problem 9]{Dam05} (also see \cite{Dam14}) who asked if graphs with three distinct eigenvalues have at most three distinct valencies.
In fact it follows from our theorem that if a graph with three distinct eigenvalues has more than \emph{two} distinct valencies then it must have second largest eigenvalue greater than one.

We call a graph \textbf{biregular} if it has precisely two distinct valencies.
Recently, the following specialisation of Theorem~\ref{thm:main} to biregular graphs was established.

\begin{prop}[\mbox{\cite[Proposition 3.11]{CGGK}}]\label{pro:secondev1}
	Let $\Gamma$ be a connected biregular graph with three distinct eigenvalues and second largest eigenvalue at most $1$.
	\begin{enumerate}[(a)]
		\item A complete bipartite graph;
		\item The Petersen cone;
		\item The Fano graph.
	\end{enumerate}
\end{prop}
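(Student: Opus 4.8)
The plan is to combine the rank-one identity available for any three-eigenvalue graph with the rigidity forced by biregularity, treating the bipartite and non-bipartite cases separately. First I would record the standard structural facts. Since $\Gamma$ is connected with three distinct eigenvalues it is non-complete of diameter $2$, its minimal polynomial is $(A-\theta_0 I)(A-\theta_1 I)(A-\theta_2 I)$, and so, writing $x$ for the Perron eigenvector,
\[
  A^2-(\theta_1+\theta_2)A+\theta_1\theta_2 I=\mu\,xx^{\top},\qquad \mu=\tfrac{(\theta_0-\theta_1)(\theta_0-\theta_2)}{\langle x,x\rangle}>0 .
\]
Reading the diagonal gives $\mu x_u^2=\deg(u)+\theta_1\theta_2$, so $x$ takes only the two values attached to the two valencies; reading the off-diagonal entries shows that the number of common neighbours of a pair $\{u,v\}$ depends only on the valency classes $V_1$ (valency $k_1$), $V_2$ (valency $k_2$), with $k_1<k_2$, and on whether $u\sim v$. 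Thus $\Gamma$ carries a small set of strongly-regular-like parameters. I would also note the Gram-matrix viewpoint: $A-\theta_2 I\succeq 0$, so $\Gamma$ is represented by vectors of squared length $-\theta_2$ whose pairwise inner products lie in $\{0,1\}$.

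Next, the bipartite case. If $\Gamma$ is bipartite its spectrum is symmetric, so $\theta_1=0$ and $\theta_2=-\theta_0$; then $A^2$ is block-diagonal with respect to the bipartition, and the vanishing of its off-diagonal block forces the biadjacency matrix $B$ to be a scalar multiple of a rank-one matrix. A $\{0,1\}$-matrix of rank one with no zero row or column is all-ones, and connectedness gives $\Gamma\cong K_{a,b}$. This settles conclusion (a).

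Now suppose $\Gamma$ is non-bipartite. Because the valency partition is not itself a bipartition, some class contains an edge, and evaluating the common-neighbour parameters on adjacent and non-adjacent same-class pairs forces $\theta_1+\theta_2,\theta_1\theta_2\in\Z$ (the residual edge-cases being routine); being roots of a monic integer quadratic, $\theta_1,\theta_2$ are then both integers, since irrational conjugates occur only for the complete bipartite graphs just handled. A connected non-complete graph has a unique positive eigenvalue exactly when it is complete multipartite (Smith's theorem), so $\theta_1\le 0$ would make $\Gamma$ complete multipartite; among those the only three-eigenvalue members are the complete bipartite graphs (bipartite, excluded) and the balanced $K_{t\times a}$ (regular, excluded). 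Hence $\theta_1=1$. Writing $\theta_2=-m$ with $m\ge 1$, the case $m=1$ is impossible: then $A+I$ is a Gram matrix of unit vectors with inner products in $\{0,1\}$, so adjacency is an equivalence relation, $\Gamma$ is a disjoint union of cliques, and connectedness makes it complete with only two eigenvalues.

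It remains to analyse $\theta_1=1$, $\theta_2=-m$ with $m\ge 2$, and this is where I expect the real work. The plan is to convert the common-neighbour parameters into a Diophantine system in $(n_1,n_2,k_1,k_2,m)$ together with the internal edge-counts of the two classes, using non-negativity of every common-neighbour count and of $\mu x_u^2=\deg(u)-m$ (which already gives $k_1\ge m$, and $\deg\ge 2m-1$ for any class carrying an edge) to constrain the parameters. The main obstacle is to reduce this to finitely many feasible solutions, and in particular to rule out $m\ge 3$; I would attack this by combining the trace identities $\theta_0=m_2 m-m_1$ and $2|E|=\theta_0^2+m_1+m_2 m^2$ (with $m_1,m_2$ the multiplicities of $1$ and $-m$) with interlacing on vertex neighbourhoods, whose induced subgraphs inherit smallest eigenvalue $\ge -m$ and second eigenvalue $\le 1$. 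Once $m=2$ is isolated, $\Gamma$ has smallest eigenvalue $-2$, so its Gram vectors lie in a root lattice and the Cameron--Goethals--Seidel--Shult description applies; intersecting that description with ``three eigenvalues, $\theta_1=1$, biregular, non-bipartite'' should leave exactly the parameter sets $(n,k_1,k_2)=(11,4,10)$ and $(14,3,9)$. Finally I would reconstruct the graph uniquely from each parameter set, recognising the Petersen cone and the Fano graph and thereby establishing conclusions (b) and (c).
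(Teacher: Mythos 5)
The paper does not actually prove this proposition; it is imported verbatim from \cite[Proposition 3.11]{CGGK}, so the only available comparison is with that external proof. Your outline follows the natural route: the rank-one identity, the bipartite case yielding $K_{a,b}$ via a rank-one biadjacency matrix, then forcing $\theta_1=1$ and aiming to reduce the non-bipartite case to smallest eigenvalue $-2$ followed by the Seidel/van Dam/Cameron--Goethals--Seidel--Shult classification. The bipartite half and the derivation of the common-neighbour parameters are sound.

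There are, however, genuine gaps. First, the integrality step is wrong as stated: irrational conjugate eigenvalues do \emph{not} occur only for complete bipartite graphs. By van Dam's result (Lemma~\ref{lem:vanDamNonInt} of this paper) a non-bipartite three-eigenvalue graph can have $\theta_1,\theta_2=(-1\pm\sqrt{b})/2$; with $\theta_1\in(0,1]$ this forces $b=5$ and $\theta_2=-(1+\sqrt5)/2>-2$, a case that must be excluded separately (the paper does so via Corollary~\ref{cor:between01} and the least-eigenvalue-$\geqslant-2$ classification). Second, and more seriously, the heart of the proposition --- showing that the non-bipartite biregular case forces $\theta_2=-2$, i.e.\ reducing the Diophantine system to finitely many parameter sets --- is not proved but only announced (``I would attack this by\dots'', ``should leave exactly\dots''). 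This is precisely the hard part; note that even for the $\geqslant 3$-valency analogue the present paper needs all of Sections~\ref{sec:bounding_the_smallest_eigenvalue} and \ref{sec:description_of_our_computation}, including a computer search, to achieve the corresponding reduction, so ``trace identities plus interlacing on neighbourhoods'' cannot be accepted without being carried out. Finally, your claimed surviving parameters are partly wrong: the Fano graph has $n=14$ with valencies $4$ and $10$ and spectrum $\{8^1,1^6,(-2)^7\}$, not $(14,3,9)$; a biregular $(14,3,9)$ graph with $\theta_1=1$, $\theta_2=-2$ would have $\alpha$-values $1$ and $\sqrt7$, giving the non-integral common-neighbour count $-1+\sqrt7$ for an adjacent pair of large-valency vertices, so that parameter set is infeasible. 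These slips indicate the endgame was not actually verified.
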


Observe that our main theorem (Theorem~\ref{thm:main}) is a relaxation of the hypothesis of Proposition~\ref{pro:secondev1}.
Using well-known results, we can also incorporate the regular case.
Indeed, J. J. Seidel found the following classification.

\begin{theorem}[\mbox{\cite[Theorem 14]{sei}}]
	\label{thm:Seidel}
	Let $\Gamma$ be a connected strongly regular graph with smallest eigenvalue at least $-2$.
	Then $\Gamma$ is either
		 a triangular graph $T(m)$ for $m \geqslant 5$;
		 an $(m \times m)$-grid for $m \geqslant 3$;
		 the Petersen graph;
		 the Shrikhande graph;
		 the Clebsch graph;
		 the Schl\"{a}fli graph;
		 or one of the three Chang graphs.
\end{theorem}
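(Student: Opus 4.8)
The plan is to deduce Seidel's classification from the structure theory of graphs with smallest eigenvalue at least $-2$, in particular from the classification theorem of Cameron et al.~\cite{CGSS76} that is already available to us. Since a strongly regular graph is by definition connected and regular, and here has smallest eigenvalue at least $-2$, that theorem applies and furnishes the basic dichotomy: $\Gamma$ is either a generalized line graph, or one of finitely many \emph{exceptional} graphs, each representable within the root system $E_8$ (its vertices corresponding to roots, with $A+2I$ the Gram matrix) and hence having boundedly many vertices and bounded valency.

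Before splitting into cases I would pin down the smallest eigenvalue. Writing the three eigenvalues of $\Gamma$ as $k > r > s$, standard strongly regular graph theory gives that either $\Gamma$ is a conference graph (with $r,s$ irrational) or both $r$ and $s$ are integers. Since $s \geqslant -2$, and since $s = -1$ would force $\Gamma$ to be a complete graph (hence to have only two eigenvalues), the integral case forces $s = -2$; the only genuinely irrational conference graph meeting $s \geqslant -2$ is the pentagon $C_5$, which is disposed of directly. Thus I may assume $s = -2$, which is precisely the regime where the root-system machinery is sharp.

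In the generalized line graph case I would use regularity to reduce to an ordinary line graph $L(G)$ (with $G$ regular or bipartite semiregular) or a cocktail-party graph, and then impose strong regularity. Translating the strongly regular condition into a condition on $G$, I would show that $L(G)$ is primitive strongly regular only when $G$ is a complete graph $K_m$, yielding the triangular graph $T(m)$, or a balanced complete bipartite graph $K_{m,m}$, yielding the $(m \times m)$-grid; the small or imprimitive leftovers (the octahedron $T(4)$, the cocktail-party graphs, $C_4$) are either absorbed into these families or excluded as imprimitive.

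The remaining exceptional case is a finite problem, and I expect it to be the main obstacle. Here I would enumerate the graphs representable in $E_8$ and retain those that are regular and satisfy the strongly regular identity (equivalently $A^2 \in \operatorname{span}\{I, A, J\}$), using the constraint $s = -2$ to cut down the search; this isolates the Petersen, Shrikhande, Clebsch, and Schl\"{a}fli graphs together with the three Chang graphs. The delicate points are twofold: first, the finiteness and the vertex and valency bounds for exceptional graphs rest on the nontrivial $E_8$ representation, and carrying out the enumeration transparently rather than quoting tabulated data is laborious; second, making the line-graph reduction fully rigorous --- that strong regularity really forces $G$ to be complete or balanced complete bipartite --- requires care with the exceptional small graphs. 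Historically Seidel~\cite{sei} reached the same list by a different route, via eigenvalue interlacing and the theory of equiangular lines, which offers an alternative should the root-system enumeration prove unwieldy.
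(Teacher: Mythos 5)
The paper offers no proof of this statement: it is imported as Seidel's classification, \cite[Theorem 14]{sei}, and used as a black box, so there is no internal argument to compare yours against. Your route --- pin the smallest eigenvalue to $-2$ via the integral/conference dichotomy, invoke the Cameron--Goethals--Seidel--Shult theorem \cite{CGSS76} to split into generalized line graphs versus graphs representable in $E_8$, identify the strongly regular line graphs as the $T(m)$ and the $(m\times m)$-grids, and finish the exceptional case by a finite $E_8$ enumeration --- is the standard modern proof of this classification and is sound in outline (and, as you note, anachronistic: Seidel's original argument via switching and equiangular lines predates CGSS).

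Two points in your sketch do not survive scrutiny against the statement \emph{as written here}. First, the pentagon $C_5$ is a connected strongly regular graph with smallest eigenvalue $(-1-\sqrt{5})/2 > -2$; it is exactly the ``genuinely irrational'' conference graph your test $n \leqslant 9$ leaves standing (the $n=9$ case being the integral $3\times 3$ grid), and it resurfaces in your line-graph case since $L(C_5)=C_5$. It is not in the list, so ``disposed of directly'' cannot be made to work: the classification is only correct if read, as in Seidel's original, with smallest eigenvalue \emph{exactly} $-2$, or with $C_5$ appended. Second, the cocktail-party graphs $K_{m\times 2}$ are connected strongly regular graphs with smallest eigenvalue $-2$ and are precisely the connected regular generalized line graphs that are not line graphs; you exclude them ``as imprimitive'', but primitivity (coconnectedness) is not among the stated hypotheses. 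The paper tacitly assumes it --- the discussion around Corollary~\ref{cor:main} splits off the complete multipartite graphs separately --- so your proof needs that hypothesis made explicit before the exclusion is legitimate. Beyond these, the remaining labour is the finite $E_8$ enumeration isolating the Petersen, Shrikhande, Clebsch and Schl\"afli graphs and the three Chang graphs, which you correctly flag as the substantial (but routine and tabulated) part of the argument.
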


As we observed above, regular graphs with three distinct eigenvalues and second largest eigenvalue at most $1$ are strongly regular graphs, whose complements have smallest eigenvalue at least $-2$.
From Theorem~\ref{thm:Seidel} we know all coconnected strongly regular graphs whose complements have smallest eigenvalue at least $-2$.
The non-coconnected strongly regular graphs are complete multipartite graphs, which have second largest eigenvalue $0$.
Consequently we have the following.

\begin{corollary}\label{cor:main}
	Let $\Gamma$ be a connected graph with three distinct eigenvalues and second largest eigenvalue at most $1$.
	Then $\Gamma$ is one of the following graphs.
	\begin{enumerate}[(a)]
		\item A complete bipartite graph;
		\item The Petersen cone;
		\item The Fano graph;
		\item A complete multipartite regular graph;
		\item The complement of one of the graphs in Theorem~\ref{thm:Seidel}.
	\end{enumerate}
\end{corollary}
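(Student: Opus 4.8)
The plan is to split on whether $\Gamma$ is regular, since the nonregular case is already completely settled by Theorem~\ref{thm:main}. First I would observe that if $\Gamma$ is nonregular, then Theorem~\ref{thm:main} immediately places it in class (a), (b), or (c). This disposes of every nonregular possibility, so all the remaining work concerns the regular case.

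Next I would treat the regular case. A connected regular graph with exactly three distinct eigenvalues is strongly regular by \cite[Lemma 10.2.1]{God01}, so $\Gamma$ is strongly regular, say $k$-regular on $n$ vertices. The key computation is the standard complement eigenvalue relationship: writing $\bar A = J - I - A$, any eigenvector of $A$ orthogonal to the all-ones vector with eigenvalue $\theta$ is an eigenvector of $\bar A$ with eigenvalue $-1-\theta$. Because the second largest eigenvalue of $\Gamma$ is at most $1$, every non-principal eigenvalue $\theta$ of $\Gamma$ satisfies $\theta \leqslant 1$, hence $-1-\theta \geqslant -2$; meanwhile the principal eigenvalue $n-1-k$ of $\bar\Gamma$ is nonnegative. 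Therefore $\bar\Gamma$ is a strongly regular graph whose smallest eigenvalue is at least $-2$.

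I would then split the regular case according to whether $\bar\Gamma$ is connected. If $\bar\Gamma$ is disconnected, then as a disconnected strongly regular graph it is a disjoint union of complete graphs of equal size, so its complement $\Gamma$ is a complete multipartite regular graph, placing $\Gamma$ in class (d); one checks directly (as noted in the text) that such graphs have second largest eigenvalue $0$, confirming they genuinely satisfy the hypotheses. If instead $\bar\Gamma$ is connected, then it is a connected strongly regular graph with smallest eigenvalue at least $-2$, so Theorem~\ref{thm:Seidel} applies and identifies $\bar\Gamma$ as one of the graphs listed there; hence $\Gamma$ is the complement of such a graph, placing it in class (e).

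Since (a)--(c) exhaust the nonregular possibilities and (d)--(e) exhaust the regular ones, the classification is complete. I do not expect a genuine obstacle here: with Theorems~\ref{thm:main} and~\ref{thm:Seidel} taken as black boxes, the only real content is the complement eigenvalue identity together with the elementary fact that a disconnected strongly regular graph is a union of equal cliques. The mildest point requiring care is the disconnected subcase, where I would verify that the resulting complete multipartite regular graphs do have exactly three distinct eigenvalues with second largest at most $1$, so that class (d) lies inside the family under study rather than merely being a necessary candidate.
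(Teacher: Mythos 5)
Your proposal is correct and follows essentially the same route as the paper: the nonregular case is delegated to Theorem~\ref{thm:main}, and in the regular case one passes to the complement (which is strongly regular with smallest eigenvalue at least $-2$) and splits on whether that complement is connected, invoking Theorem~\ref{thm:Seidel} in the connected case and the complete multipartite characterisation in the disconnected case. The only difference is that you spell out the complement eigenvalue identity and the verification that class (d) genuinely satisfies the hypotheses, which the paper leaves implicit.
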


The paper is organised as follows.
In Section~\ref{sec:prelim} we develop some basic theory for graphs with three eigenvalues and state some preliminary results.
In Section~\ref{sec:cones} we prove Theorem~\ref{thm:main} for cones (see Section~\ref{sec:cones} for the definition of a cone).
We reduce the proof of Theorem~\ref{thm:main} to a finite search in Section~\ref{sec:bounding_the_smallest_eigenvalue}, and in Section~\ref{sec:description_of_our_computation} we describe the algorithm that we used to perform the finite search.

\section{Graphs with three distinct eigenvalues}
\label{sec:prelim}

In this section we develop some basic properties of graphs that have three distinct eigenvalues.
For fixed $\theta_0 > \theta_1 > \theta_2$, define the set $\mathcal G(\theta_0,\theta_1,\theta_2)$ of connected graphs having precisely three distinct eigenvalues $\theta_0$, $\theta_1$, and $\theta_2$.
Let $\Gamma$ be a graph in $\mathcal G(\theta_0,\theta_1,\theta_2)$ and let $A$ denote its adjacency matrix.
By Perron-Frobenius theory, $\theta_0$ is a simple eigenvalue whose eigenvectors have all entries of the same sign.
We denote by $\alpha$ the eigenvector for $\theta_0$ satisfying the equation
\begin{equation}
	\label{eqn:alpha}
	(A - \theta_1 I)(A-\theta_2 I) = \alpha \alpha^\top.
\end{equation}
Let $x$ and $y$ be vertices of $\Gamma$.
We denote by $\nu_{x,y}$ the number of common neighbours of $x$ and $y$.
By Equation~\eqref{eqn:alpha}, we can see that the degree $d_x$ of $x$ is given by $d_x = \alpha_x^2 - \theta_1 \theta_2$ and that $\nu_{x,y}$ is given by
\begin{equation}
	\label{eqn:commonneighbs}
	\nu_{x,y} = (\theta_1 + \theta_2)A_{x,y} + \alpha_x\alpha_y.
\end{equation}
Moreover, multiplying Equation~\eqref{eqn:alpha} by $A$ reveals the following equation.
\begin{equation}
	\label{eqn:triangles}
	A^3 = (\theta_1^2+\theta_2^2 + \theta_1\theta_2)A -(\theta_1+\theta_2)\theta_1\theta_2 I + (\theta_0+\theta_1+\theta_2)\alpha \alpha^\top.
\end{equation}

If $\Gamma$ is a graph with three distinct eigenvalues and second largest eigenvalue at most $1$ then the spectrum of $\Gamma$ is very restricted.
Indeed, below we will first show that, excepting complete bipartite graphs, the eigenvalues must be integers.

Van Dam deduced spectral properties of graphs with three eigenvalues that do not have an integral spectrum.

\begin{lemma}[\mbox{\cite[Proposition 3]{Dam3ev}}]\label{lem:vanDamNonInt}
	Let $G$ be a non-bipartite graph in $\mathcal G(\theta_0,\theta_1,\theta_2)$ with one of $\theta_0$, $\theta_1$, or $\theta_2$ not integral.
	Then $\theta_1, \theta_2 = (-1 \pm \sqrt{b})/2$ for some $b \equiv 1 \mod 4$.
\end{lemma}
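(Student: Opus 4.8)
The plan is to pin down the spectrum by combining algebraic integrality with the Perron--Frobenius structure. Since $A$ is a symmetric integer matrix, its eigenvalues are algebraic integers and the monic cubic $m(x)=(x-\theta_0)(x-\theta_1)(x-\theta_2)$, being the minimal polynomial of $A$, lies in $\mathbb{Z}[x]$; in particular $\theta_0+\theta_1+\theta_2$, $\theta_0\theta_1+\theta_1\theta_2+\theta_2\theta_0$ and $\theta_0\theta_1\theta_2$ are integers. Galois-conjugate eigenvalues (roots of a common irreducible factor of the characteristic polynomial over $\mathbb{Q}$) occur with equal multiplicity. First I would rule out the case that $m(x)$ is irreducible: then all three eigenvalues are conjugate and hence share a multiplicity, so (as $\theta_0$ is simple) $G$ has only three vertices, forcing $G=P_3$, which is bipartite. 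Thus $m(x)$ splits as a rational linear factor times an irreducible quadratic, so exactly one eigenvalue is a rational integer and the other two form an irrational conjugate pair of equal multiplicity.

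The first real task is to show that the integral eigenvalue is $\theta_0$ (so that the conjugate pair is $\{\theta_1,\theta_2\}$). Suppose instead $\theta_0$ lies in the conjugate pair. Then $\theta_0$ and its conjugate are both simple, and the remaining, integral, eigenvalue $\vartheta$ has multiplicity $n-2$, so that $A-\vartheta I$ has rank $2$. Writing $A-\vartheta I=(\theta_0-\vartheta)v_0v_0^\top+(\theta'-\vartheta)v'v'^\top$ (with $v_0$ the positive Perron vector and $v'$ the conjugate eigenvector), I would read off the off-diagonal entries on non-edges: since $(v_0)_x(v_0)_y>0$, the sign of $(v')_x(v')_y$ is forced for every non-adjacent pair $x,y$. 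Depending on whether $\vartheta=\theta_1$ or $\vartheta=\theta_2$, this shows that non-adjacent vertices always have the same sign or always have opposite signs in $v'$; equivalently $G$ is a join $G_1\vee G_2$, or $G$ is the complement of a bipartite graph. Analysing the eigenvectors supported on one side then forces each side to be complete or edgeless, so $G$ is a complete graph (only two eigenvalues), a complete bipartite graph (bipartite), or disconnected --- all excluded. I expect this rank-$2$ rigidity analysis to be the main obstacle, since it is the one step that genuinely uses both connectivity and non-bipartiteness and requires care with the signs and with the two possible positions of $\vartheta$.

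With $\theta_0\in\mathbb{Z}$ established, let $m$ be the common multiplicity of the irrational pair $\theta_1,\theta_2$, so $n=2m+1$. From $\tr A=0$ I get $\theta_0+m(\theta_1+\theta_2)=0$, hence $\theta_0=-m(\theta_1+\theta_2)$; as $\theta_0>0$ this gives $\theta_1+\theta_2<0$, and since $\theta_1+\theta_2\in\mathbb{Z}$ (because $\theta_0\in\mathbb{Z}$) we have $\theta_1+\theta_2\le-1$. On the other hand $\theta_0\le\Delta\le n-1=2m$, where $\Delta$ is the maximum degree, so $-m(\theta_1+\theta_2)\le 2m$ and therefore $\theta_1+\theta_2\ge-2$. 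If $\theta_1+\theta_2=-2$ then $\theta_0=2m=n-1=\Delta$; but for a connected graph $\theta_0=\Delta$ forces regularity, so $G=K_n$ would have only two eigenvalues, a contradiction. Hence $\theta_1+\theta_2=-1$.

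Finally, setting $q=\theta_1\theta_2\in\mathbb{Z}$, the pair $\theta_1,\theta_2$ are the roots of $x^2+x+q$, so $\theta_1,\theta_2=(-1\pm\sqrt{b})/2$ with $b=1-4q$. Since $\theta_1,\theta_2$ are irrational, $b$ is a positive non-square integer, and $b=1-4q\equiv1\pmod4$, as required.
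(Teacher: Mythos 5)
Your overall architecture is the right one (and, as far as this paper is concerned, there is nothing to compare against: the lemma is imported from van Dam with no proof given here). The opening reduction — minimal polynomial in $\mathbb{Z}[x]$, conjugate eigenvalues sharing multiplicities, the irreducible cubic case collapsing to $P_3$ — is correct, and the final paragraph (trace gives $\theta_0=-m(\theta_1+\theta_2)$, the bound $\theta_0\leqslant\Delta\leqslant n-1=2m$ gives $\theta_1+\theta_2\in\{-1,-2\}$, and $-2$ forces $K_n$) is the standard argument and is sound.

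The gap is exactly where you predicted it: the step ruling out $\theta_0$ from the conjugate pair is not actually proved. Your sign analysis on non-edges is fine and does yield that $G$ is a join $G_1\vee G_2$ (when $\vartheta=\theta_1$) or the complement of a bipartite graph (when $\vartheta=\theta_2$), but the next two claims are unsupported. First, "analysing the eigenvectors supported on one side forces each side to be complete or edgeless" does not follow from what you have: in the join case the sign condition puts no constraint at all on adjacencies inside a single side, and what the eigenvectors supported on one side actually give is only that $A[G_i]$ has $\vartheta$ as an eigenvalue of multiplicity at least $|V(G_i)|-2$ — to turn that into "complete or edgeless" you need precisely the multiplicity theorem you are trying to avoid. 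Second, even granting that each side is complete or edgeless, the conclusion "$G$ is complete, complete bipartite, or disconnected" omits the mixed case (a complete split graph such as $K_2\vee\overline{K_3}$ is none of these), so the case analysis would not close as written. The clean repair is the one already available in this paper: the integral eigenvalue $\vartheta$ would have multiplicity $n-2$, so Theorem~\ref{thm:bell} with $l=2$ forces $\vartheta\in\{0,-1\}$ or $n\leqslant 3$; then $\vartheta=0$ gives $\operatorname{rank}A=2$, i.e.\ a complete bipartite graph (excluded by non-bipartiteness), $\vartheta=-1$ gives $\operatorname{rank}(A+I)=2$, which forces two classes of identical rows, each a clique, hence $K_n$ or a disconnected graph, and $n\leqslant 3$ gives only $P_3$ or $K_3$. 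With that substitution the rest of your proof goes through.
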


Using this result we can show that graph in $\mathcal G(\theta_0,\theta_1,\theta_2)$ with an irrational second largest eigenvalue must have smallest eigenvalue greater than $-2$.

\begin{corollary}\label{cor:between01}
	Let $G$ be a graph in $\mathcal G(\theta_0,\theta_1,\theta_2)$ with $\theta_1 \in (0,1)$.
	Then $\theta_2 > -2$.
\end{corollary}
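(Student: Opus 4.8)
The plan is to use the van Dam result, Lemma~\ref{lem:vanDamNonInt}, to pin the spectrum down almost completely. Since $\theta_1 \in (0,1)$, the eigenvalue $\theta_1$ is strictly between two consecutive integers and hence is not an integer; this puts us in the non-integral regime covered by that lemma, provided we can guarantee that $G$ is non-bipartite.

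My first step is therefore to dispose of the bipartite case. A connected bipartite graph has spectrum symmetric about $0$, so if it lies in $\mathcal G(\theta_0,\theta_1,\theta_2)$ the equality $\{\theta_0,\theta_1,\theta_2\} = \{-\theta_0,-\theta_1,-\theta_2\}$ holds. Combined with $\theta_0 > 0$ (Perron--Frobenius) this forces $\theta_2 = -\theta_0$ and then $\theta_1 = 0$, since $\theta_1$ is the unique middle eigenvalue and $-\theta_1$ must again lie in the set. Because here $\theta_1 \in (0,1)$, in particular $\theta_1 \neq 0$, so $G$ cannot be bipartite and Lemma~\ref{lem:vanDamNonInt} applies.

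Invoking the lemma, I get $\theta_1,\theta_2 = (-1 \pm \sqrt{b})/2$ for some integer $b \equiv 1 \pmod 4$; since $\theta_1 > \theta_2$, this means $\theta_1 = (-1+\sqrt{b})/2$ and $\theta_2 = (-1-\sqrt{b})/2$. Imposing the hypothesis $\theta_1 \in (0,1)$ is equivalent to $1 < \sqrt{b} < 3$, i.e.\ $1 < b < 9$, and the only integer in this range with $b \equiv 1 \pmod 4$ is $b = 5$. Consequently $\theta_2 = (-1-\sqrt{5})/2 > -2$, which is exactly the claimed conclusion (indeed $\theta_2 \approx -1.618$).

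The whole argument is a short deduction once Lemma~\ref{lem:vanDamNonInt} is in hand, so I do not anticipate a real obstacle. The one point that genuinely needs care is the verification that $G$ is non-bipartite, since this is precisely what licenses the use of the lemma; everything after that is an elementary bounding of the single free parameter $b$.
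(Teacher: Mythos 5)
Your proof is correct and follows essentially the same route as the paper: it rules out the bipartite case via the symmetry of the spectrum (exactly the remark the authors make before their proof), then applies Lemma~\ref{lem:vanDamNonInt} and uses $\theta_1\in(0,1)$ to force $b=5$, giving $\theta_2=(-1-\sqrt{5})/2>-2$. No issues.
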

Note that we do not need the non-bipartite condition because we assume that $\theta_1 > 0$.
Indeed, since their spectrum is symmetric about zero, if a graph in $\mathcal G(\theta_0,\theta_1,\theta_2)$ is bipartite then $\theta_1 = 0$.
\begin{proof}
	By Lemma~\ref{lem:vanDamNonInt}, since $\theta_1$ is nonintegral, $\theta_1,\theta_2 = (-1\pm\sqrt{b})/2$ for some $b \equiv 1 \mod 4$.
	Moreover, since $\theta_1 \in (0,1)$, we have that $b = 5$.
	Therefore we have $\theta_2 = (-1-\sqrt{5})/2 > -2$.
\end{proof}

Graphs with smallest eigenvalue at least $-2$ are well understood.
In particular, for graphs with three eigenvalues, we have the following result. 

\begin{theorem}[\cite{Dam3ev},\cite{sei}]\label{thm:class-2}
	Let $G$ be a graph in $\mathcal G(\theta_0,\theta_1,\theta_2)$ with $\theta_2 \geqslant -2$.
	Then $G$ is one of the following graphs:
	\begin{enumerate}
		\item a graph from Theorem~\ref{thm:Seidel};
		\item a graph from Theorem 7 in \cite{Dam3ev}.
	\end{enumerate}
\end{theorem}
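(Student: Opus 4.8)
The plan is to prove this by splitting $\mathcal G(\theta_0,\theta_1,\theta_2)$ into its regular and nonregular members and invoking a separate classification for each, since the statement is really a union of two external theorems. As $G$ is connected with three distinct eigenvalues, if $G$ is regular then it is strongly regular (by the standard fact recalled in the introduction, \cite[Lemma 10.2.1]{God01}). A connected strongly regular graph with three distinct eigenvalues has smallest eigenvalue strictly less than $-1$, so the hypothesis $\theta_2 \geqslant -2$ confines us to $\theta_2 \in [-2,-1)$; these are exactly the graphs addressed by Seidel's classification, yielding case~(1). For the nonregular members I would apply Theorem~7 of \cite{Dam3ev} directly, as that result classifies precisely the nonregular graphs in $\mathcal G(\theta_0,\theta_1,\theta_2)$ whose smallest eigenvalue is at least $-2$, yielding case~(2).

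Carrying this out, the first task is to confirm that the two cited theorems between them leave no gaps in the regular case. First I would isolate the regular graphs with integral spectrum and $\theta_2 = -2$: here the primitive graphs are the triangular graphs, grids, Petersen, Shrikhande, Clebsch, Schl\"{a}fli and Chang graphs of Theorem~\ref{thm:Seidel}, while the imprimitive ones are the complete multipartite (cocktail party) graphs, which must be slotted explicitly into one of the two lists. I would then treat the regular graphs with non-integral spectrum: such a graph is a conference graph, and by Lemma~\ref{lem:vanDamNonInt} together with the argument behind Corollary~\ref{cor:between01} the only possibility compatible with $\theta_2 \geqslant -2$ is $\theta_1,\theta_2 = (-1\pm\sqrt5)/2$, which forces $G$ to be the pentagon $C_5$. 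This single graph likewise has to be matched against the cited classifications.

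The main obstacle is therefore not the production of new arguments — the mathematical content lies entirely in \cite{sei} and \cite{Dam3ev} — but the bookkeeping needed to verify that the union of Theorem~\ref{thm:Seidel} and Theorem~7 of \cite{Dam3ev} is both exhaustive and mutually consistent on the boundary $\theta_2 = -2$ and on the degenerate families (the complete multipartite graphs and the pentagon). Once each such graph is confirmed to occur in exactly one of the two sources, the theorem follows immediately from the regular/nonregular dichotomy. A more self-contained alternative would start from the Cameron--Goethals--Seidel--Shult description of all graphs with smallest eigenvalue at least $-2$ as generalised line graphs or exceptional graphs represented in the $E_8$ root system, and then impose the three-eigenvalue condition; but this only re-derives the two cited classifications, so the dichotomy-and-cite approach is the economical one.
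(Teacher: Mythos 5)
The paper offers no proof of this statement at all---it is quoted directly from \cite{Dam3ev} and \cite{sei}---and your regular/nonregular dichotomy (connected regular with three eigenvalues $\Rightarrow$ strongly regular $\Rightarrow$ Seidel's Theorem~\ref{thm:Seidel}; nonregular $\Rightarrow$ Theorem~7 of \cite{Dam3ev}) is exactly the intended justification. Your extra bookkeeping on the boundary cases (the complete multipartite graphs and $C_5$) is sensible but does not alter the approach, so your proposal matches the paper's.
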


By checking the spectra of the graphs Theorem~7 in \cite{Dam3ev}, we obtain the following corollary.

\begin{corollary}\label{cor:inteigs}
	Let $\Gamma \in \mathcal G(\theta_0,\theta_1,\theta_2)$ be nonbipartite and nonregular with $\theta_1 \leqslant 1$.
	Then $\theta_1 = 1$ and both $\theta_0$ and $\theta_2$ are integers.
\end{corollary}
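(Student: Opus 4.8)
The plan is to reduce the whole statement to the single claim that $\theta_1 = 1$. Indeed, once $\theta_1 = 1$ is known, the integrality of $\theta_0$ and $\theta_2$ is immediate from Lemma~\ref{lem:vanDamNonInt}: were the spectrum non-integral then, $\Gamma$ being nonbipartite, that lemma would force $\theta_1 = (-1+\sqrt{b})/2$ for some non-square $b \equiv 1 \mod 4$, and $\theta_1 = 1$ would give $b = 9$, a perfect square, a contradiction. So it remains to prove $\theta_1 = 1$, and since $\theta_1 \leqslant 1$ by hypothesis it suffices to exclude the two ranges $\theta_1 \leqslant 0$ and $0 < \theta_1 < 1$.

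First I would dispose of the range $\theta_1 \leqslant 0$. In this case $\theta_0$ is the only positive eigenvalue of $\Gamma$, and by the classical characterisation of connected graphs with exactly one positive eigenvalue, $\Gamma$ is complete multipartite. A short analysis of the equitable partition of $\Gamma$ into its colour classes shows that a complete multipartite graph has exactly three distinct eigenvalues only if either all of its parts have the same size, in which case it is regular, or it has exactly two parts, in which case it is complete bipartite and hence bipartite. As both possibilities are excluded by hypothesis, this range is vacuous and $\theta_1 > 0$.

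For the remaining range $0 < \theta_1 < 1$ I would invoke Corollary~\ref{cor:between01}, which yields $\theta_2 > -2$ and hence $\theta_2 \geqslant -2$. Theorem~\ref{thm:class-2} then applies and places $\Gamma$ either among the graphs of Theorem~\ref{thm:Seidel} or among the graphs of Theorem~7 in \cite{Dam3ev}. The former are strongly regular, hence regular, and are excluded; so $\Gamma$ must occur in van Dam's explicit list. The crux of the argument --- and essentially its only laborious step --- is then the finite inspection of that list: one checks the spectrum of each nonregular graph appearing there and verifies that none has second largest eigenvalue in the open interval $(0,1)$. This inspection is made short by Lemma~\ref{lem:vanDamNonInt}, which shows that an irrational $\theta_1 \in (0,1)$ could only be $(-1+\sqrt{5})/2$, paired with $\theta_2 = (-1-\sqrt{5})/2 \neq -2$, so that only very specific spectra need be ruled out. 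Having excluded both ranges we conclude $\theta_1 = 1$, and the integrality assertion then follows as in the first paragraph.
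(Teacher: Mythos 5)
Your proposal is correct and follows essentially the same route as the paper: the paper obtains this corollary directly from the chain Lemma~\ref{lem:vanDamNonInt} $\to$ Corollary~\ref{cor:between01} $\to$ Theorem~\ref{thm:class-2}, followed by a check of the spectra in van Dam's Theorem~7 list, which is exactly your argument for the range $0<\theta_1<1$ and for the integrality of $\theta_0,\theta_2$. Your explicit disposal of the range $\theta_1\leqslant 0$ via Smith's characterisation of complete multipartite graphs is a detail the paper leaves implicit (it records that same characterisation immediately after the corollary), but it is the intended reasoning.
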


Conversely, it also follows from Lemma~\ref{lem:vanDamNonInt} that if there is a graph in $\mathcal G(\theta_0,1,\theta_2)$ then $\theta_0$ and $\theta_2$ must be integers.
Using Equation~\eqref{eqn:commonneighbs} we can obtain the next corollary.


\begin{corollary}\label{cor:intparams}
	Let $\Gamma$ be a graph in $\mathcal G(\theta_0,1,\theta_2)$.
	Then there exists some $\omega \in \mathbb{N}$ such that each vertex $v$ has $\alpha_v = \beta_v \sqrt{\omega}$ for some $\beta_v \in \mathbb{N}$.
\end{corollary}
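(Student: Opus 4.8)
The plan is to exploit the integrality of $\theta_2$ together with the two structural equations for degrees and for common neighbours to show that every product $\alpha_x\alpha_y$ is an integer, and then to close with a short number-theoretic observation about squarefree parts.

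First I would record that $\theta_0$ and $\theta_2$ are integers; this is exactly the consequence of Lemma~\ref{lem:vanDamNonInt} noted immediately above the corollary. With $\theta_1 = 1$, the degree formula $d_v = \alpha_v^2 - \theta_1\theta_2$ becomes $\alpha_v^2 = d_v + \theta_2$, so each $\alpha_v^2$ is an integer; since $\alpha$ is the Perron eigenvector its entries are strictly positive, and hence each $\alpha_v^2$ is a \emph{positive} integer.

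Next I would substitute $\theta_1 = 1$ into Equation~\eqref{eqn:commonneighbs}. For any two distinct vertices $x$ and $y$ this reads $\alpha_x\alpha_y = \nu_{x,y} - (1 + \theta_2)A_{x,y}$, whose right-hand side is an integer because $\nu_{x,y}$ is a count, $A_{x,y} \in \{0,1\}$, and $\theta_2 \in \mathbb{Z}$. Thus $\alpha_x\alpha_y \in \mathbb{Z}$ for every pair of vertices, the diagonal case $x = y$ being already covered by the previous paragraph.

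Finally, I would write each positive integer $\alpha_v^2$ as $\omega_v k_v^2$ with $\omega_v$ squarefree and $k_v \in \mathbb{N}$, so that $\alpha_v = k_v\sqrt{\omega_v}$. For any two vertices $x$ and $y$, the integrality of $\alpha_x\alpha_y = k_x k_y\sqrt{\omega_x\omega_y}$ forces $\omega_x\omega_y$ to be a perfect square, and since $\omega_x$ and $\omega_y$ are both squarefree this is possible only if $\omega_x = \omega_y$. Hence all the $\omega_v$ coincide with a single squarefree $\omega \in \mathbb{N}$, and setting $\beta_v = k_v$ gives $\alpha_v = \beta_v\sqrt{\omega}$, as required. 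The argument is essentially routine once the integrality of $\theta_2$ is in hand; the only step demanding any care is this last one, namely the elementary fact that the product of two distinct squarefree positive integers is never a perfect square, so I do not anticipate a genuine obstacle here.
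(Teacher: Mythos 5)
Your argument is correct and follows exactly the route the paper intends: the paper gives no explicit proof, saying only that the corollary follows from Equation~\eqref{eqn:commonneighbs} once $\theta_0,\theta_2\in\mathbb{Z}$ is known, and your write-up simply fills in those details (integrality of each $\alpha_v^2$ and of each product $\alpha_x\alpha_y$, then the squarefree-part comparison). No gaps; the final step that a product of two distinct squarefree positive integers is never a perfect square is indeed elementary and correctly applied.
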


It is well-known~\cite{smith70} that complete multipartite graphs are characterised by the property that their second largest eigenvalue is at most $0$.
In particular, $\mathcal G(\theta_0,0,\theta_2)$ consists exclusively of complete multipartite graphs.
Furthermore, every regular graph in $\mathcal G(\theta_0,\theta_1,\theta_2)$ is strongly regular.
Therefore, with a view towards classification, we need only consider nonregular graphs in $\mathcal G(\theta_0,1,\theta_2)$.
Hence we define the set $\mathcal H(s,t)$ to consist of the nonregular graphs in $\mathcal G(s,1,-t)$.
In view of Corollary~\ref{cor:intparams}, for a graph $\Gamma \in \mathcal H(s,t)$ and a vertex $v \in V(\Gamma)$, we write $\omega(\Gamma)$ to denote the squarefree part of $\alpha_v^2$.
Moreover, we write $\delta(\Gamma)$ and $\Delta(\Gamma)$ respectively to denote the smallest and largest valency of $\Gamma$.

Van Dam and Kooij~\cite{DK} showed that the number $n$ of vertices of a connected graph $\Gamma$ with diameter 2 with spectral radius $\rho$ satisfies $n \leqslant \rho^2 +1$ with equality if and only if $\Gamma$ is a Moore graph of diameter 2 or $\Gamma$ is $K_{1, n-1}$. 
Since Moore graphs are regular we have the following lemma.

\begin{lemma}\label{lem:rhobound}
	Let $\Gamma$ be an $n$-vertex graph in $\mathcal H(s,t)$.
	Then $\delta(\Gamma) < s < \Delta(\Gamma)$ and $n \leqslant s^2 + 1$ with equality if and only if $\Gamma$ is $K_{1,n-1}$.
\end{lemma}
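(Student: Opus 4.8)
The plan is to realise every $\Gamma \in \mathcal H(s,t)$ as a connected graph of diameter $2$ with spectral radius $s$, and then read off both assertions: the bound on $n$ from the van Dam--Kooij theorem quoted above, and the valency inequalities from standard facts about nonregular graphs.

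First I would record two structural observations. Since $\Gamma \in \mathcal G(s,1,-t)$ it has exactly three distinct eigenvalues, and a connected graph with $r$ distinct eigenvalues has diameter at most $r-1$; hence the diameter of $\Gamma$ is at most $2$. As $\Gamma$ is nonregular it is in particular not complete, so its diameter is at least $2$, and therefore exactly $2$. Moreover, by Perron--Frobenius the spectral radius of $\Gamma$ equals its largest eigenvalue $\theta_0 = s$. With these in hand the van Dam--Kooij result applies with $\rho = s$ and yields $n \leqslant s^2 + 1$, with equality if and only if $\Gamma$ is a Moore graph of diameter $2$ or $\Gamma = K_{1,n-1}$. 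A Moore graph of diameter $2$ is regular, so the nonregularity of $\Gamma$ rules out that possibility and leaves $\Gamma = K_{1,n-1}$ as the only way equality can hold. This is precisely the claimed bound together with its equality characterisation.

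It remains to prove $\delta(\Gamma) < s < \Delta(\Gamma)$. Here I would invoke the elementary bounds $\delta(\Gamma) \leqslant \rho \leqslant \Delta(\Gamma)$ for the spectral radius $\rho$ of a connected graph, together with the well-known fact that each of these inequalities is strict unless $\Gamma$ is regular. For the upper bound, $\rho \leqslant \Delta(\Gamma)$ with equality forcing regularity; for the lower bound, the Rayleigh-quotient estimate $\rho \geqslant \mathbf 1^\top A \mathbf 1 / \mathbf 1^\top \mathbf 1 \geqslant \delta(\Gamma)$ holds, and the first step is an equality only when $\mathbf 1$ is a $\rho$-eigenvector, i.e.\ only when $\Gamma$ is regular. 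Since $\Gamma$ is nonregular and $\rho = s$, both inequalities are strict, giving $\delta(\Gamma) < s < \Delta(\Gamma)$.

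I do not anticipate a serious obstacle: the content of the lemma is essentially a repackaging of the van Dam--Kooij theorem, and the only points requiring genuine care are the verification that $\Gamma$ has diameter exactly $2$ and the strictness of the degree--spectral-radius inequalities, both of which rely on the nonregularity hypothesis built into the definition of $\mathcal H(s,t)$.
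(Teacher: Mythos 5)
Your proof is correct and follows essentially the same route as the paper: the authors quote the van Dam--Kooij bound for diameter-$2$ graphs and dismiss the Moore-graph equality case by regularity, exactly as you do, with the degree inequalities $\delta(\Gamma) < s < \Delta(\Gamma)$ being the standard strict bounds for nonregular connected graphs. You supply more detail than the paper (the diameter-$2$ verification and the strictness arguments are left implicit there), but the underlying argument is the same.
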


Bell and Rowlinson exhibited an upper bound for the number of vertices in terms of the multiplicity of one of its eigenvalues.

\begin{theorem}[\mbox{\cite[Theorem 2.3]{br03}}]\label{thm:bell} 
	Let $\Gamma$ be an $n$-vertex graph with an eigenvalue $\theta$ with multiplicity $n-l$ for some positive integer $l$.
	Then either $\theta \in \{0, -1\}$ or $n \leqslant \frac{l(l+1)}{2}$.
\end{theorem}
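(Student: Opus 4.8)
The plan is to convert the multiplicity hypothesis into a low-rank condition and then count by passing to symmetric squares. Since $\theta$ has multiplicity $n-l$, the matrix $B := A - \theta I$ has rank $l$. Writing $B = X^\top S X$ for its inertia decomposition, where $X$ is an $l \times n$ matrix of rank $l$ and $S = \operatorname{diag}(\pm 1)$, I obtain vectors $x_1, \dots, x_n \in \R^l$ (the columns of $X$) with $x_i^\top S x_j = B_{ij}$; in particular $x_i^\top S x_i = -\theta$ and $x_i^\top S x_j = A_{ij} \in \{0,1\}$ for $i \ne j$. I would then consider the symmetric squares $x_i x_i^\top$, which lie in the space $\mathrm{Sym}_l$ of symmetric $l \times l$ matrices, of dimension $l(l+1)/2$. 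If these $n$ matrices are linearly independent, then immediately $n \le l(l+1)/2$, which is exactly the desired bound.

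So everything reduces to proving linear independence of the $x_i x_i^\top$, equivalently the implication $B \operatorname{diag}(c) B = 0 \Rightarrow c = 0$. Indeed, writing $D = \operatorname{diag}(c)$, one checks that $\sum_i c_i x_i x_i^\top = 0$ holds iff the bilinear form $(y,z)\mapsto\sum_i c_i (x_i^\top y)(x_i^\top z)$ vanishes, and since the vectors $S x_j$ span $\R^l$ this is equivalent to $\sum_i c_i B_{ij} B_{ik} = (BDB)_{jk} = 0$ for all $j,k$. Suppose then $BDB = 0$ with $c \ne 0$. Reading off the diagonal entries and using $B_{ii}^2 = \theta^2$ and $B_{ij}^2 = A_{ij}$, I get $(A + \theta^2 I)c = 0$, so $c$ is an eigenvector of $A$ for the value $-\theta^2$, and hence $Bc = -\theta(\theta+1)c$.

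Here the two excluded values enter in a single decisive place: the scalar $\lambda := -\theta(\theta+1)$ vanishes precisely when $\theta \in \{0,-1\}$, so under the hypothesis $\lambda \ne 0$. Multiplying $BDB = 0$ on the right by $c$ and using $Bc = \lambda c$ together with $Dc = c \circ c$ (the entrywise square) gives $\lambda\, B(c \circ c) = 0$, whence $B(c \circ c) = 0$; that is, $c \circ c$ is a $\theta$-eigenvector of $A$. But $c \circ c$ is a nonnegative, nonzero vector. Since $\Gamma$ is connected, Perron--Frobenius supplies a strictly positive $\theta_0$-eigenvector $\alpha$, and as eigenspaces for distinct eigenvalues are orthogonal, $\theta \ne \theta_0$ would force $\alpha^\top (c \circ c) = 0$, contradicting $\alpha^\top(c \circ c) = \sum_i \alpha_i c_i^2 > 0$. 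Hence $\theta = \theta_0$, which is simple, so $l = n-1$ and the claimed bound $n \le (n-1)n/2$ holds trivially for $n \ge 3$. In the complementary case $\theta \ne \theta_0$ there is no nonzero $c$, the squares $x_i x_i^\top$ are independent, and $n \le l(l+1)/2$ as required.

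The main obstacle is exactly the step where the naive rank count breaks down, namely when $-\theta^2$ is itself an eigenvalue of $A$, so that $A + \theta^2 I$ is singular and independence is not automatic. The crux is to escape this by extracting from any putative dependence the nonnegative eigenvector $c \circ c$ and playing it against the Perron eigenvector; this is where connectedness and the exclusion of \emph{both} $0$ and $-1$ are used, and it is reassuring that the extremal graphs $K_{1,n-1}$ (with $\theta = 0$) and $K_n$ (with $\theta = -1$) are precisely the cases that the hypothesis rules out.
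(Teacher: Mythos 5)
The paper does not prove this statement: it is quoted verbatim from Bell and Rowlinson \cite[Theorem 2.3]{br03}, so there is no internal proof to compare yours against. Your argument is correct and self-contained, in the style of the classical ``absolute bound'' proofs: writing $B=A-\theta I$ as a rank-$l$ decomposition $X^\top SX$, passing to the symmetric squares $x_ix_i^\top$ in the $l(l+1)/2$-dimensional space of symmetric $l\times l$ matrices, and showing that a dependence $BDB=0$ forces $(A+\theta^2I)c=0$ via the diagonal entries and then, using $\theta\notin\{0,-1\}$, that $c\circ c$ is a nonnegative $\theta$-eigenvector, which Perron--Frobenius excludes unless $\theta=\theta_0$. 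Each step checks out (in particular the equivalence between $\sum_i c_ix_ix_i^\top=0$ and $BDB=0$ is justified correctly by the full row rank of $X$). Two small caveats, neither of which affects the paper's applications (where $\theta$ is $1$ or $-t$ with $t\geqslant 3$, never the Perron root): connectedness is not stated in the hypothesis but is genuinely needed --- your Perron--Frobenius step uses it, and the bound fails for, say, $2K_2$ with $\theta=1$, where $n=4>3$ --- which is consistent with the paper's standing convention that all graphs are connected; and in your terminal case $\theta=\theta_0$ the inequality $n\leqslant n(n-1)/2$ needs $n\geqslant 3$, while $K_2$ with $\theta=1$ actually violates the statement as literally written, so this residual case reflects a missing edge hypothesis in the quoted statement rather than a gap in your proof.
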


Let $\Gamma$ be a graph in $\mathcal H(s,t)$ and suppose $\Gamma$ has $r$ distinct valencies $k_1 < \dots < k_r$.  
We define $V_i := \{ v \in V(\Gamma) \; | \; d_v = k_i \}$ and we say that each $k_i$ has multiplicity $n_i := |V_i|$ for $i \in \{1,\dots,r\}$.
We will refer to the $n_i$ as the \textbf{valency multiplicities} (see Section~\ref{sec:description_of_our_computation}).
The subsets $V_i$ form what we call the \textbf{valency partition} of $\Gamma$.

Let $\pi = \{\pi_1,\dots,\pi_r\}$ be a partition of the vertices of $\Gamma$.
For each vertex $x$ in $\pi_i$, write $d_{ij}^{(x)}$ for the number of neighbours of $x$ in $\pi_j$.
Then we write $b_{ij} = 1/|\pi_i|\sum_{x \in \pi_i} d_{ij}^{(x)}$ for the average number of neighbours in $\pi_j$ of vertices in $\pi_i$.
The matrix $B_\pi := (b_{ij})$ is called the \textbf{quotient matrix} of $\pi$ and $\pi$ is called \textbf{equitable} if for all $i$ and $j$, we have $d_{ij}^{(x)} = b_{ij}$ for each $x \in \pi_i$.
We will use repeatedly properties of the quotient matrices of partitions of the vertex set of a graph and we refer the reader to Godsil and Royles' book~\cite[Chapter 9]{God01} for the necessary background on equitable partitions and interlacing.

In the case when $\Gamma$ has at most three valencies we can use the following result.

\begin{lemma}[See \cite{Dam3ev}]\label{lem:equitable}
	Let $\Gamma$ be a graph in $\mathcal H(s,t)$ that has at most three distinct valencies.
	Then the valency partition of $\Gamma$ is equitable.
\end{lemma}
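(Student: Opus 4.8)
The plan is to pin down, for each vertex $x$ lying in a valency class $V_i$, the number $m_{il}^{(x)}$ of neighbours of $x$ in each class $V_l$, and to show that these numbers depend only on $i$ and not on the particular $x$. Recall that for $\Gamma \in \mathcal H(s,t)$ we have $\theta_1 = 1$ and $\theta_2 = -t$, so that Equation~\eqref{eqn:alpha} reads $(A-I)(A+tI) = \alpha\alpha^\top$ and $d_x = \alpha_x^2 + t$; in particular $\alpha$ is constant on each valency class, taking a value $a_i > 0$ on $V_i$, and distinct valencies correspond to distinct values $a_i$. Since there are at most three classes, I would aim to produce three linear equations in the unknowns $m_{il}^{(x)}$ whose right-hand sides depend only on $i$ and whose coefficient matrix is a Vandermonde matrix in the $a_l$.

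The three equations I would use are the zeroth, first, and second ``moments'' of $\alpha$ over the neighbourhood of $x$. The zeroth moment is just the degree equation $\sum_l m_{il}^{(x)} = k_i$. The first moment comes from the Perron equation $A\alpha = s\alpha$: evaluating at $x$ gives $\sum_{z \sim x}\alpha_z = s a_i$, that is, $\sum_l a_l\, m_{il}^{(x)} = s a_i$. For the second moment I would compute $\sum_{z\sim x}\alpha_z^2 = \sum_l a_l^2\, m_{il}^{(x)}$ and show that it depends only on $i$. Here I use that the entrywise square $\alpha \circ \alpha$ equals the degree vector minus $t\mathbf 1$, i.e.\ $A\mathbf 1 - t\mathbf 1$, so that $\sum_{z\sim x}\alpha_z^2 = (A(\alpha\circ\alpha))_x = (A^2\mathbf 1)_x - t\,(A\mathbf 1)_x$; expanding $A^2 = \alpha\alpha^\top - (t-1)A + tI$ from Equation~\eqref{eqn:alpha} shows that $(A^2\mathbf 1)_x$ is a function of $d_x$ (and of $a_i$) alone, hence so is the whole expression. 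Thus $\sum_l a_l^2\, m_{il}^{(x)} = C_i$ for a constant $C_i$ depending only on $i$.

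These three relations assemble into a linear system with coefficient matrix $\begin{pmatrix}1 & 1 & 1\\ a_1 & a_2 & a_3\\ a_1^2 & a_2^2 & a_3^2\end{pmatrix}$ (with the obvious truncation when there are fewer than three classes) acting on the vector $(m_{i1}^{(x)}, m_{i2}^{(x)}, m_{i3}^{(x)})^\top$. Because the $a_l$ are distinct and positive, this Vandermonde matrix has rank equal to the number $r \leqslant 3$ of classes, so the system has a unique solution. Since all three right-hand sides depend only on $i$, the solution $m_{il}^{(x)}$ is forced to be independent of the choice of $x \in V_i$, which is exactly the assertion that the valency partition is equitable.

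I expect the main obstacle to be the second-moment computation: the first two equations are immediate, but the third requires recognising that $\alpha \circ \alpha$ is an affine function of the degree vector and then using the quadratic relation \eqref{eqn:alpha} to see that $(A^2\mathbf 1)_x$ collapses to a function of $d_x$. Once that is in hand, the Vandermonde invertibility---which is precisely where the hypothesis $r \leqslant 3$ is used---finishes the argument at once.
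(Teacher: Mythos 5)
Your argument is correct: the three moment equations (degree count, the Perron relation $A\alpha = s\alpha$, and the second moment obtained from $\alpha\circ\alpha = A\mathbf{1}-t\mathbf{1}$ together with $A^2 = \alpha\alpha^\top -(t-1)A+tI$) all have right-hand sides depending on $x$ only through its valency class, and the Vandermonde system in the distinct values $a_l$ then forces the neighbourhood distribution to be constant on each class. The paper itself gives no proof, deferring to \cite{Dam3ev}, and your proof is essentially the standard argument found there, so there is nothing to correct.
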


In the remainder of the paper our graphs $\Gamma$ will always be connected.
Moreover, we will reserve the letter $A$ to denote the adjacency matrix of $\Gamma$, $n$ to denote the number of vertices of $\Gamma$, $\alpha$ to denote the Perron-Frobenius eigenvector as defined by Equation~\eqref{eqn:alpha}, and $m$ to denote the multiplicity of the smallest eigenvalue of $\Gamma$.
In particular, for a graph $\Gamma \in \mathcal G(\theta_0,\theta_1,\theta_2)$, since the trace of $A$ is $0$ we have
\begin{equation}
	\label{eqn:multeq}
	m = (\theta_0 - (n-1)\theta_1)/(\theta_1-\theta_2).
\end{equation}

\section{Cones} 
\label{sec:cones}

A graph $\Gamma$ (with $n$ vertices) is called a \textbf{cone} if it has a vertex $v$ with degree $d_v = n-1$.
In this section we consider the case when $\Gamma \in \mathcal H(s,t)$ is a cone.

By Proposition~\ref{pro:secondev1}, we need only consider cones with at least three distinct valencies.
In fact, by the following result of Van Dam, we need only consider cones with \emph{precisely} three valencies.

\begin{theorem}[See \cite{Dam3ev}, \mbox{\cite[Corollary 2.4]{CGGK}}]\label{thm:cone3}
	Let $\Gamma$ be a cone in $\mathcal H(s,t)$.
	Then $\Gamma$ has at most three distinct valencies.
\end{theorem}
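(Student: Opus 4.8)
The plan is to exploit Equation~\eqref{eqn:commonneighbs} together with the universality of the cone vertex to force every non-cone entry of the Perron--Frobenius eigenvector to satisfy one and the same quadratic equation. Fix the cone vertex $v$, so that $d_v = n-1$ and $A_{v,y} = 1$ for every $y \neq v$. Since $v$ is adjacent to all other vertices, the common neighbours of $v$ and any $y \neq v$ are exactly the neighbours of $y$ other than $v$ itself; hence $\nu_{v,y} = d_y - 1$.

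Recalling that for a graph in $\mathcal H(s,t)$ we have $(\theta_1,\theta_2) = (1,-t)$, so that $d_x = \alpha_x^2 + t$ and $\nu_{x,y} = (1-t)A_{x,y} + \alpha_x\alpha_y$, I would substitute $A_{v,y} = 1$ and $\nu_{v,y} = d_y - 1 = \alpha_y^2 + t - 1$ into Equation~\eqref{eqn:commonneighbs}. After rearranging, this yields
\[
	\alpha_y^2 - \alpha_v\alpha_y + 2(t-1) = 0 \qquad \text{for every } y \neq v.
\]
The crucial point is that the coefficients $\alpha_v$ and $2(t-1)$ do not depend on $y$, so every non-cone entry $\alpha_y$ is a root of a single fixed quadratic polynomial. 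A quadratic has at most two roots, and therefore the values $\alpha_y$ with $y \neq v$ take at most two distinct values.

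It then remains to convert this into a bound on valencies. Since $\alpha$ is a Perron--Frobenius eigenvector, all its entries are strictly positive, and the map $\alpha \mapsto \alpha^2 + t$ is injective on the positive reals. Consequently, distinct valencies among the non-cone vertices correspond to distinct values of $\alpha_y$, of which there are at most two. Adjoining the single cone vertex $v$, whose valency is $n-1$, yields at most three distinct valencies in total, as required.

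There is no serious technical obstacle once the quadratic is in hand; the only step demanding care is the evaluation $\nu_{v,y} = d_y - 1$, where one must correctly account for the fact that $v$ is itself a neighbour of $y$ (but not a common neighbour), since it is precisely the resulting constant term $2(t-1)$ that makes the polynomial the \emph{same} for all $y$ and thereby caps the number of admissible Perron entries at two.
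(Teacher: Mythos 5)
Your proof is correct and follows essentially the same route the paper uses: the paper cites this result but reproduces the identical computation at the start of the proof of Lemma~\ref{lem:techcone}, where $\nu_{v,x}=d_v-1$ yields $(\alpha_x-\alpha_v)\alpha_v=-(\theta_1+1)(\theta_2+1)=2(t-1)$, a fixed quadratic admitting at most two values of $\alpha_v$ among non-cone vertices. Your handling of the constant term and the injectivity of $\alpha\mapsto\alpha^2+t$ on positive entries is exactly what is needed.
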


Now we give a technical result about cones with three valencies.

\begin{lemma}\label{lem:techcone}
	Let $\Gamma$ be cone in $\mathcal G(\theta_0,\theta_1,\theta_2)$ with three valencies.
	Set $A:=-\theta_1\theta_2$ and $B:=-\theta_1-\theta_2-1$.
	Then
	\begin{enumerate}[(a)]
		\item $5A+4B+1 < n \leqslant (A+B+1)^2 + A+1$;
		\item if $B > 0$ then $n \leqslant (A+B)^2/B +3(A+B)+1$;
		\item $\theta_0^2-(n-2-A-B)\theta_0 - (n-1)(1-A-B) = 0$.
		\item $n \geqslant 3(A+B)+\sqrt{8(A+B)(A+B-1)}$.
	\end{enumerate}
\end{lemma}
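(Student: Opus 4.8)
The plan is to read off the whole structure of $\Gamma$ from the Perron--Frobenius eigenvector $\alpha$ and the equitable valency partition. Write $C:=A+B$; note $C=-(\theta_1+1)(\theta_2+1)$, and in the applications of this lemma $\Gamma$ is nonbipartite with $\theta_1=1$, so by Corollaries~\ref{cor:inteigs} and~\ref{cor:intparams} the numbers $A,B$ are integers, $C>0$, and each $\alpha_x^2=d_x-A$ is a positive integer. Let $v$ be an apex, $d_v=n-1$. As $v$ is adjacent to every other vertex, the common neighbours of $v$ and any $y\ne v$ are just the neighbours of $y$ other than $v$, so $\nu_{v,y}=d_y-1$; feeding this into Equation~\eqref{eqn:commonneighbs} and using $d_y=\alpha_y^2+A$ gives the key relation
\[
	\alpha_v\alpha_y=\alpha_y^2+A+B\qquad(y\ne v).
\]
Hence every non-apex entry is a root of $t^2-\alpha_v t+C=0$, so takes one of two values $\alpha_\pm$ with $\alpha_++\alpha_-=\alpha_v$ and $\alpha_+\alpha_-=C$. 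Since $\alpha>0$ and $C>0$ we have $\alpha_v>\alpha_+>\alpha_->0$, so $v$ is the \emph{unique} apex and the values $\alpha_\pm$ give the two non-apex valencies $k_1=\alpha_-^2+A<k_2=\alpha_+^2+A$; three valencies forces $\alpha_+\ne\alpha_-$ with both classes nonempty. Finally $\alpha_v^2=n-1-A$, whence $n=\alpha_+^2+\alpha_-^2+3A+2B+1$, and $C=\alpha_+\alpha_->\alpha_-^2\ge1$, so the radicand in (d) is nonnegative.

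For (c) I would sum the eigenvector equation $\sum_{z\sim x}\alpha_z=\theta_0\alpha_x$ over all $x\ne v$. The left-hand side equals $(n-1)\alpha_v+\sum_{y\ne v}(d_y-1)\alpha_y$, while the right-hand side is $\theta_0\sum_{y\ne v}\alpha_y=\theta_0^2\alpha_v$ by the apex equation. Writing $d_y\alpha_y=\alpha_y^3+A\alpha_y$ and reducing cubes via $\alpha_y^3=(\alpha_v^2-C)\alpha_y-\alpha_vC$ (from the key relation), the whole identity collapses to a scalar equation in $\theta_0$ and $n$; substituting $\alpha_v^2=n-1-A$ produces exactly the quadratic in (c). The lower bound in (a) is then immediate: distinctness of $\alpha_\pm$ means $t^2-\alpha_v t+C$ has positive discriminant $\alpha_v^2-4C>0$, i.e.\ $n-1-A>4C$, which rearranges to $5A+4B+1<n$.

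The upper bounds I would prove by rewriting them through $\alpha_\pm$. Using $n=\alpha_+^2+\alpha_-^2+3A+2B+1$ and $\alpha_+^2\alpha_-^2=C^2$, the bound in (a) is equivalent to $\alpha_+^2+\alpha_-^2\le C^2+1$, i.e.\ $(\alpha_+^2-1)(\alpha_-^2-1)\ge0$; this holds because $\alpha_-^2=k_1-A$ is a positive integer, hence $\alpha_-^2\ge1$, and $\alpha_+^2>\alpha_-^2$. Likewise, when $B>0$ the bound in (b) is equivalent to $(\alpha_+^2-B)(\alpha_-^2-B)\ge0$, and since $\alpha_+^2-B=k_2-C$ and $\alpha_-^2-B=k_1-C$ this reads $(k_1-C)(k_2-C)\ge0$. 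As $k_1<k_2$ it suffices to show $k_1\ge C$: if some $y\in V_1$ and $x\in V_2$ are nonadjacent then $\nu_{x,y}=\alpha_+\alpha_-=C$, and a common-neighbour count never exceeds $\min(k_1,k_2)=k_1$. The degenerate case where the two non-apex classes are completely joined I would dispatch separately, using the analogous count for an adjacent pair (which gives $A\le k_1$) together with the within-class inequality $\alpha_-^2\ge B+1$.

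Part (d) is where I expect the real work. Here I would invoke Lemma~\ref{lem:equitable} so that the partition $\{v\},V_1,V_2$ is equitable and its $3\times3$ quotient matrix has spectrum $\{\theta_0,\theta_1,\theta_2\}$. Writing the within-class parameters $a,b,c,d$ (with $a+b=k_2-1$, $c+d=k_1-1$), the trace and the sum of $2\times2$ principal minors give $a+d=\theta_0-B-1$ and $ad-bc=(n-1)-\theta_0(B+1)-A$. Since $b,c\ge0$ we get $ad\ge ad-bc$, while $ad\le\tfrac14(a+d)^2$; combining the two yields $(\theta_0+B+1)^2\ge4(n-1-A)=4\alpha_v^2$, i.e.\ the lower bound $\theta_0\ge2\alpha_v-(B+1)$. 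Substituting this into the quadratic (c) and eliminating $\theta_0$ should produce $n^2-6Cn+C^2+8C\ge0$ together with $n\ge3C$, which is precisely (d). The \emph{main obstacle} is exactly this elimination: one must exploit the non-negativity of the quotient-matrix entries (and possibly the determinant relation and the cross relation $n_2b=n_1c$) tightly enough that, after combining with (c), the separate dependence on $A$ and $B$ cancels and only $C=A+B$ survives; the boundary configurations (an independent non-apex class, or complete join between the classes) will likely have to be verified by hand.
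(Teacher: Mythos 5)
Your parts (a)--(c) are essentially sound and, for (a)--(b), follow the same route as the paper: the relation $\alpha_v\alpha_y=\alpha_y^2+A+B$ for the apex $v$, the Vieta relations $\alpha_++\alpha_-=\alpha_v$, $\alpha_+\alpha_-=A+B$, uniqueness of the apex, and the rewriting of the upper bounds as $(\alpha_+^2-1)(\alpha_-^2-1)\geqslant 0$ and $(\alpha_+^2-B)(\alpha_-^2-B)\geqslant 0$ are exactly equivalent to the paper's maximisation of $u+(A+B)^2/u$ over $[A+B,(A+B)^2]$, resp.\ $[\,A+B,(A+B)^2/B]$. Your derivation of (c), by summing the eigenvector equation over the non-apex vertices and reducing cubes via $\alpha_y^3=(\alpha_v^2-A-B)\alpha_y-(A+B)\alpha_v$, is a legitimate alternative to the paper's computation with $\tr A^2=\theta_0^2+(n-1-m)\theta_1^2+m\theta_2^2$ and the formulas for $n_1,n_2$; I checked that it collapses to the stated quadratic. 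Two caveats. First, the inequality $\alpha_-\geqslant 1$ (used by you and by the paper) is only justified via Corollary~\ref{cor:intparams} in the $\theta_1=1$ setting, which you correctly flag. Second, for (b) the paper does not need your ``completely joined'' degenerate case: it cites that the complement of such a cone has exactly two components (\cite[Theorem 2.2: Claim 3]{CGGK}), which forces a nonadjacent pair $y\in V_1$, $z\in V_2$ to exist; your sketched fallback ($A\leqslant k_1$ from an adjacent cross pair plus $\alpha_-^2\geqslant B+1$ from a within-class edge) does not close as stated, e.g.\ when $V_1$ is an independent set there is no within-class edge to count.

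The genuine gap is part (d), which you single out as ``the real work'' and leave unfinished. In fact (d) is an immediate consequence of (c): since $\theta_0$ is a \emph{real} root of the quadratic in (c), its discriminant is nonnegative, and
\[
(n-2-A-B)^2+4(n-1)(1-A-B)\;=\;\bigl(n-3(A+B)\bigr)^2-8(A+B)(A+B-1)\;\geqslant\;0 .
\]
By the lower bound in (a), $n>5A+4B+1>3(A+B)$, so taking square roots gives $n-3(A+B)\geqslant\sqrt{8(A+B)(A+B-1)}$, which is (d). The quotient-matrix machinery you propose (trace, principal $2\times 2$ minors, nonnegativity of the entries, the relation $n_2b=n_1c$) is not needed, and --- as you yourself concede --- the elimination it would require is not carried out; so as written the proposal does not prove (d).
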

\begin{proof}
    Let $k_0 = n-1$, $k_1$, and $k_2$ with $k_1 > k_2$ be the three valencies of $\Gamma$ and define $V_i = \{ v \in V(\Gamma) \; | \; d_v = k_i \}$ and $n_i = |V_i|$.

	Let $x$ be a vertex with $d_x = n-1$ and let $v$ be a vertex in $V_1 \cup V_2$.
	Then $\nu_{v,x} = d_v - 1$ and hence we have $(\alpha_x-\alpha_v)\alpha_v = -(\theta_1+1)(\theta_2+1)$.
	Therefore there are two possible values for $\alpha_v$, which sum to $\alpha_x$.
	Note that, since $\theta_2 \ne -1$, the entry $\alpha_v$ cannot be equal to $\alpha_x$.
	Thus we must have $n_0 = 1$.
	
	Let $y \in V_1$ and $z \in V_2$ such that $y \not \sim z$.
	(Such vertices exist since the complement of $\Gamma$ has two connected components \cite[Theorem 2.2: Claim 3]{CGGK}.)
	Then we have 
	\begin{align}
		\alpha_y+\alpha_z &= \alpha_x = \sqrt{n-1+\theta_1\theta_2}; \label{eqn:al1} \\
		\alpha_y\alpha_z &= -(\theta_1+1)(\theta_2+1) = A+B. \label{eqn:al2}
	\end{align}
	Since $(\alpha_y+\alpha_z)^2 > 4\alpha_y\alpha_z$, using Equations~\eqref{eqn:al1} and \eqref{eqn:al2}, we have $n-1+\theta_1\theta_2> 4(A+B)$ from which we obtain our first bound $n > 5A +4B + 1$.
	On the other hand we have
	\[
		 n-1+\theta_1\theta_2 = (\alpha_y+\alpha_z)^2 = 2\alpha_y\alpha_z + \alpha_y^2 + \alpha_z^2 = 2(A+B) + \alpha_y^2 + (A+B)^2/\alpha_y^2.
	\]
	Since $\alpha_z \geqslant 1$ and $\alpha_y > \alpha_z$, the inequality $\sqrt{A+B} \leqslant \alpha_y \leqslant A+B$ also follows from Equation~\eqref{eqn:al2}.
	Therefore $n \leqslant (A+B+1)^2+A+1$ and we have shown Item (a).
		
	Since $y \not \sim z$, we have $\nu_{y,z} = \alpha_y\alpha_z \leqslant d_z = \alpha_z^2 - \theta_1\theta_2$.
	Hence $\alpha_z^2 \geqslant B$.
	Furthermore, if $B > 0$ then, using Equation~\eqref{eqn:al2}, we have $\alpha_y = (A+B)/\alpha_z \leqslant (A+B)/\sqrt{B}$.
	This implies Item (b).
	
	Since $\alpha$ is an eigenvector for $\theta_0$, we can write $n_1\alpha_y + n_2\alpha_z = \theta_0\alpha_x$.
	Combine this with the equation $n = 1 + n_1 + n_2$ to obtain
    \[
	    \begin{aligned}
	         n_1 = \frac{\theta_0\alpha_x-(n-1)\alpha_z}{\alpha_y-\alpha_z}; 
	    \end{aligned} \quad
	    \begin{aligned}
	         n_2 = \frac{(n-1)\alpha_y - \theta_0\alpha_x}{\alpha_y-\alpha_z}.
	    \end{aligned}
    \]
	From the trace of the square of the adjacency matrix of $\Gamma$, we have
	\[
		k_0 +n_1 k_1 +n_2 k_2 =\theta_0^{2} + (n-1-m)\theta_1^2 +m \theta_2^{2}.
	\]
	Using Equation~\eqref{eqn:multeq}, the expressions for $n_i$ and writing the $k_i$ in terms of $n$, $\alpha_y$, $\alpha_z$, and $\theta_1\theta_2$, we can deduce Item (c).
	
	Finally, using Item (c), and the fact that $\theta_0$ is real, we deduce the discriminant $(n-2-A-B)^2-4(n-1)(A+B-1)$ is nonnegative.
	Therefore, $(n-3A-3B)^2 \geqslant 8(A+B)(A+B-1)$, and Item (d) follows using the lower bound of  Item (a).
\end{proof}

Now we can classify the cones in $\mathcal H(s,t)$.

\begin{theorem}\label{thm:petersencone}
     Let $\Gamma$ be a cone in $\mathcal H(s,t)$.
	 Then $\Gamma$ is the Petersen cone.
 \end{theorem}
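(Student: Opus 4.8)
The plan is to combine the four bounds in Lemma~\ref{lem:techcone} with the integrality constraints coming from Corollary~\ref{cor:intparams} to reduce the problem to a finite, small search. Since $\Gamma \in \mathcal H(s,t)$ we have $\theta_1 = 1$, so with $\theta_2 = -t$ the quantities become $A = t$ and $B = t - 2$. The key observation is that Items (a), (b), and (d) of Lemma~\ref{lem:techcone} all bound $n$ in terms of $A+B = 2t-2$, and these bounds are only mutually compatible for small values of $t$. In particular I would first use Item (d), which gives a lower bound $n \geqslant 3(A+B) + \sqrt{8(A+B)(A+B-1)}$, together with the upper bound from Item (b) (valid once $B = t-2 > 0$, i.e.\ $t \geqslant 3$), namely $n \leqslant (A+B)^2/B + 3(A+B) + 1$. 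Writing $u := A+B = 2t-2$, compatibility of these two inequalities forces $\sqrt{8u(u-1)} \leqslant u^2/B + 1$, and since $B = t-2$ grows linearly while the left-hand side grows like $u^{3/2}$, this can only hold for finitely many (indeed very few) values of $t$.

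Next I would treat the remaining small cases directly. For $t \leqslant 2$ the relevant bounds from Item (a) already pin $n$ into a short range, and for each surviving value of $t$ the inequalities $5A + 4B + 1 < n \leqslant (A+B+1)^2 + A + 1$ leave only a handful of candidate values of $n$. For each such pair $(t,n)$ I would use Item (c), the quadratic $\theta_0^2 - (n-2-A-B)\theta_0 - (n-1)(1-A-B) = 0$, to solve for $\theta_0$; since by Corollary~\ref{cor:inteigs} we know $\theta_0$ must be a positive integer, this is a strong filter — most $(t,n)$ pairs give a non-integral or negative root and are discarded. The surviving triples $(\theta_0, 1, -t)$ with their forced value of $n$ are the only possible spectra.

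For each admissible spectrum I would then recover the structural data: from Equations~\eqref{eqn:al1} and \eqref{eqn:al2} the entries $\alpha_y$ and $\alpha_z$ are the two roots of $X^2 - \alpha_x X + (A+B) = 0$ with $\alpha_x = \sqrt{n-1+\theta_1\theta_2}$, and Corollary~\ref{cor:intparams} demands that $\alpha_y, \alpha_z$ be integer multiples of a common $\sqrt{\omega}$; this integrality of the multiplicities $n_1, n_2$ (via the displayed formulas for $n_1$ and $n_2$) eliminates still more cases. I expect that a single candidate spectrum survives, corresponding to the Petersen cone, and I would finish by checking that the resulting parameter set is realised uniquely — here one can identify the induced graph on $V_1 \cup V_2$ as a strongly regular graph with the computed parameters, which must be the Petersen graph, whence $\Gamma$ is its cone.

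The main obstacle I anticipate is not any single inequality but controlling the interplay between them carefully enough to be certain no case is overlooked: the bounds in Lemma~\ref{lem:techcone} are individually clean, but squeezing them simultaneously requires tracking $t$ and $n$ jointly and verifying integrality of $\theta_0$, $n_1$, $n_2$, and $\omega$ at each step. The delicate point is that the gap between the lower bound (d) and the upper bounds (a),(b) closes slowly, so one must argue rigorously that only the smallest values of $t$ survive rather than relying on an asymptotic estimate; making that finiteness argument fully explicit, and then confirming that the unique surviving realisable case is exactly the Petersen cone, is where the real work lies.
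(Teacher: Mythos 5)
Your finiteness argument is essentially the paper's: with $A=t$, $B=t-2$ you play the lower bound of Item (d) against the upper bound of Item (b) to force $t$ into a short list (the paper gets $t\leqslant 6$), and then you use $\alpha_y\alpha_z=A+B$, Corollary~\ref{cor:intparams}, the quadratic in Item (c), and integrality of $n_1,n_2$ to kill the remaining cases. That part of the plan is sound. But there is a genuine gap in how you set up the case division and in what you expect the search to return. Lemma~\ref{lem:techcone} applies only to cones with \emph{three} distinct valencies, and the Petersen cone is not such a graph: it has exactly two valencies ($10$ at the apex and $4$ elsewhere), so it is biregular and invisible to the entire analysis you describe. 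Consequently your three-valency search cannot terminate with ``a single candidate spectrum \ldots corresponding to the Petersen cone''; it must terminate with \emph{every} candidate eliminated (in the paper, e.g.\ for $t=3$ one finds $n_1=0$, a contradiction), and the Petersen cone has to be produced elsewhere.

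Concretely, two steps are missing. First, you need Theorem~\ref{thm:cone3} to know that a cone in $\mathcal H(s,t)$ has at most three distinct valencies, so that the dichotomy ``two valencies or three valencies'' is exhaustive. Second, you need to handle the two-valency case, which is exactly Proposition~\ref{pro:secondev1} applied to biregular graphs; that is where the Petersen cone actually arises (the complete bipartite graphs and the Fano graph being excluded because only the Petersen cone is a cone). Without these, your argument proves only that no cone with three valencies exists, and the statement of the theorem does not follow. Your closing suggestion of identifying the induced subgraph on $V_1\cup V_2$ as the Petersen graph is aimed at a configuration (a three-valency cone) that never materialises; the correct identification happens inside the biregular classification, not at the end of the search you describe.
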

 \begin{proof}
     By Theorem~\ref{thm:cone3}, we know $\Gamma$ has at most three valencies.
	 If $\Gamma$ has only two distinct valencies then the theorem follows from Proposition~\ref{pro:secondev1}.
     Assume $\Gamma$ has three distinct valencies.
	 
	 Item (a) of Lemma~\ref{lem:techcone} gives $n <10t-5+4/(t-2)$
	 On the other hand, Item (d) of Lemma~\ref{lem:techcone} gives $n \geqslant 6t-6+ \sqrt{8(4t^2 -10t +6)}$.
	 Comparing bounds we see that $t \leqslant 6$.
	 It remains to check case by case when $t = 3, 4, 5$, and $6$.
	
	Suppose $t=3$.
	Let $x$ and $y$ be vertices with $d_y< d_x < n-1$.
	Then $\alpha_x\alpha_y = 2(t-1) = 4$ and hence, by Corollary~\ref{cor:intparams}, we have $(\alpha_x,\alpha_y) \in \{(4,1), (2\sqrt{2},\sqrt{2})\}$.
	Using the proof of Lemma~\ref{lem:techcone}, we also know that $\alpha_x+\alpha_y = \sqrt{n-1-t}$ and $s^2-(n-2t)s - (n-1)(2t-3) = 0$.
	It follows that $(\alpha_x,\alpha_y) = (2\sqrt{2},\sqrt{2})$, $n = 22$, and $s = 7$.
	Let $n_1$ denote the number vertices $v$ with $d_v = d_x$.
	Then, using the formula for $n_1$ in the proof of Lemma~\ref{lem:techcone}, we have $n_1 = \frac{s\sqrt{n-1-t}-(n-1)\alpha_y}{\alpha_x-\alpha_y} = 0$.
	Hence, $\Gamma$ only has two distinct valencies and this is a contradiction.
	The cases for $t = 4, 5$, and $6$ follow similarly.
 \end{proof}

Henceforth we may assume our graphs are neither cones nor biregular graphs.
We denote by $\mathcal H^\prime(s,t)$ the set $\mathcal H(s,t)$ without cones and biregular graphs.

In the sequel we will take advantage of the following structural characterisation of cones with three eigenvalues.

\begin{theorem}[\mbox{\cite[Theorem 2.2]{CGGK}}]\label{thm:disconn}
	Let $\Gamma$ be a graph in $\mathcal H(s,t)$.
	Then $\Gamma$ is a cone if and only if the complement of $\Gamma$ is disconnected.
\end{theorem}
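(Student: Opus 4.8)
The plan is to prove both implications, the forward one being immediate and the reverse one carrying the bulk of the work. If $\Gamma$ is a cone with a vertex $v$ of degree $n-1$, then $v$ is isolated in $\overline{\Gamma}$, so $\overline{\Gamma}$ is disconnected; this settles one direction. For the converse, suppose $\overline{\Gamma}$ has components $C_1,\dots,C_k$ with $k\geqslant 2$, and set $n_i=|C_i|$. Since $\overline{\Gamma}$ has no edges between distinct components, every vertex of $C_i$ is $\Gamma$-adjacent to every vertex of $C_j$ for $i\neq j$; thus $\Gamma$ is the join of the induced subgraphs $\Gamma[C_1],\dots,\Gamma[C_k]$. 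A component with $n_i=1$ is a vertex isolated in $\overline{\Gamma}$, hence universal in $\Gamma$, so it suffices to show that some $n_i=1$.

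The engine of the argument is a rigid relation on the Perron eigenvector $\alpha$. For $u\in C_i$ and $w\in C_j$ with $i\neq j$ we have $u\sim w$, and counting common neighbours component by component, together with the degree formula $d_x=\alpha_x^2+t$, gives $\nu_{u,w}=\alpha_u^2+\alpha_w^2+2t-n$. Comparing with Equation~\eqref{eqn:commonneighbs}, which reads $\nu_{u,w}=(1-t)+\alpha_u\alpha_w$ here, I obtain
\begin{equation*}
	\alpha_u^2-\alpha_u\alpha_w+\alpha_w^2=n-3t+1
	\qquad (u\in C_i,\ w\in C_j,\ i\neq j).
	\tag{$\star$}
\end{equation*}

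Next I would use $(\star)$ to constrain $\alpha$ severely. Subtracting the instances of $(\star)$ for two vertices $u,u'\in C_i$ against a fixed $w$ outside $C_i$ factors as $(\alpha_u-\alpha_{u'})(\alpha_u+\alpha_{u'}-\alpha_w)=0$, so for each $i$ either $\alpha$ is constant on $C_i$ or $\alpha$ is constant on $V(\Gamma)\setminus C_i$. Because $\alpha$ is strictly positive, the two roots of $X^2-cX+(c^2-(n-3t+1))$ cannot both occur as Perron values on distinct components (their own instance of $(\star)$ would force $c^2=n-3t+1$, hence a vanishing root); a short case check then forces $k=2$ and shows that at least one of the two parts, say $\Gamma[C_2]$, has constant $\alpha$ and is therefore regular. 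Recording the integrality of the values $\alpha_v=\beta_v\sqrt{\omega}$ from Corollary~\ref{cor:intparams} at this stage keeps the ensuing parameter analysis finite.

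Finally, writing $\Gamma=\Gamma[C_1]\vee\Gamma[C_2]$ with $\Gamma[C_2]$ regular of internal degree $\rho_2$, I would observe that any eigenvector of $\Gamma[C_2]$ orthogonal to $\mathbf 1_{C_2}$ lifts, by extension with zeros, to an eigenvector of $\Gamma$ with the same eigenvalue; hence every non-Perron eigenvalue of $\Gamma[C_2]$ lies in $\{1,-t\}$, so $\Gamma[C_2]$ is strongly regular or complete. Combining the eigenvector equations for $\alpha$ on the two parts, the $2\times2$ quotient matrix of the equitable partition $\{C_1,C_2\}$ (whose eigenvalues are $\theta_0$ and some $\lambda\in\{1,-t\}$), the degree relation $\alpha^2+t=\rho_i+n_{3-i}$ on each part, and the multiplicity formula~\eqref{eqn:multeq}, I would derive that the only nonregular possibility has $\min(n_1,n_2)=1$, that is, $\Gamma$ is a cone. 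The main obstacle is exactly this last elimination: ruling out both parts having at least two vertices is where the soft structural argument stops helping and one must invoke the feasibility constraints on the strongly regular part together with the parameter arithmetic above; the subcase in which the non-constant part $C_1$ also carries two distinct $\alpha$-values needs separate care, since there the lifting argument does not apply directly to $\Gamma[C_1]$.
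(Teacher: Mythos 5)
The paper does not prove this statement at all: it is imported wholesale as \cite[Theorem 2.2]{CGGK}, so there is no internal proof to compare you against, and your attempt has to stand on its own. Judged that way, the opening is sound. The forward direction is trivial, the join decomposition is correct, and your relation $(\star)$, $\alpha_u^2-\alpha_u\alpha_w+\alpha_w^2=n-3t+1$ for $u,w$ in distinct components, follows correctly from $\nu_{u,w}=d_u+d_w-n$ together with Equation~\eqref{eqn:commonneighbs}. The reduction to $k=2$ with at least one part carrying a constant $\alpha$ also works, although your one-line justification only covers the configuration where two distinct roots land in two components both different from that of the reference vertex; the case in which every component is $\alpha$-constant needs its own elimination of $k\geqslant 3$ (it does go through: three pairwise-distinct constants force a zero entry of $\alpha$, and all-equal constants force regularity).

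The genuine gap is the final step, and you name it yourself. Nothing in the write-up actually establishes $\min(n_1,n_2)=1$; the sentence ``I would derive that the only nonregular possibility has $\min(n_1,n_2)=1$'' is a statement of intent, and the ingredients you list (the $2\times 2$ quotient matrix, the lifting of non-principal eigenvalues of the regular part into $\{1,-t\}$, Equation~\eqref{eqn:multeq}) do not by themselves exclude a join of two parts each of size at least $2$ --- in particular the subcase where $C_1$ carries two distinct $\alpha$-values is exactly where the lifting argument is unavailable, as you note. Two further small points: $\Gamma[C_2]$ need not be connected (only $\overline{\Gamma}[C_2]$ is), so ``strongly regular or complete'' needs the disconnected regular case dispatched separately; and if you want traction on the two-valued subcase, observe that every vertex of $C_2$ is a common neighbour of any $u,u'\in C_1$, so $\nu_{u,u'}\geqslant n_2$, which combined with Equation~\eqref{eqn:commonneighbs} and the identities $r_1+r_2=c$, $r_1r_2=c^2-(n-3t+1)$ gives the kind of parameter constraints the elimination needs. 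As submitted, this is a correct framework with the decisive elimination missing, i.e.\ an incomplete proof.
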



\section{Reduction to a finite search} 
\label{sec:bounding_the_smallest_eigenvalue}

In this section we reduce the proof of Theorem~\ref{thm:main} to a finite search.
The key results of this section are Lemma~\ref{lem:boundVerts} and Theorem~\ref{thm:4vals}.
We begin by stating a result from the authors' previous work on graphs with three eigenvalues~\cite{CGGK}.
This result essentially follows from Equation~\eqref{eqn:commonneighbs}.

\begin{lemma}[\mbox{\cite[Lemma 2.6]{CGGK}}]\label{lem:alphabound}
	Let $\Gamma$ be a graph in $\mathcal H(s,t)$ and let $x$ and $y$ be vertices having valencies $d_x > d_y$.
	Then we have the following inequalities:
	\begin{enumerate}
		\item if $x \sim y$ then $\alpha_x-1 \leqslant (\alpha_x-\alpha_y)\alpha_y \leqslant 2(t-1)$ and $\alpha_x\alpha_y \geqslant t-1$;
		\item if $x \not \sim y$ then $\alpha_x-1 \leqslant (\alpha_x-\alpha_y)\alpha_y \leqslant t$.
	\end{enumerate}
\end{lemma}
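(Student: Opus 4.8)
The plan is to derive all four inequalities from Equation~\eqref{eqn:commonneighbs} together with the two ways of reading off the degree from the Perron eigenvector, namely $d_v = \alpha_v^2 - \theta_1\theta_2 = \alpha_v^2 + t$ (recall $\theta_1 = 1$, $\theta_2 = -t$, so $\theta_1\theta_2 = -t$ and $\theta_1+\theta_2 = 1-t$). First I would rewrite the common-neighbour identity in this setting: for adjacent $x,y$ we have $\nu_{x,y} = (1-t) + \alpha_x\alpha_y$, and for non-adjacent $x,y$ we have $\nu_{x,y} = \alpha_x\alpha_y$. The strategy throughout is to sandwich $\nu_{x,y}$ between the trivial combinatorial bounds $0 \leqslant \nu_{x,y} \leqslant \min(d_x,d_y) = d_y$ (using $d_x > d_y$), and then translate these bounds back into statements about the $\alpha$-entries via $d_y = \alpha_y^2 + t$.

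For the non-adjacent case, the upper bound is immediate: $\nu_{x,y} = \alpha_x\alpha_y \leqslant d_y = \alpha_y^2 + t$, which rearranges to $(\alpha_x - \alpha_y)\alpha_y \leqslant t$. For the adjacent case I would apply the same inequality $\nu_{x,y} \leqslant d_y$ to get $(1-t) + \alpha_x\alpha_y \leqslant \alpha_y^2 + t$, i.e. $(\alpha_x-\alpha_y)\alpha_y \leqslant 2t - 1$; obtaining the stated sharper bound $2(t-1)$ requires squeezing out one more unit, which I expect comes from an integrality or parity refinement of $\nu_{x,y}$ rather than a crude count — this is the step I would watch most carefully. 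The companion inequality $\alpha_x\alpha_y \geqslant t-1$ in the adjacent case should follow from $\nu_{x,y} \geqslant 0$, since $\nu_{x,y} = (1-t)+\alpha_x\alpha_y \geqslant 0$ gives exactly $\alpha_x\alpha_y \geqslant t-1$.

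The remaining piece is the common \emph{lower} bound $\alpha_x - 1 \leqslant (\alpha_x-\alpha_y)\alpha_y$, which appears in both cases. Here I would not use $\nu_{x,y}$ for the pair $(x,y)$ directly; instead I suspect the bound comes from comparing degrees via the integrality furnished by Corollary~\ref{cor:intparams}. Writing $\alpha_v = \beta_v\sqrt{\omega}$ with $\beta_v \in \mathbb{N}$, the condition $d_x > d_y$ forces $\beta_x \geqslant \beta_y + 1$, so $\alpha_x \geqslant \alpha_y + \sqrt{\omega}$; multiplying the consecutive-integer gap by $\alpha_y$ and manipulating should yield $(\alpha_x-\alpha_y)\alpha_y \geqslant \alpha_x - 1$ (this inequality is equivalent to $\alpha_x(\alpha_y - 1) \geqslant \alpha_y^2 - 1 = (\alpha_y-1)(\alpha_y+1)$, which when $\alpha_y > 1$ reduces to $\alpha_x \geqslant \alpha_y + 1$, and is automatic when $\alpha_y \leqslant 1$).

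The main obstacle I anticipate is the discrete book-keeping that upgrades the loose bound $2t-1$ to $2(t-1)$ and that supplies the uniform lower bound $\alpha_x - 1 \leqslant (\alpha_x-\alpha_y)\alpha_y$: both hinge on using the integrality of $\nu_{x,y}$ and of the $\beta_v$ from Corollary~\ref{cor:intparams} rather than on real-analytic estimates alone. Since the statement is quoted verbatim from \cite[Lemma 2.6]{CGGK}, the cleanest route is to reference that proof; but if I were reproving it in isolation I would first fix the two expressions for $\nu_{x,y}$, then handle the four inequalities in the order (non-adjacent upper), (adjacent upper), (adjacent lower via $\nu \geqslant 0$), and finally the shared lower bound using the integral structure of the eigenvector entries, where the only genuine care is required.
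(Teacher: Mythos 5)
Your overall route --- sandwiching $\nu_{x,y}$ between $0$ and a degree bound via Equation~\eqref{eqn:commonneighbs}, and extracting the shared lower bound from the integrality supplied by Corollary~\ref{cor:intparams} --- is exactly the route the paper points to (it imports the lemma from \cite{CGGK} with the remark that it ``essentially follows from Equation~\eqref{eqn:commonneighbs}''), and three of the four inequalities are correctly established: the non-adjacent upper bound from $\nu_{x,y}=\alpha_x\alpha_y\leqslant d_y=\alpha_y^2+t$; the bound $\alpha_x\alpha_y\geqslant t-1$ from $\nu_{x,y}=1-t+\alpha_x\alpha_y\geqslant 0$; and the lower bound $(\alpha_x-\alpha_y)\alpha_y\geqslant\alpha_x-1$, which, as your factorisation shows, is equivalent to $(\alpha_y-1)(\alpha_x-\alpha_y-1)\geqslant 0$ and hence follows from $\alpha_y\geqslant\sqrt{\omega}\geqslant 1$ together with $\alpha_x-\alpha_y=(\beta_x-\beta_y)\sqrt{\omega}\geqslant 1$, both consequences of Corollary~\ref{cor:intparams} and $d_x>d_y$.

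The one genuine gap is the adjacent upper bound, where you stop at $(\alpha_x-\alpha_y)\alpha_y\leqslant 2t-1$ and speculate that the missing unit comes from an integrality or parity refinement of $\nu_{x,y}$. That mechanism does not work: $(\alpha_x-\alpha_y)\alpha_y=\omega\beta_y(\beta_x-\beta_y)$ is indeed a positive integer, but nothing forces it to be even, so integrality alone cannot exclude the value $2t-1$. The correct refinement is combinatorial and is in fact the ``crude count'' done slightly more carefully: when $x\sim y$, a common neighbour of $x$ and $y$ is a neighbour of $y$ distinct from $x$, so $\nu_{x,y}\leqslant d_y-1$ rather than $d_y$. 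Substituting $\nu_{x,y}=1-t+\alpha_x\alpha_y$ and $d_y=\alpha_y^2+t$ into $\nu_{x,y}\leqslant d_y-1$ yields $(\alpha_x-\alpha_y)\alpha_y\leqslant 2t-2=2(t-1)$ exactly. With that one-line repair the argument is complete.
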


Lemma~\ref{lem:alphabound} enables us to bound the size of the valencies of graphs in $\mathcal H^\prime(s,t)$.

\begin{corollary}\label{cor:maxDeg}
	Let $\Gamma$ be a graph in $\mathcal H^\prime(s,t)$.
	Then for all vertices $x \in V(\Gamma)$ we have $\alpha_x \leqslant t+1$.
\end{corollary}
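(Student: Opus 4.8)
The plan is to bound $\alpha_x$ for the vertex $x$ of largest valency $\Delta(\Gamma)$ by exploiting Lemma~\ref{lem:alphabound} against a carefully chosen comparison vertex $y$. Since $\Gamma \in \mathcal H'(s,t)$ is not biregular, it has at least three distinct valencies, so we may pick vertices of differing valencies to feed into the lemma. Write $\alpha_x = \beta_x\sqrt\omega$ with $\beta_x \in \mathbb N$ as guaranteed by Corollary~\ref{cor:intparams}, so that controlling $\alpha_x$ amounts to controlling an integer.

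First I would take $x$ to be a vertex of maximum valency, so $\alpha_x = \max_v \alpha_v$, and let $y$ be \emph{any} vertex with $d_y < d_x$. The two cases of Lemma~\ref{lem:alphabound} both give $(\alpha_x - \alpha_y)\alpha_y \leqslant t$ (the nonadjacent case directly, and the adjacent case via $2(t-1) \leqslant t$ failing only for small $t$, so more care is needed there). The cleanest route is to observe that $(\alpha_x-\alpha_y)\alpha_y \leqslant t$ when $x\not\sim y$ gives, upon writing $f(\alpha_y) = (\alpha_x-\alpha_y)\alpha_y$, a constraint that is weakest precisely when $\alpha_y$ is near $\alpha_x/2$. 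The idea is then to argue that if $\alpha_x$ were too large, every smaller $\alpha_y$ would violate one of the inequalities, forcing a contradiction with the existence of at least one vertex of smaller valency adjacent (or nonadjacent) to $x$.

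The main technical step is to handle the \emph{adjacent} case, where the bound $(\alpha_x-\alpha_y)\alpha_y \leqslant 2(t-1)$ is what we have, together with $\alpha_x\alpha_y \geqslant t-1$. Combining these: from $\alpha_x\alpha_y \geqslant t-1$ we get $\alpha_y \geqslant (t-1)/\alpha_x$, and substituting into $(\alpha_x-\alpha_y)\alpha_y \leqslant 2(t-1)$ yields an inequality purely in $\alpha_x$ and $t$. Concretely, $\alpha_x\alpha_y - \alpha_y^2 \leqslant 2(t-1)$ with $\alpha_y \geqslant (t-1)/\alpha_x$ forces $\alpha_x^2 - (t-1) \leqslant 2(t-1)\alpha_x^2/(t-1)$ after clearing, which rearranges to a quadratic bound on $\alpha_x$. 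I would solve this quadratic to extract $\alpha_x \leqslant t+1$, checking that the nonadjacent case $(\alpha_x-\alpha_y)\alpha_y \leqslant t$ gives a uniformly tighter or equal conclusion so that the worst case is indeed the adjacent one.

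The hard part will be ensuring that a suitable comparison vertex $y$ of smaller valency is actually adjacent (or nonadjacent) to $x$ in a way that lets us invoke the correct branch of Lemma~\ref{lem:alphabound}, and in arguing that the extremal configuration saturating the quadratic corresponds exactly to the threshold $t+1$. Since $\Gamma$ is connected, $x$ has at least one neighbour; if some neighbour has smaller valency we are in the adjacent case, and otherwise all neighbours of $x$ have valency $\geqslant d_x$, forcing $x$ to have a nonneighbour of smaller valency (as $\Gamma$ is not regular and not a cone, so $d_x < n-1$). Thus in every case one of the two branches applies, and the quadratic analysis delivers $\alpha_x \leqslant t+1$ for all $x$, since $x$ was chosen of maximal $\alpha$-value.
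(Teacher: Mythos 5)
Your proposal has a genuine gap in its central technical step. In the adjacent case, the inequalities from Lemma~\ref{lem:alphabound} are $\alpha_x-1 \leqslant (\alpha_x-\alpha_y)\alpha_y \leqslant 2(t-1)$ and $\alpha_x\alpha_y \geqslant t-1$, and these simply do not force $\alpha_x \leqslant t+1$: the best absolute bound they yield is $\alpha_x \leqslant 2t-1$ (from $\alpha_x - 1 \leqslant 2(t-1)$). For a concrete witness, take $t=10$, $\alpha_x = 15$, $\alpha_y = 1$: then $(\alpha_x-\alpha_y)\alpha_y = 14 \geqslant \alpha_x - 1$, $14 \leqslant 2(t-1)=18$, and $\alpha_x\alpha_y = 15 \geqslant t-1 = 9$, so all adjacent-case constraints are satisfied with $\alpha_x > t+1$. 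Your proposed "quadratic analysis" combining the two adjacent-case inequalities does not close; the substitution $\alpha_y \geqslant (t-1)/\alpha_x$ gives no useful lower bound on $(\alpha_x-\alpha_y)\alpha_y$ because that expression is small when $\alpha_y$ is near $\alpha_x$ as well as when it is near $0$. Only the nonadjacent branch, $\alpha_x - 1 \leqslant (\alpha_x-\alpha_y)\alpha_y \leqslant t$, delivers $\alpha_x \leqslant t+1$. Consequently your case split fails exactly in the branch where some neighbour of $x$ has smaller valency: you cannot conclude anything strong enough there.

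The missing idea is that one must \emph{always} produce a nonadjacent pair $x \not\sim y$ with $d_x = \Delta(\Gamma) > d_y$, and this is where the hypothesis that $\Gamma$ is not a cone enters via Theorem~\ref{thm:disconn}: if every maximum-valency vertex were adjacent to every vertex of smaller valency, the complement of $\Gamma$ would be disconnected and $\Gamma$ would be a cone. Hence some maximum-valency vertex $x$ has a nonneighbour $y$ with $d_y < d_x$, and the nonadjacent branch of Lemma~\ref{lem:alphabound} gives $\alpha_x \leqslant t+1$, which bounds $\alpha_v$ for every vertex $v$ since $\alpha_x$ is maximal. This is the paper's argument. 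Your observation that $d_x \leqslant n-2$ guarantees \emph{some} nonneighbour is not enough, since that nonneighbour could also have maximum valency, in which case Lemma~\ref{lem:alphabound} (which requires $d_x > d_y$) does not apply.
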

\begin{proof}
	Suppose every vertex with valency $d$ was adjacent to every vertex of valency not equal to $d$.
	Then the complement of $\Gamma$ would be disconnected and by Theorem~\ref{thm:disconn}, $\Gamma$ would be a cone.
	Since $\Gamma$ is not a cone it must have two nonadjacent vertices $x \not \sim y$ with $d_x > d_y$.
\end{proof}

	The next corollary follows easily from Lemma~\ref{lem:alphabound} by checking the minima of negative parabolas with bounded domains.

\begin{corollary}\label{cor:tboundal}
	Let $\Gamma$ be a graph in $\mathcal H^\prime(s,t)$ and let $x$ and $y$ be vertices with $d_x = \Delta(\Gamma)$ and $d_x > d_y$.
	Then
	\begin{enumerate}[(a)]
		\item If $x \sim y$ and $\alpha_x \geqslant \alpha_y+4 \geqslant 8$ then $\alpha_x \leqslant (t+7)/2$. 
		\item If $x \not \sim y$ and $\alpha_x \geqslant \alpha_y + 2 \geqslant 4$ then $\alpha_x \leqslant (t+4)/2$. 
	\end{enumerate}
\end{corollary}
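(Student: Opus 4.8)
The plan is to view the quantity $(\alpha_x-\alpha_y)\alpha_y$ from Lemma~\ref{lem:alphabound} as a function of the single variable $\alpha_y$, with $\alpha_x$ held fixed. Writing $g(u) = (\alpha_x - u)u = -u^2 + \alpha_x u$, this is a downward-opening parabola with axis of symmetry at $u = \alpha_x/2$; hence on any closed interval its minimum is attained at an endpoint, and on an interval that is symmetric about $\alpha_x/2$ the two endpoints yield the same (minimal) value. In each of the two cases the hypotheses confine $\alpha_y$ to precisely such a symmetric interval, so I can read off a lower bound for $(\alpha_x-\alpha_y)\alpha_y$ and then combine it with the upper bound supplied by Lemma~\ref{lem:alphabound} to obtain a linear inequality in $\alpha_x$ that rearranges to the desired conclusion.

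For Item (a), the hypothesis $\alpha_x \geqslant \alpha_y + 4 \geqslant 8$ confines $\alpha_y$ to $[4,\alpha_x-4]$, whose midpoint is exactly $\alpha_x/2$; evaluating $g$ at an endpoint gives $(\alpha_x-\alpha_y)\alpha_y \geqslant 4\alpha_x-16$. Since $x \sim y$, part (1) of Lemma~\ref{lem:alphabound} gives $(\alpha_x-\alpha_y)\alpha_y \leqslant 2(t-1)$, so $4\alpha_x-16 \leqslant 2(t-1)$, which rearranges to $\alpha_x \leqslant (t+7)/2$. For Item (b) the argument is identical with adjusted constants: $\alpha_x \geqslant \alpha_y + 2 \geqslant 4$ confines $\alpha_y$ to $[2,\alpha_x-2]$, again symmetric about $\alpha_x/2$, giving $(\alpha_x-\alpha_y)\alpha_y \geqslant 2\alpha_x-4$; since $x \not\sim y$, part (2) of Lemma~\ref{lem:alphabound} gives $(\alpha_x-\alpha_y)\alpha_y \leqslant t$, whence $2\alpha_x-4 \leqslant t$ and $\alpha_x \leqslant (t+4)/2$.

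The argument is elementary, so I do not expect a genuine obstacle; the only point requiring care is checking that each constraint interval is symmetric about the axis $u = \alpha_x/2$, which is what guarantees that the endpoint value is actually the minimum of $g$ over that interval. It is worth noting that only the upper bounds from Lemma~\ref{lem:alphabound} are used — the lower bound $\alpha_x - 1 \leqslant (\alpha_x-\alpha_y)\alpha_y$ plays no role — and that no integrality (as in Corollary~\ref{cor:intparams}) is needed, since the parabola estimate is valid for all real $\alpha_y$ in the relevant interval.
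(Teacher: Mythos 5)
Your proof is correct and is exactly the argument the paper intends: the paper states only that the corollary ``follows easily from Lemma~\ref{lem:alphabound} by checking the minima of negative parabolas with bounded domains,'' and your endpoint evaluation of the downward parabola $(\alpha_x-u)u$ on the symmetric intervals $[4,\alpha_x-4]$ and $[2,\alpha_x-2]$, combined with the upper bounds $2(t-1)$ and $t$ from the lemma, is precisely that computation.
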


Let $\Gamma$ be a graph in $\mathcal H^\prime(s,t)$ and let $v$ be a vertex of $\Gamma$.
Recall that $\omega(\Gamma)$ denotes the squarefree part of $\alpha_v^2$.

\begin{corollary}\label{cor:psige3}
	Let $\Gamma$ be a graph in $\mathcal H^\prime(s,t)$ and let $x$ be a vertex with $d_x = \Delta(\Gamma)$ where $\omega(\Gamma) \geqslant 3$.
	Then $\alpha_x \leqslant (t+3)/\sqrt{3}$.
\end{corollary}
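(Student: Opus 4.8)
The plan is to follow the same template as the proof of Corollary~\ref{cor:tboundal}, combining the nonadjacent case of Lemma~\ref{lem:alphabound} with a parabola estimate, but to exploit the hypothesis $\omega(\Gamma)\geqslant 3$ in order to replace the integer step size used there by the larger step size $\sqrt{\omega}$. First I would produce a nonadjacent pair of vertices one of which has maximum valency. Exactly as in the proof of Corollary~\ref{cor:maxDeg}: let $M$ be the set of vertices of valency $\Delta(\Gamma)$; if every vertex of $M$ were adjacent to every vertex outside $M$, then the complement of $\Gamma$ would split $M$ off from $V(\Gamma)\setminus M$, contradicting Theorem~\ref{thm:disconn} since $\Gamma$ is not a cone. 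Hence some vertex $x$ with $d_x=\Delta(\Gamma)$ has a non-neighbour $y$ with $d_y<d_x$. Because all vertices of valency $\Delta(\Gamma)$ share the same value $\alpha_x=\sqrt{\Delta(\Gamma)+t}$, it suffices to bound $\alpha_x$ for this particular $x$.

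Next I would extract the arithmetic consequence of $\omega(\Gamma)\geqslant 3$. By Corollary~\ref{cor:intparams} we may write $\alpha_v=\beta_v\sqrt{\omega}$ with $\beta_v\in\mathbb{N}$ for every vertex $v$, so every $\alpha$-value is at least $\sqrt{\omega}\geqslant\sqrt{3}$, and two distinct $\alpha$-values differ by at least $\sqrt{\omega}\geqslant\sqrt{3}$. Applied to the pair $x\not\sim y$ with $\alpha_x>\alpha_y$, this yields $\alpha_y\geqslant\sqrt{3}$ and $\alpha_x-\alpha_y\geqslant\sqrt{3}$.

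Now for the parabola estimate. Since $\alpha_y$ and $\alpha_x-\alpha_y$ are both at least $\sqrt{3}$ while their sum $\alpha_x$ is fixed, the product $(\alpha_x-\alpha_y)\alpha_y$ — viewed as a downward parabola in $\alpha_y$ on the interval $[\sqrt{3},\alpha_x-\sqrt{3}]$ — attains its minimum at the endpoints, where its value is $\sqrt{3}(\alpha_x-\sqrt{3})$. On the other hand, Lemma~\ref{lem:alphabound}(2) gives $(\alpha_x-\alpha_y)\alpha_y\leqslant t$. Combining these bounds gives $\sqrt{3}(\alpha_x-\sqrt{3})\leqslant t$, which rearranges to $\alpha_x\leqslant t/\sqrt{3}+\sqrt{3}=(t+3)/\sqrt{3}$, as required.

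The computational steps are routine (indeed this is precisely the ``minima of negative parabolas'' remark preceding Corollary~\ref{cor:tboundal}), so I expect the only genuinely structural point — and hence the main obstacle — to be the first step: guaranteeing that the nonadjacent pair can be chosen with its larger member of \emph{globally} maximum valency. This is what forces the use of Theorem~\ref{thm:disconn} (non-coneness is equivalent to having a disconnected complement) rather than merely the existence of some nonadjacent pair of unequal valencies; without locating the pair at a maximum-valency vertex, the estimate would bound the wrong $\alpha$-value.
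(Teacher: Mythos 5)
Your proof is correct and follows essentially the same route as the paper's: Theorem~\ref{thm:disconn} supplies a non-neighbour $y$ of $x$ with $\sqrt{3}\leqslant\alpha_y\leqslant\alpha_x-\sqrt{3}$, and the nonadjacent case of Lemma~\ref{lem:alphabound} together with the endpoint minimum of the parabola gives $\sqrt{3}(\alpha_x-\sqrt{3})\leqslant t$. The only difference is that you spell out (via the cone/disconnected-complement argument) why the non-neighbour can be chosen with strictly smaller valency, a point the paper leaves implicit.
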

\begin{proof}
	By Theorem~\ref{thm:disconn}, there must be a vertex $y$ not adjacent to $x$.
	Furthermore, since $\omega(\Gamma) \geqslant 3$, we have $\sqrt{3} \leqslant \alpha_y \leqslant \alpha_x - \sqrt{3}$.
	Using Lemma~\ref{lem:alphabound}, we have $\sqrt{3}(\alpha_x-\sqrt{3}) \leqslant (\alpha_x-\alpha_y)\alpha_y \leqslant t$, and the lemma follows.
\end{proof}

The next result is a structural result about graphs with second largest eigenvalue at most $1$.
Note that this result does not depend on the fact that $\Gamma$ has three eigenvalues.

\begin{lemma}\label{lem:cells}
	Let $\Gamma$ be a graph in $\mathcal H(s,t)$ with $x \sim y$ adjacent vertices.
	Let $\pi$ be a vertex partition with cells $C_1 = \{x,y\}$, $C_2 = \{z \in V(\Gamma)\backslash C_1 \vl z \sim x \text{ or } z \sim y \}$, and $C_3 = \{z \in V(\Gamma) \vl z \not \sim x \text{ and } z \not \sim y \}$.
	Then the induced subgraph on $C_3$ has maximum degree~one.
\end{lemma}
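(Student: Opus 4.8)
The plan is to argue by contradiction using eigenvalue interlacing (see \cite[Chapter 9]{God01}). Since $\Gamma \in \mathcal H(s,t) \subseteq \mathcal G(s,1,-t)$, its second largest eigenvalue is exactly $1$, so \emph{every} induced subgraph $H$ of $\Gamma$ satisfies $\lambda_2(H) \leqslant 1$. Suppose, for contradiction, that the induced subgraph on $C_3$ has a vertex $w$ of degree at least two, and let $u,v \in C_3$ be two neighbours of $w$. Then $\{u,v,w\}$ induces a connected graph on three vertices (a path if $u \not\sim v$ and a triangle otherwise), and in either case its spectral radius is at least $\sqrt 2 > 1$. The goal is to exhibit an induced subgraph of $\Gamma$ whose second largest eigenvalue exceeds $1$.

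A first attempt is to pair $\{u,v,w\}$ with the edge $xy$: since no vertex of $C_3$ is adjacent to $x$ or $y$, the set $\{x,y,u,v,w\}$ induces $K_2 \sqcup P_3$ or $K_2 \sqcup K_3$. However, both of these have second largest eigenvalue \emph{equal} to $1$, so this alone yields no contradiction; the edge $xy$ contributes only the eigenvalue $1$, not something strictly larger. The remedy is to enlarge the $\{x,y\}$-side by one vertex. Because $\Gamma$ is connected and has more than two vertices, $\{x,y\}$ cannot be a component, so at least one of $x,y$---say $x$---has a further neighbour $p \notin \{x,y\}$; necessarily $p \in C_2$ and $p \notin \{u,v,w\}$. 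I would then work with the six-vertex induced subgraph $H := \Gamma[\{x,y,p,u,v,w\}]$, for which interlacing forces $\lambda_2(H) \leqslant 1$.

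The heart of the argument is to show instead that $\lambda_2(H) > 1$. When $p$ is adjacent to none of $u,v,w$, this is immediate: $H$ is the disjoint union of $\Gamma[\{x,y,p\}]$ and $\Gamma[\{u,v,w\}]$, each connected on three vertices and hence of spectral radius at least $\sqrt 2$; a disjoint union of two graphs, each of spectral radius greater than $1$, has at least two eigenvalues exceeding $1$, so $\lambda_2(H) > 1$. Equivalently, the vectors realising the top eigenvalue on each of the two pieces span a two-dimensional subspace on which $A(H) - I$ is positive definite.

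The remaining, and main, obstacle is the case in which $p$ is adjacent to at least one of $u,v,w$, so that $H$ is connected and the disjoint-union trick fails. Here $H$ carries the fixed edges $xy$, $xp$, $wu$, $wv$ together with some subset of the five optional edges $yp$, $uv$, $pu$, $pv$, $pw$, giving only finitely many graphs (up to the symmetry exchanging $u$ and $v$). The plan is to verify directly, by computing characteristic polynomials, that $\lambda_2(H) > 1$ in each case. I expect this verification to be the delicate step, since the margin is genuinely small: in the extremal configuration, where $p$ is adjacent to all of $u,v,w$, one finds $\lambda_2(H)$ only slightly above $1$, so the bound cannot be slackened and a clean monotonicity argument in the optional edges is unavailable. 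Once $\lambda_2(H) > 1$ is established in every case, it contradicts $\lambda_2(H) \leqslant 1$, and the lemma follows. Note that this argument uses nothing beyond connectivity and $\lambda_2(\Gamma) \leqslant 1$, consistent with the remark preceding the statement.
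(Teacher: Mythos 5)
Your strategy is genuinely different from the paper's and it is viable, but as written the proof is not complete: the decisive step is announced rather than carried out. The setup is fine --- the six non-edges from $\{x,y\}$ to $C_3$ are forced, Case 1 (where $p$ has no neighbour among $u,v,w$) is correctly disposed of by the disjoint-union/orthogonal-Rayleigh-quotient argument, and you are right that the five-vertex subgraphs $K_2\sqcup P_3$ and $K_2\sqcup K_3$ only give $\lambda_2=1$, so the sixth vertex is genuinely needed. But in Case 2 everything rests on the assertion that all $28$ graphs $H$ (equivalently $20$ up to swapping $u$ and $v$) satisfy $\lambda_2(H)>1$, and you explicitly defer that computation. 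Since the lemma \emph{is} that computation in your approach, the proposal as it stands has a gap. For the record, I checked the $20$ classes and the claim does hold, but your worry about the margin is justified: the minimum is attained not at the all-edges configuration but at $H=\Gamma[\{x,y,p,u,v,w\}]$ with edge set $\{xy,xp,pu,pv,pw,uw,vw\}$ (so $\{p,u,v,w\}$ induces $K_4$ minus the edge $uv$, with a pendant path $p$--$x$--$y$), whose characteristic polynomial is $\lambda(\lambda+1)(\lambda^2-\lambda-4)(\lambda^2-1)-\lambda^2(\lambda^2-2)$ and whose second eigenvalue is about $1.056$; the tree case $\{xy,xp,pw,wu,wv\}$ gives $\lambda_2=\sqrt{(5-\sqrt5)/2}\approx 1.176$, and several other cases land between $1.06$ and $1.15$. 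So the verification succeeds, but only by a hair, and each case needs its own characteristic polynomial.

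By contrast, the paper avoids all of this with one interlacing argument: starting from a vertex $z\in C_3$ of degree $d\geqslant 1$ in $C_3$, it forms the partition with cells $\{x,y\}$, $\{z\}\cup N_{C_3}(z)$, and the rest; because there are no edges between the first two cells, the quotient matrix has the block $\operatorname{diag}(1,\mathfrak d)$ in its upper-left corner, where $\mathfrak d\geqslant 1$ is the average degree of the subgraph induced on the second cell, and interlacing against $\lambda_2(\Gamma)=1$ forces $\mathfrak d=1$, hence $d=1$. That argument is uniform in the structure of the neighbourhood of $z$ and requires no enumeration, which is what your approach buys at the cost of the $28$-graph check. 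If you keep your route, you must actually exhibit the $20$ characteristic polynomials (or a verified computation) before the proof can be called complete.
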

\begin{proof}
	Suppose there is a vertex $z \in C_3$ having degree $d \geqslant 1$ in the induced subgraph on $C_3$.
	Define the vertex partition $\pi^\prime$ to consist of the cells $C_1^\prime = \{x,y\}$, $C_2^\prime = \{z\} \cup \{ w \in C_3 \vl w \sim z \}$, and $C_3^\prime = V(\Gamma)\backslash \{ C_1^\prime \cup C_2^\prime \}$.
	Let $\Gamma^\prime$ be the subgraph of $\Gamma$ induced on $C_2^\prime$.
	Set $\mathfrak d = \sum_{v \in V(\Gamma^\prime)} d^\prime_v/(d+1)$, where $d^\prime_v$ denotes the degree of the vertex $v$ in $\Gamma^\prime$.
	Note that $\mathfrak d \geqslant 1$ and equality implies that $d^\prime_v = 1$ for all $v \in V(\Gamma^\prime)$.
	
	Now $\pi^\prime$ has quotient matrix
	\[
		Q = \begin{pmatrix}
			1 & 0 & q_1 \\
			0 & \mathfrak d & q_2 \\
			q_5 & q_4 & q_3
		\end{pmatrix}.
	\]
	By interlacing the second largest eigenvalue $\theta$ of $Q$ is at least $1$.
	Moreover, since the eigenvalues of $Q$ interlace with those of $\Gamma$, we see that $\theta$ is bounded above by $1$.
	Hence $\theta = 1$, and furthermore $\mathfrak d=1$.
\end{proof}

Let $\Gamma$ be a graph in $\mathcal H(s,t)$.
Recall that $m$ denotes the multiplicity of the eigenvalue $-t$.

\begin{lemma}\label{lem:indep}
	Let $\Gamma$ be a graph in $\mathcal H(s,t)$.
	Then the independence number of $\Gamma$ is at most $m$.
\end{lemma}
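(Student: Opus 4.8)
The plan is to apply Cvetkovi\'c's inertia bound, which is a standard consequence of Cauchy interlacing. The crucial input is the inertia of $A$, which we can read off directly from the spectrum. Since $\Gamma \in \mathcal H(s,t)$ has eigenvalues $s$ (simple), $1$ with multiplicity $n-1-m$, and $-t$ with multiplicity $m$, and since $-t$ is genuinely negative (because $s$ and $1$ are positive while $\tr A = 0$ forces at least one negative eigenvalue, leaving $-t$ as the only candidate, so $t>0$), the matrix $A$ has exactly $n-m$ positive eigenvalues and $m$ negative eigenvalues.

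First I would take an independent set $S$ of $\Gamma$ and let $B$ denote the principal submatrix of $A$ indexed by the vertices of $S$. Because $S$ is independent, $B$ is the $|S|\times|S|$ zero matrix, so every eigenvalue of $B$ equals $0$. Writing the eigenvalues of $A$ as $\lambda_1 \geqslant \dots \geqslant \lambda_n$ and those of $B$ as $\mu_1 \geqslant \dots \geqslant \mu_{|S|}$, Cauchy interlacing gives $\mu_i \geqslant \lambda_{i+n-|S|}$ for each $i$. Taking $i=1$ and using $\mu_1 = 0$ yields $\lambda_{n-|S|+1} \leqslant 0$, so the $|S|$ eigenvalues $\lambda_{n-|S|+1},\dots,\lambda_n$ are all nonpositive. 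Hence at most $n-|S|$ eigenvalues of $A$ are positive, that is $n-m \leqslant n-|S|$, which rearranges to $|S| \leqslant m$. As $S$ was an arbitrary independent set, the independence number is at most $m$.

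There is no genuine obstacle here: once the inertia of $A$ is identified the conclusion is immediate. The only point requiring care is to invoke the correct half of the inertia bound, namely to bound $|S|$ by $n$ minus the number of \emph{positive} eigenvalues of $A$ (which equals $m$), rather than by $n$ minus the number of negative eigenvalues (which would only give the vacuous bound $|S| \leqslant n-m$). This is exactly why the computation $n_+ = n-m$ is the relevant one.
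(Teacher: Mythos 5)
Your argument is correct and is essentially the paper's own proof: the paper also observes that the induced subgraph on an independent set has all eigenvalues zero and invokes interlacing to conclude $|C|\leqslant m$, merely leaving implicit the inertia computation ($n-m$ positive eigenvalues, $m$ negative) that you spell out. No substantive difference.
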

\begin{proof}
	Let $C$ be an independent set in $\Gamma$.
	The eigenvalues of the subgraph induced on $C$ are all zero and they interlace with the eigenvalues of $\Gamma$.
	Hence we have $|C| \leqslant m$.
\end{proof}

\begin{lemma}\label{lem:boundVerts}
	Let $\Gamma$ be a graph in $\mathcal H(s,t)$ and let $x \sim y$ be adjacent vertices.
	Then $d_x + d_y - \nu_{x,y} \geqslant n - 2 m$.
	Moreover, if $d_x = \Delta(\Gamma)$ then
	\begin{align}
		n &\leqslant \frac{(t+1)(\alpha_x^2 - \alpha_x + 3t) + 2\alpha_x^2+2t-2}{t-1}; \label{eqn:nuppal} \\
		(\sqrt{2n}- (t+1))^2 &> (t-1)(t-2) - 2\alpha_x^2. \label{eqn:nlowal}
	\end{align}
\end{lemma}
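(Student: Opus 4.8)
The statement bundles three claims: the edge inequality $d_x+d_y-\nu_{x,y}\ge n-2m$, which holds for every adjacent pair, and the two displayed bounds, which apply once $d_x=\Delta(\Gamma)$; I would prove them in that order. For the edge inequality, the plan is to feed the edge $x\sim y$ into Lemma~\ref{lem:cells}, which produces the partition with cells $C_1=\{x,y\}$, $C_2$, and $C_3$ and guarantees that the subgraph induced on $C_3$ has maximum degree one. Inclusion--exclusion gives $|N(x)\cup N(y)|=d_x+d_y-\nu_{x,y}$; since this union contains $x$ and $y$ but no other vertex of $C_1$, we get $|C_2|=d_x+d_y-\nu_{x,y}-2$ and hence $|C_3|=n-(d_x+d_y-\nu_{x,y})$. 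A graph of maximum degree one is a disjoint union of, say, $a$ edges and $b$ isolated vertices, so it contains an independent set of size $a+b$ while $|C_3|=2a+b\le 2(a+b)$. By Lemma~\ref{lem:indep} every independent set has at most $m$ vertices, so $a+b\le m$ and $|C_3|\le 2m$, which rearranges to the edge inequality.

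For the two remaining bounds I would first record the algebraic data. By Equations~\eqref{eqn:alpha} and \eqref{eqn:commonneighbs} we have $d_v=\alpha_v^2+t$ and, along the edge $x\sim y$, $\nu_{x,y}=\alpha_x\alpha_y-(t-1)$, so that $d_x+d_y-\nu_{x,y}=\alpha_x^2+\alpha_y^2-\alpha_x\alpha_y+3t-1$; and since $\tr A=0$ the multiplicities satisfy $m(t+1)=s+n-1$ (Equation~\eqref{eqn:multeq}). Because $\Gamma$ is connected and nonregular, some maximum-valency vertex has a neighbour of strictly smaller valency, and as the bounds involve only $n$ and $\alpha_x$ (which is common to all maximum-valency vertices) I may assume $d_x=\Delta(\Gamma)$ and $d_y<\Delta(\Gamma)$. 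By Corollary~\ref{cor:intparams} each $\alpha_v=\beta_v\sqrt\omega$ with $\beta_v\in\mathbb N$ and $\omega\ge 1$, so $d_y<d_x$ forces $\alpha_y\le\alpha_x-\sqrt\omega\le\alpha_x-1$, while $\alpha_y\ge 1$.

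To obtain \eqref{eqn:nuppal} I would substitute $m=(s+n-1)/(t+1)$ into the edge inequality and clear denominators to get $n(t-1)\le (t+1)(d_x+d_y-\nu_{x,y})+2s-2$. The factorisation $\alpha_y^2-\alpha_x\alpha_y+\alpha_x-1=(\alpha_y-1)(\alpha_y+1-\alpha_x)$ is nonpositive because $\alpha_y\ge 1$ and $\alpha_y\le\alpha_x-1$, so $d_x+d_y-\nu_{x,y}\le\alpha_x^2-\alpha_x+3t$; together with $s<\Delta(\Gamma)=\alpha_x^2+t$, i.e.\ $2s-2<2\alpha_x^2+2t-2$, this yields \eqref{eqn:nuppal}. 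For \eqref{eqn:nlowal} I would instead count multiplicities: the eigenvalue $1$ has multiplicity $n-1-m=n-(m+1)$, and since $1\notin\{0,-1\}$, Theorem~\ref{thm:bell} gives $2n\le(m+1)(m+2)$, hence $m\ge(-3+\sqrt{1+8n})/2$. The relation $m(t+1)=s+n-1$ together with $s<\alpha_x^2+t$ gives the opposing bound $m\le(n+\alpha_x^2+t-2)/(t+1)$. Chaining these and clearing denominators yields $(t+1)\sqrt{1+8n}\le 2n+2\alpha_x^2+5t-1$, and the elementary estimate $\sqrt{1+8n}>\sqrt{8n}=2\sqrt{2n}$ then produces $2(t+1)\sqrt{2n}<2n+2\alpha_x^2+5t-1$, which is exactly \eqref{eqn:nlowal} after completing the square in $\sqrt{2n}$.

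The main obstacle is the passage from $d_y<d_x$ to the strict gap $\alpha_y\le\alpha_x-1$: it is this discreteness---available only through the integrality in Corollary~\ref{cor:intparams} and the existence of a smaller-valency neighbour forced by connectivity and nonregularity---that sharpens the crude estimate $d_x+d_y-\nu_{x,y}\le\alpha_x^2+3t-1$ to the $-\alpha_x$ term needed in \eqref{eqn:nuppal}. The other delicate point is bookkeeping for \eqref{eqn:nlowal}: one must pair the Bell lower bound on $m$ with the upper bound coming from $s<\Delta(\Gamma)$ (rather than with any of the several other lower bounds on $m$ at hand) and spot the clean inequality $\sqrt{1+8n}>2\sqrt{2n}$ that converts the $\sqrt{1+8n}$ of the multiplicity count into the stated $\sqrt{2n}$.
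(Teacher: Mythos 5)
Your proof is correct and follows essentially the same route as the paper's: Lemma~\ref{lem:cells} plus the independence-number bound (Lemma~\ref{lem:indep}) for the first inequality, then Equation~\eqref{eqn:multeq} with $s<\Delta(\Gamma)$ for \eqref{eqn:nuppal} and Theorem~\ref{thm:bell} for \eqref{eqn:nlowal}. Your factorisation $(\alpha_y-1)(\alpha_y+1-\alpha_x)\leqslant 0$ is just an inline re-derivation of the inequality $(\alpha_x-\alpha_y)\alpha_y\geqslant\alpha_x-1$ that the paper quotes from Lemma~\ref{lem:alphabound}, and your bookkeeping in the final step (using the integrality of $s$ and $\sqrt{1+8n}>2\sqrt{2n}$) is if anything slightly more careful than the paper's.
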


\begin{proof}
	Let $\pi$ be a vertex partition with cells $C_1 = \{x,y\}$, $C_2 = \{z \in V(\Gamma)\backslash C_1 \vl z \sim x \text{ or } z \sim y \}$, and $C_3 = \{z \in V(\Gamma) \vl z \not \sim x \text{ and } z \not \sim y \}$.
	By Lemma~\ref{lem:cells}, the subgraph induced on $C_3$ consists of isolated vertices and disjoint edges.
	Hence, by Lemma~\ref{lem:boundVerts}, $|C_3| \leqslant 2 m$.
	On the other hand $|C_3| = n - (d_x + d_y - \nu_{x,y})$.
	Thus, $n - 2m \leqslant d_x + d_y - \nu_{x,y} = \alpha_x^2+3t-1-\alpha_y(\alpha_x - \alpha_y)$.
	By Lemma~\ref{lem:alphabound} we have $n - 2m \leqslant \alpha_x^2+3t-1-(\alpha_x - 1)$.
	Furthermore, if $d_x = \Delta(\Gamma)$ then, using Equation~\eqref{eqn:multeq} and Lemma~\ref{lem:rhobound}, we can write $m = (n-1+s)/(t+1) \leqslant (n-1+d_x)/(t+1) = (n-1+\alpha_x^2+t)/(t+1)$.
	The first two inequalities follow.
	
	Now we show the third inequality.
	By Theorem~\ref{thm:bell} we have $m > \sqrt{2n} - 3/2$.
	Since the sum of the eigenvalues of $\Gamma$ equal $0$ we have $m(t+1) = n-1+s$.
	Hence $(t+1)(\sqrt{2n} - 3/2) < n-1+s$ and again, if $d_x = \Delta(\Gamma)$ then, by Lemma~\ref{lem:rhobound}, we have $n-1+s \leqslant n-1+\alpha_x^2 + t$.
	Therefore $(t+1)(\sqrt{2n} - 3/2) < n-1+\alpha_x^2 + t$.
\end{proof}

We remark that if $(t-1)(t-2) - 2\alpha_x^2$ is nonnegative then we have the following lower bound for $n$.
\[
	n > \frac{\left ( t + 1 + \sqrt{(t-1)(t-2) -2\alpha_x^2} \right )^2}{2}.
\]
Using our general upper bound for $\alpha_x$, note the following upper and lower bounds for $n$ in terms of $t$.

\begin{corollary}\label{cor:nbound}
	Let $\Gamma$ be a graph in $\mathcal H^\prime(s,t)$.
	Then 
	$$\frac{1}{2}(t-\frac{1}{2})^2 < n < t^2 + 8t + 18 + 18/(t-1).$$
\end{corollary}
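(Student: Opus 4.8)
The plan is to apply Lemma~\ref{lem:boundVerts} to a vertex $x$ of maximum degree $d_x = \Delta(\Gamma)$, together with a neighbour $y$ of $x$ (which exists since $\Gamma$ is connected), and then to remove the dependence on $\alpha_x$ in the two resulting estimates \eqref{eqn:nuppal} and \eqref{eqn:nlowal} using the uniform bound $\alpha_x \leqslant t+1$ from Corollary~\ref{cor:maxDeg}. I would also record two elementary facts: that $t \geqslant 2$ for graphs in $\mathcal H'(s,t)$ (the case $t=1$ forces a complete graph, which has only two eigenvalues), and that for the chosen vertex $\alpha_x^2 = d_x - t = \Delta(\Gamma) - t$.

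For the upper bound I would begin from \eqref{eqn:nuppal}. Regarding its right-hand side as a function of $\alpha_x$, one checks that it is increasing for $\alpha_x \geqslant 1$ (the derivative in $\alpha_x$ has numerator $(2t+6)\alpha_x-(t+1)>0$), so substituting the maximal value $\alpha_x = t+1$ yields the largest bound. A short computation then expands the numerator to $t^3+7t^2+10t$, and dividing by $t-1$ gives exactly $t^2+8t+18+18/(t-1)$. Strictness of the inequality comes from the fact that the derivation of \eqref{eqn:nuppal} only uses $s\leqslant\Delta(\Gamma)$, whereas Lemma~\ref{lem:rhobound} gives $s<\Delta(\Gamma)$; hence $n < t^2+8t+18+18/(t-1)$.

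The lower bound is the more delicate half, and is where I expect the main obstacle. On its own \eqref{eqn:nlowal} only bounds the square $(\sqrt{2n}-(t+1))^2$ from below, so it is consistent with $n$ being small; the key idea is to combine it with the trivial vertex count $\Delta(\Gamma) \leqslant n-1$, that is $\alpha_x^2 \leqslant n-1-t$. Substituting this into \eqref{eqn:nlowal} and writing $u=\sqrt{2n}$ converts the estimate into the quadratic inequality
\[
	u^2 - (t+1)u + \tfrac{3}{2}(t-1) > 0,
\]
whose roots are $u_\pm = \tfrac{1}{2}\bigl((t+1)\pm\sqrt{t^2-4t+7}\bigr)$. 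Since $\sqrt{t^2-4t+7}>t-2$ for $t\geqslant 2$, one gets $u_- < \tfrac{3}{2}$ and $u_+ > t-\tfrac{1}{2}$.

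To finish I would eliminate the spurious branch $u<u_-$: as $n\geqslant 2$ we have $u=\sqrt{2n}\geqslant 2 > \tfrac{3}{2} > u_-$, so the quadratic inequality forces $u>u_+>t-\tfrac{1}{2}$, whence $n = u^2/2 > \tfrac{1}{2}(t-\tfrac{1}{2})^2$. The crux of the whole argument is therefore this last pairing: the one-sided spectral inequality \eqref{eqn:nlowal} alone does not bound $n$ below, and it is precisely the elementary bound $\Delta(\Gamma)\leqslant n-1$ that selects the correct branch of the quadratic and turns \eqref{eqn:nlowal} into a genuine lower estimate for $n$.
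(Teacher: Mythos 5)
Your proof is correct and follows essentially the same route as the paper: the upper bound by substituting $\alpha_x \leqslant t+1$ into \eqref{eqn:nuppal}, and the lower bound by pairing the Bell--Rowlinson-derived inequality with a trivial degree bound to get the quadratic $u^2-(t+1)u+\tfrac{3}{2}(t-1)>0$ in $u=\sqrt{2n}$ and then discarding the small root via $n\geqslant 2$. The only cosmetic difference is that the paper uses $d_x\leqslant n-2$ (from $\Gamma$ not being a cone) in the intermediate inequality $(t+1)(\sqrt{2n}-3/2)<n-1+d_x$, whereas you use $d_x\leqslant n-1$ directly in \eqref{eqn:nlowal}; both yield the identical quadratic and conclusion.
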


\begin{proof}
	Let $x$ be a vertex with $d_x = \Delta(\Gamma)$.
	Then by Corollary~\ref{cor:maxDeg}, we have $\alpha_x \leqslant t+1$.
	Using this with Lemma~\ref{lem:boundVerts} gives the upper bound.
	
	For the lower bound we follow the proof of Lemma~\ref{lem:boundVerts} to obtain
	$(t+1)(\sqrt{2n} - 3/2) < n-1+d_x$.
	Since $\Gamma$ is not a cone, $d_x \leqslant n-2$.
	Whence (since we can assume $n \geqslant 2$), we have $n > \frac{1}{2}(t-\frac{1}{2})^2$.
\end{proof}

By comparing the upper and lower in Lemma~\ref{lem:boundVerts}, one can obtain the following bounds for $t$.

\begin{corollary}\label{cor:tbound}
	Let $\Gamma$ be a graph in $\mathcal H^\prime(s,t)$ and let $x$ be a vertex with $d_x = \Delta(\Gamma)$.
	\begin{enumerate}[(a)]
		\item If $\alpha_x \leqslant (t+4)/2$ then $t \leqslant 15$.
		\item If $\alpha_x \leqslant (t+18)/3$ then $t \leqslant 19$.
		\item If $\alpha_x \leqslant (t+3)/\sqrt{3}$ then $t \leqslant 21$.
		\item If $\alpha_x \leqslant (t+7)/2$ then $t \leqslant 22$.
		\item If $\alpha_x \leqslant (t+10)/2$ then $t \leqslant 29$.
	\end{enumerate}
\end{corollary}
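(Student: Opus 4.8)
The plan is to prove all five implications by a single uniform argument, feeding the hypothesised upper bound on $\alpha_x$ (in each case a fixed linear function of $t$, which I will call $g(t)$) into the two bounds on $n$ supplied by Lemma~\ref{lem:boundVerts}, and then showing that these two estimates become contradictory as soon as $t$ exceeds the asserted constant. Thus in each case I would argue by contradiction: assume $\alpha_x \leqslant g(t)$ together with $t$ strictly larger than the claimed bound, and derive $L(t) < U(t) \leqslant n \leqslant \dots$ impossible.

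First I would record the monotonicity needed to discharge $\alpha_x$. The right-hand side of \eqref{eqn:nuppal} is increasing in $\alpha_x$ on the range $\alpha_x \geqslant 1$, since the derivative of its numerator is $(t+1)(2\alpha_x-1)+4\alpha_x>0$; hence substituting $\alpha_x \leqslant g(t)$ gives an upper bound $n \leqslant U(t)$ depending on $t$ alone, where $U(t)$ is an explicit cubic in $t$ divided by $t-1$. For the lower bound I would invoke the reformulation stated in the remark after Lemma~\ref{lem:boundVerts}: whenever $(t-1)(t-2)-2\alpha_x^2 \geqslant 0$ we have $n > \tfrac12\bigl(t+1+\sqrt{(t-1)(t-2)-2\alpha_x^2}\bigr)^2$. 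This expression is decreasing in $\alpha_x$, and the key observation is that $(t-1)(t-2)-2g(t)^2 \geqslant 0$ already forces the radicand for the true $\alpha_x$ to be nonnegative (because $\alpha_x \leqslant g(t)$ implies $2\alpha_x^2 \leqslant 2g(t)^2$). Consequently the lower bound applies and can itself be weakened to $n > L(t) := \tfrac12\bigl(t+1+\sqrt{(t-1)(t-2)-2g(t)^2}\bigr)^2$, again a function of $t$ only.

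Combining the two reductions, any graph satisfying the hypothesis of a given case must satisfy the one-variable inequality $L(t) < U(t)$. The remaining task is to check that $L(t) \geqslant U(t)$ for every integer $t$ strictly above the asserted threshold, which produces the contradiction and forces $t$ to lie at or below that threshold. After isolating the single square root in $L(t)$ and squaring, this comparison reduces to a polynomial inequality in $t$ that can be verified exactly; the asserted constant is precisely the largest integer at which the radicand $(t-1)(t-2)-2g(t)^2$ is still small enough (or negative) for $L(t) < U(t)$ to persist.

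I expect the difficulty to be bookkeeping rather than conceptual. Two points require care. First, for the larger thresholds the quantity $(t-1)(t-2)-2g(t)^2$ becomes negative below the threshold, so that the lower bound goes vacuous there; one must confirm that this transition occurs at or below the claimed bound, so that the contradiction is still available at the first integer above it (for instance in case~(e) the radicand is negative at $t=29$ but positive at $t=30$, and the contradiction is extracted at $t=30$). Second, in the tighter cases the two sides of $L(t)<U(t)$ differ only slightly near the critical value of $t$ — case~(c) is the extreme, with $L(t)$ and $U(t)$ nearly equal around $t=22$ — so the final comparison must be carried out with exact arithmetic rather than decimal approximations.
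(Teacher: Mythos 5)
Your proposal is correct and is essentially the paper's own (one-line) proof: the paper likewise just compares the upper bound \eqref{eqn:nuppal} with the lower bound from the remark following Lemma~\ref{lem:boundVerts} after substituting the hypothesised bound on $\alpha_x$. Your additional care about monotonicity in $\alpha_x$, the nonnegativity of the radicand, and exact arithmetic near the critical values (and I spot-checked the thresholds numerically, e.g.\ $t=16$ in case~(a), $t=22$ in case~(c), $t=30$ in case~(e)) fills in exactly the bookkeeping the paper leaves implicit.
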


Next we establish a lower bound for the minimum degree of a graph in $\mathcal H^\prime(s,t)$ in terms of $t$.
By Corollary~\ref{cor:intparams}, we know that the minimum degree is at least $t+1$.
In the next two results we improve this lower bound when $t\geqslant 7$ and $t\geqslant 11$.

\begin{theorem}\label{thm:kmin}
	Let $\Gamma$ be a graph in $\mathcal H^\prime(s,t)$ with $t \geqslant 7$.
	Then $\delta(\Gamma) \geqslant t+2$.
\end{theorem}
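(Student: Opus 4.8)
The plan is to argue by contradiction. Since Corollary~\ref{cor:intparams} already gives $\delta(\Gamma) \geqslant t+1$, it suffices to rule out a vertex $v$ of valency exactly $t+1$. Such a vertex has $\alpha_v^2 = d_v - t = 1$, and because $\alpha_v = \beta_v\sqrt{\omega(\Gamma)}$ with $\beta_v \in \mathbb N$, this forces $\omega(\Gamma)=1$; in particular every entry of $\alpha$ is an integer in $[1,t+1]$ by Corollary~\ref{cor:maxDeg}. First I would extract the local structure around $v$. For each neighbour $w$ of $v$, Equation~\eqref{eqn:commonneighbs} gives $\nu_{v,w} = (1-t) + \alpha_w$, and $\nu_{v,w}\geqslant 0$ forces $\alpha_w \geqslant t-1$; with $\alpha_w \leqslant t+1$ this leaves $\alpha_w \in \{t-1,t,t+1\}$. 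Moreover the common neighbours of $v$ and $w$ are precisely the neighbours of $w$ inside $N := N(v)$, so $w$ has exactly $\alpha_w - (t-1)\in\{0,1,2\}$ neighbours in $N$; hence the subgraph induced on $N$ has maximum degree two.

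Next I would exploit the Perron equation $\sum_{w\sim v}\alpha_w = s\alpha_v = s$. As $v$ has $t+1$ neighbours, each with $\alpha_w\geqslant t-1$, this yields $s\geqslant t^2-1$, and since $s < \Delta(\Gamma)=\alpha_{\max}^2+t$ by Lemma~\ref{lem:rhobound}, the largest entry satisfies $\alpha_{\max}\geqslant t$, so $\alpha_{\max}\in\{t,t+1\}$. The key rigidity I would isolate is the following: if $p\sim q$ with $(\alpha_p,\alpha_q)=(t+1,t-1)$ then $\nu_{p,q}=d_q-1$, which forces $N(q)\setminus\{p\}\subseteq N(p)$. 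Applying this with $q$ a neighbour of $v$ having $\alpha_q=t-1$ would put $v\in N(p)$; but such a $q$ has no neighbour inside $N$, so $p\in N$ is impossible, whence $q$ has \emph{no} neighbour of $\alpha$-value $t+1$. Feeding this back into the Perron equation at $q$ bounds $s$ above by $t^2+1$, which pins the edge structure of $N$ very tightly (essentially all neighbours of $v$ share the value $t-1$, with at most one edge inside $N$), the complementary situation being that all neighbours of $v$ have $\alpha$-value in $\{t,t+1\}$.

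To close the argument I would combine the global identity $\sum_x \alpha_x^2 = (s-1)(s+t)$ with the edge count $\sum_{u\not\sim v}\alpha_u = \sum_{w\in N}\bigl(d_w - 1 - \deg_N(w)\bigr)$ between $N$ and the non-neighbours of $v$. Together with the restriction—coming from \eqref{eqn:commonneighbs} applied to the maximum-degree vertex—that the $\alpha$-values are confined away from the middle of $[1,t+1]$, these force the number of non-neighbours of $v$, and hence $n$, into a narrow window that contradicts the bounds of Corollary~\ref{cor:nbound} (equivalently Lemma~\ref{lem:boundVerts}). The main obstacle, and where I expect the real work to lie, is making this counting quantitative enough: the naive lower and upper estimates for $n$ both sit near $t^2$ and differ only by a term of order $t$, so the exclusion of intermediate $\alpha$-values must be used with care. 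This is precisely where the hypothesis $t\geqslant 7$ becomes essential, since the middle-value exclusions degenerate at $t=7$, and that boundary case will likely require separate, more delicate bookkeeping.
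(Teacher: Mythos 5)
Your setup is sound and matches the paper's: assuming a vertex $v$ of valency $t+1$ forces $\alpha_v=1$, hence $\omega(\Gamma)=1$ and integral $\alpha$-values, and the nonnegativity of $\nu_{v,w}$ confines neighbours of $v$ to $\alpha_w\in\{t-1,t,t+1\}$ with $\deg_{N(v)}(w)=\alpha_w-(t-1)$. Your ``rigidity'' observation is also correct and, combined with Lemma~\ref{lem:alphabound} (which forces any $p$ with $\alpha_p=t+1$ to be \emph{adjacent} to any $q$ with $\alpha_q=t-1$), actually yields the stronger conclusion that if $v$ has a neighbour of value $t-1$ then no vertex of the graph has $\alpha$-value $t+1$; this is essentially the mechanism behind the paper's Claim~1. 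The identity $\sum_{u\not\sim v}\alpha_u=\sum_{w\in N}(d_w-1-\deg_N(w))$ and $\|\alpha\|^2=(s-1)(s+t)$ are both correct.

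The genuine gap is in the closing step. First, the ``complementary situation'' in which every neighbour of $v$ has $\alpha$-value in $\{t,t+1\}$ is never addressed; in the paper this is the bulk of Claim~1 (Cases 1.1 and 1.2) and all of Claim~2, each requiring its own structural contradiction (including a quotient-matrix computation on the equitable valency partition). Second, the proposed finale --- forcing $n$ into a window that contradicts Corollary~\ref{cor:nbound} --- does not close quantitatively. In the all-$(t-1)$ case one gets $s=t^2-1$, $\sum_{u\not\sim v}\alpha_u=(t+1)(t^2-t)=t^3-t$ and $\|\alpha\|^2=(t^2-2)(t^2+t-1)\approx t^4$; these are perfectly consistent with $n$ anywhere in the range $\tfrac12(t-\tfrac12)^2<n<t^2+8t+O(1)$ unless one already knows the $\alpha$-values of the non-neighbours are confined to $\{1,t-1\}$, and that confinement does not follow from Equation~\eqref{eqn:commonneighbs} applied to the maximum-degree vertex alone --- the paper derives it only for vertices in the cell $C_3$ of the partition $\{x,y\},C_2,C_3$, via the inclusion--exclusion bound $d_w\geqslant\nu_{w,x}+\nu_{w,y}-\nu_{x,y}$ together with Lemma~\ref{lem:cells}. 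The paper then finishes not by contradicting Corollary~\ref{cor:nbound} but by counting edges between $C_2\cap N(x)$ and $C_3$ to pin down the sizes $a,b$ and exhibiting a vertex $z$ with $\alpha_z\geqslant t$ whose neighbours in $C_3$ violate Claims~1 and~2. You correctly identify this counting as ``where the real work lies,'' but that work is precisely the proof, and as sketched your global estimates are too coarse to supply it.
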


\begin{proof}
	Suppose that $\delta(\Gamma) = t+1$ and that $x$ is a vertex of $\Gamma$ that has valency $t+1$.
	Since $\Gamma$ is not a cone, we can apply Lemma~\ref{lem:alphabound} to find that the maximum degree $\Delta(\Gamma) \leqslant t^2+3t+1$.
	In terms of the Perron-Frobenius eigenvector $\alpha$, for any vertex $v$, we have $1 \leqslant \alpha_v \leqslant t+1$.
	Moreover, since the pairwise products of $\alpha_i$s are integers and $\alpha_x = 1$, we have that $\alpha_v$ is an integer for all vertices $v$.
	
	Let $w$ be a neighbour of $x$.
	The number of common neighbours $\nu_{w,x} = 1-t+\alpha_w\alpha_x$ is at least zero and hence $\alpha_w \geqslant t-1$.
	Therefore, for all $w \in N_\Gamma(x)$, we have that $\alpha_w \in \{t-1, t, t+1 \}$.
	
	\paragraph{\textbf{Claim 1}} There is no vertex $w \in N_\Gamma(x)$ with $\alpha_w=t+1$. 
	\label{par:claim1}
	
	For a contradiction, assume that $y$ is a neighbour of $x$ with $\alpha_y=t+1$.
	If a vertex $w \in N_\Gamma(x)$ had $\alpha_w = t-1$ then $\nu_{x,w} = 0$.
	But according to Lemma~\ref{lem:alphabound}, $w$ is adjacent to $y$, which is a contradiction.
	Hence, for all $w \in N_\Gamma(x)$, we have $\alpha_w \in \{t, t+1 \}$.
	
	Next suppose that for all $w \in N_\Gamma(x)$ we have $\alpha_w = t+1$.
	Since $\Gamma$ is not biregular, there must be a vertex $z$ (say) in $\Gamma$ such that $2 \leqslant \alpha_z \leqslant t$.
	Now, $z \not \sim x$ and $\nu_{x,z} = \alpha_z \leqslant t$.
	Therefore, there must be some vertex $w \in N_\Gamma(x)$ that is not adjacent to $z$.
	By Lemma~\ref{lem:alphabound}, we have $(\alpha_w-\alpha_z)\alpha_z \leqslant t$, and hence $\alpha_z=t$.
	Thus, $z$ is adjacent to every neighbour of $x$ except for $w$.
	On the other hand, $\nu_{w,z} = t^2 + t = d_z$, which means that $w$ is also adjacent to every other vertex in $N_\Gamma(x)$.
	This gives a contradiction, since $\nu_{w,x} = 2$.
	Therefore at least one neighbour $v$ (say) of $x$ has $\alpha_v = t$.

	The number of common neighbours of $x$ and $v$ is $\nu_{v,x} = 1$.
	Let $z$ be the common neighbour of $x$ and $v$.
	\paragraph{\textbf{Case 1.1}} $\alpha_z = t$. 
	\label{par:_textbf_case_1}
	
	In this case $v \not \sim y$ and so $\nu_{v,y} = t^2 + t = d_v$, which means that $y$ is adjacent to $z$.
	But this is impossible since $\nu_{x,z} = 1$.
	
	
	\paragraph{\textbf{Case 1.2}} $\alpha_z = t+1$. 
	\label{par:_textbf_case_2}
	
	Note that the only common neighbour of $v$ and $x$ is $z$ and the only common neighbours of $x$ and $z$ are $v$ and $y$.
	Let $w$ be another neighbour of $x$.
	We cannot have $\alpha_w = t+1$ since in this case $\nu_{v,w} = t^2 + t = d_v$ which means $w$ would be adjacent to $z$.
	Therefore the remaining neighbours $w$ of $x$ have $\alpha_w = t$, and hence each of these neighbours $w$ has precisely one common neighbour $w^\prime$ (say) with $x$.
	Either $w^\prime$ is $y$ or $w^\prime$ is adjacent to $y$.
	Since $t \geqslant 7$, the vertex $x$ has at least $8$ neighbours including $v$, $y$, and $z$.
	Therefore $x$ has at least $5$ other neighbours $w$ with $\alpha_w = t$, but the number of common neighbours of $x$ and $y$ is $2$.
	This gives a contradiction and completes the proof of Claim 1. 
	
	
	\paragraph{\textbf{Claim 2}} There is no vertex $w \in N_\Gamma(x)$ with $\alpha_w=t$. 
	\label{par:claim2}
	
	By Claim~1, for all $w \in N_\Gamma(x)$, we have that $\alpha_w \in \{t-1, t \}$.
	Suppose for a contradiction, there is a vertex $y \in N_\Gamma(x)$ with $\alpha_y = t$.
	Let $\pi$ be the partition of the vertices of $\Gamma$ with cells $C_1 = \{x,y\}$, $C_2 = \{ w \in V(\Gamma)\backslash C_1 \vl w \sim x \text{ or } w \sim y \}$, and $C_3 = \{ w \in V(\Gamma) \vl w \not \sim x \text{ and } w \not \sim y \}$.
	Let $w \in C_3$ then $\alpha_w \leqslant t$ otherwise $w$ would be adjacent to $y$.
	Moreover, we have $d_w = \alpha_w^2+t \geqslant \nu_{w,x} + \nu_{w,y} - \nu_{x,y}$ from which it follows that $\alpha_w = 1$ or $t$.
	Let $z \in C_2\backslash N_\Gamma(x)$.
	Similarly we have $d_z = \alpha_z^2 + t \geqslant \nu_{x,z} + \nu_{y,z} - \nu_{x,y} = \alpha_z(t+1) - t$. 
	Hence, since $t \geqslant 7$, we have $\alpha_z \in \{1, 2, t-1, t, t+1 \}$.
	
	\paragraph{\textbf{Case 2.1}} $\alpha_z = 2$. 
	\label{par:_textbf_case_11}
	
	Since $t \geqslant 7$, by Lemma~\ref{lem:alphabound}, $z$ is adjacent to every vertex $v$ with $\alpha_v = t-1$ or $t$.
	In particular, $z$ is adjacent to every neighbour of $x$ and hence $z$ has $t+1$ common neighbours with $x$.
	But $\nu_{x,z} = 2$, which is a contradiction.
	
	\paragraph{\textbf{Case 2.2}} $\alpha_z = t+1$. 
	\label{par:_textbf_case_22}
	
	The number of neighbours of $z$ in $C_3$ is $d_z - \nu_{x,z} - \nu_{y,z} + \nu_{x,y} = 2t$.
	Let $a$ and $b$ denote the number of vertices $v \in C_3$ having $\alpha_v = 1$ and $t$ respectively.
	Set $S = C_2 \cap N_\Gamma(x)$.
	By counting the common neighbours with $x$ we see that the number of edges between $S$ and $C_3$ is equal to $a + tb$.
	For each vertex $w \in S$, the number of neighbours in $C_3$ is at least $d_w - \nu_{w,x} - \nu_{w,y}$.
	The vertex $y$ has precisely one neighbour in $S$, hence we find that $a+tb \leqslant 2t-2 + (t-1)(t-1) = t^2 - 1$.
	Therefore $b \leqslant t-1$ and consequently $z$ must have a neighbour $v \in C_3$ with $\alpha_v = 1$.
	But this violates Claim~1.
	
	Thus we know that all vertices $v$ of $\Gamma$ have $\alpha_v \in \{1,t-1,t\}$.
	By Lemma~\ref{lem:equitable}, the valency vertex-partition of $\Gamma$ is equitable.
	Let $Q$ be the quotient matrix given as
	\[
		Q = \begin{pmatrix}
			0 & k_{12} & k_{13} \\
			k_{21} & k_{22} & k_{23} \\
			k_{31} & k_{32} & k_{33}
		\end{pmatrix}.
	\]  
	Note that since the number of common neighbours between any two vertices is nonnegative, no pair of vertices $v$ and $w$ can be adjacent if $\alpha_v = \alpha_w = 1$.
	Since the vector $(1, t-1, t)^\perp$ is an eigenvector of $Q$ for the eigenvalue $s$ we have 
	\begin{align*}
		k_{12}(t-1) + k_{13}t &= s; \\
		k_{12} + k_{13} &= 1 + t.
	\end{align*}
	Hence $s = t^2 - 1 + k_{13}$.
	Furthermore we have
	\begin{align}
		k_{21} + k_{22}(t-1) + k_{23}t &= s(t-1); \label{eqn:secl1} \\
		k_{21} + k_{22} + k_{23} &= t^2 - t + 1. \label{eqn:secl2}
	\end{align}
	Therefore, $s \leqslant t(t^2-t+1)/(t-1) < t^2 + 2$.
	Since $k_{13} \geqslant 2$, we must have $k_{13} = 2$ and $s = t^2+1$.
	By Equations \eqref{eqn:secl1} and \eqref{eqn:secl2}, we find that $k_{21}(t-1) + k_{22} = 1$.
	Hence $k_{21} = 0$, which is impossible.
	
	By Claim~1 and Claim~2, for all $w \in N_\Gamma(x)$, we have that $\alpha_w = t-1$.
	Note that the number of triangles containing $x$ is $0$ and hence, by Equation~\eqref{eqn:triangles}, we have $s = t^2 - 1$.
	Let $y \in N_\Gamma(x)$ and, as before, let $\pi$ be the partition of the vertices of $\Gamma$ with cells $C_1 = \{x,y\}$, $C_2 = \{ w \in V(\Gamma)\backslash C_1 \vl w \sim x \text{ or } w \sim y \}$, and $C_3 = \{ w \in V(\Gamma) \vl w \not \sim x \text{ and } w \not \sim y \}$.
	Let $w \in C_3$ then $\alpha_w \leqslant t-1$ otherwise $w$ would be have more than one neighbour in $C_3$ which is impossible by Lemma~\ref{lem:cells}.
	We have $d_w = \alpha_w^2+t \geqslant \nu_{w,x} + \nu_{w,y} - \nu_{x,y}$ from which it follows that $\alpha_w = 1$ or $t-1$.
	Moreover, by checking common neighbours with $x$ and $y$, we find that the subgraph induced on $C_3$ is $1$-regular.
	Similar to above, let $a$ and $b$ denote the number of vertices $v \in C_3$ having $\alpha_v = 1$ and $t-1$ respectively.
	Since no two vertices $v$ and $w$ with $\alpha_v=\alpha_w=1$ can be adjacent we must have $b \geqslant a$.
	In the same way as above, we count the edges between $C_3$ and $C_2 \cap N_\Gamma(x)$ to find that $a+(t-1)b = t^2$.
	Hence either $a = 1$ and $b = t+1$ or $a=b=t$, which correspond to $C_3$ consisting $t+2$ or $2t$ vertices respectively.
	On the other hand, since the maximum degree of $\Gamma$ is greater than $s$, there must exist a vertex $z \in C_2$ with $\alpha_z \geqslant t$.
	Thus the number of neighbours of $z$ in $C_3$ is $d_z - \nu_{x,z} - \nu_{y,z} \geqslant 2t - 1$.
	This means we must have $a=b=t$ and $z$ is adjacent to at least one vertex $v$ with $\alpha_v=1$.
	But this contradicts Claim~1 or Claim~2 applied to $v$.
\end{proof}

Using techniques similar to the ones used in the previous proof, we improve the lower bound on the minimum valency for $t \geqslant 11$.

\begin{theorem}\label{thm:kmin2}
	Let $\Gamma$ be a graph in $\mathcal H^\prime(s,t)$ with $t \geqslant 11$.
	Then $\delta(\Gamma) \geqslant t+3$.
\end{theorem}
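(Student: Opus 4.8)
The plan is to mirror the structure of the proof of Theorem~\ref{thm:kmin}, but starting from the assumption that $\delta(\Gamma) = t+2$ (having already ruled out $\delta(\Gamma) = t+1$ via Theorem~\ref{thm:kmin}, which applies since $t \geqslant 11 > 7$) and deriving a contradiction. Fix a vertex $x$ with $d_x = t+2$, so that $\alpha_x = \sqrt{2}$ and hence $\omega(\Gamma) = 2$; in particular every $\alpha_v$ is an integer multiple of $\sqrt 2$, say $\alpha_v = \beta_v\sqrt 2$ with $\beta_v \in \mathbb N$. First I would use Equation~\eqref{eqn:commonneighbs} to constrain the neighbours of $x$: for a neighbour $w$ we need $\nu_{w,x} = 1 - t + 2\beta_w \geqslant 0$, so $\beta_w \geqslant (t-1)/2$, while Corollary~\ref{cor:maxDeg} gives $\alpha_w \leqslant t+1$, i.e.\ $\beta_w \leqslant (t+1)/\sqrt 2$. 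This confines $\beta_w$ to a short interval, giving only a handful of admissible neighbour-types, exactly as the set $\{t-1,t,t+1\}$ arose in the $\delta = t+1$ case.

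The main body of the argument would then be a case analysis on the possible values of $\beta_w$ for $w \in N_\Gamma(x)$, organised into claims that successively eliminate the largest values (as in Claim~1 and Claim~2 of Theorem~\ref{thm:kmin}). The key local tools are Lemma~\ref{lem:alphabound}, which forces adjacency or non-adjacency depending on the product $\alpha_v\alpha_w$, and the common-neighbour count $\nu_{v,w} = -(t-1)A_{v,w} + 2\beta_v\beta_w$ derived from Equation~\eqref{eqn:commonneighbs} with $\theta_1+\theta_2 = -t+1$. Whenever two vertices have $\nu_{v,w}$ equal to the degree of one of them, that vertex is forced to be adjacent to all neighbours of the other, which typically contradicts a previously computed small common-neighbour count $\nu_{x,w} = 2$. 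I would also invoke Lemma~\ref{lem:cells} (via the partition $C_1 = \{x,y\}$, $C_2$, $C_3$) to bound the degree of the induced subgraph on the non-neighbours of an adjacent pair, and count edges between $C_2 \cap N_\Gamma(x)$ and $C_3$ to pin down the distribution of $\alpha$-values in $C_3$.

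Once the case analysis forces all neighbours of $x$ to share a single small $\beta$-value, I would finish using the equitable valency partition guaranteed by Lemma~\ref{lem:equitable} (valid since by this stage only three distinct $\alpha$-values, hence three valencies, survive). Writing the quotient matrix $Q$ and using that $(\beta_1,\beta_2,\beta_3)^\top$ is a $Q$-eigenvector for $s$, together with the triangle count from Equation~\eqref{eqn:triangles} (the number of triangles through $x$ being determined by $\nu_{x,w}$ over neighbours $w$), yields rigid integer equations for the entries $k_{ij}$ and for $s$; these should prove inconsistent, for instance by forcing some $k_{ij} = 0$ that cannot vanish. I expect the main obstacle to be the combinatorial explosion of cases: because $\delta = t+2$ allows $\beta_x = 1$ rather than a literal $\alpha_x = 1$, the neighbour-type interval is slightly wider and the products $\beta_v\beta_w$ no longer trivially force integrality, so more intermediate $\alpha$-values must be separately excluded, and keeping the edge-counting inequalities between $C_2$ and $C_3$ tight enough to close every branch (rather than merely bounding) will require the hypothesis $t \geqslant 11$ in several places where $t \geqslant 7$ no longer suffices.
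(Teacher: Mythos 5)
Your overall strategy is exactly the paper's: assume $\delta(\Gamma)=t+2$, write $\alpha_v=\beta_v\sqrt2$ with $\beta_v\in\mathbb N$, run a claim-by-claim elimination of the admissible neighbour types of a minimum-degree vertex $x$ using Equation~\eqref{eqn:commonneighbs}, Lemma~\ref{lem:alphabound}, the partition $C_1=\{x,y\}$, $C_2$, $C_3$ together with Lemma~\ref{lem:cells}, an edge count between $C_2\cap N_\Gamma(x)$ and $C_3$, and finally the equitable quotient matrix from Lemma~\ref{lem:equitable} with the triangle count from Equation~\eqref{eqn:triangles}. (Two cosmetic differences: the paper also splits on the parity of $t$, which you will need since $(t\pm1)/2$ must be integers, and it deploys the quotient-matrix argument \emph{inside} the first claim rather than only at the end; but all your ingredients do appear in the paper's proof.)

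There is, however, one concrete gap in your setup that, as written, prevents the case analysis from closing. You bound the neighbour types by $\beta_w\leqslant(t+1)/\sqrt2$ via Corollary~\ref{cor:maxDeg}, and, combined with $\beta_w\geqslant(t-1)/2$, you claim this leaves only ``a handful'' of types, analogous to $\{t-1,t,t+1\}$ in Theorem~\ref{thm:kmin}. But the interval $[(t-1)/2,\,(t+1)/\sqrt2]$ contains roughly $(1/\sqrt2-1/2)t$ integers, a quantity that grows with $t$: already four values at $t=11$ and eight at $t=29$. So ``successively eliminating the largest values'' is not a finite, uniform-in-$t$ case analysis with your bound. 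What is needed is the sharper global bound the paper establishes at the outset: since $\Gamma$ is not a cone and every $\alpha_v$ is a multiple of $\sqrt2$ that is at least $\sqrt2$, the second inequality of Lemma~\ref{lem:alphabound} applied to a nonadjacent pair forces $\sqrt2(\alpha_v-\sqrt2)\leqslant t$ for the largest $\alpha_v$, i.e.\ $\alpha_v\leqslant(t+2)/\sqrt2$ and hence $\Delta(\Gamma)\leqslant(t^2+6t+4)/2$ and $\beta_v\leqslant(t+2)/2$. Only with this does the set of neighbour types collapse to the two values $\{(t-1)/2,(t+1)/2\}$ (for odd $t$), which is what allows the two-claim structure of Theorem~\ref{thm:kmin} to carry over. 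With that repair, your plan coincides with the paper's proof.
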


\begin{proof}
	By Theorem~\ref{thm:kmin}, the minimum degree is $\delta(\Gamma) \geqslant t+2$.
	Suppose for a contradiction that $\delta(\Gamma) = t+2$ and that $x$ is a vertex of $\Gamma$ that has valency $t+2$.
	Since $\Gamma$ is not a cone, we can apply Lemma~\ref{lem:alphabound} to find that the maximum degree $\Delta(\Gamma) \leqslant (t^2+6t+4)/2$.
	In terms of the Perron-Frobenius eigenvector, for any vertex $v$, we have $\sqrt{2} \leqslant \alpha_v \leqslant (t+2)/\sqrt{2}$.
	Moreover, since the pairwise products of $\alpha_i$s are integers and $\alpha_x = \sqrt{2}$, we have that $\alpha_v = \sqrt{2}\beta_v$ where $\beta_v$ is an integer for all vertices $v$.
	
	Now we split the proof into two distinct cases depending on the parity of $t$.
	Since these two cases are similar, we will leave the case when $t$ is even to the reader.
	Henceforth we assume that $t$ is odd.
	In this case the bound on $\alpha_v = \sqrt{2}\beta_v$ becomes $1 \leqslant \beta_v \leqslant (t+1)/2$.
	
	Let $w$ be a neighbour of $x$.
	The number of common neighbours $\nu_{w,x} = 1-t+\alpha_w\alpha_x = 1 - t + 2\beta_w$ is at least zero and hence $\beta_w \geqslant (t-1)/2$.
	Therefore, for $w \in N_\Gamma(x)$, we have that $\beta_w \in \{(t-1)/2, (t+1)/2 \}$.
	
	Let $y$ be a neighbour of $x$ and let $\pi$ be a partition of the vertices of $\Gamma$ with cells $C_1 = \{x,y\}$, $C_2 = \{ w \in V(\Gamma)\backslash C_1 \vl w \sim x \text{ or } w \sim y \}$, and $C_3 = \{ w \in V(\Gamma) \vl w \not \sim x \text{ and } w \not \sim y \}$.
	
	\paragraph{\textbf{Claim 1}} There is no vertex $w \in N_\Gamma(x)$ with $\beta_w=(t+1)/2$.
	\label{par:claim21}
	
	For a contradiction, assume that $\beta_y=(t+1)/2$.
	Let $z \in C_2 \backslash N_\Gamma(x)$.
	Note that the number of neighbours of $z$ that are also neighbours of $x$ or $y$ is at least $\nu_{x,z} + \nu_{y,z} - \nu_{x,y}$.
	Therefore $d_z = 2\beta_z^2 + t \geqslant \nu_{x,z} + \nu_{y,z} - \nu_{x,y}$ from which we must have $\beta_z \in \{1,2,(t-1)/2,(t+1)/2 \}$.
	Similarly, for all vertices $w \in C_3$ we have $d_w = 2\beta_w^2 + t \geqslant \nu_{w,x} + \nu_{w,y} - \nu_{x,y}$, and hence we have $\beta_w \in \{1,(t+1)/2 \}$.
	
	Now we show that we cannot have $\beta_z = 2$.
	Suppose for a contradiction that $\beta_z = 2$.
	Since $t \geqslant 11$, by Lemma~\ref{lem:alphabound}, $z$ is adjacent to every vertex $v$ with $\beta_v = (t-1)/2$ or $(t+1)/2$.
	In particular, $z$ is adjacent to every neighbour of $x$ and hence $z$ has $t+2$ common neighbours with $x$.
	But $\nu_{x,z} = 4$, which is a contradiction.
	
	Thus we know that all vertices $v$ of $\Gamma$ have $\beta_v \in \{1,(t-1)/2,(t+1)/2 \}$.
	By Lemma~\ref{lem:equitable}, the valency partition of $\Gamma$ is equitable.
	Let $Q$ be the quotient matrix given as
	\[
		Q = \begin{pmatrix}
			0 & k_{12} & k_{13} \\
			k_{21} & k_{22} & k_{23} \\
			k_{31} & k_{32} & k_{33}
		\end{pmatrix}.
	\]  
	Note that since the number of common neighbours between any two vertices is nonnegative, no pair of vertices $v$ and $w$ can be adjacent if $\beta_v = \beta_w = 1$.
	Since the vector $(2, t-1, t+1)^\perp$ is an eigenvector of $Q$ for the eigenvalue $s$ we have 
	\begin{align*}
		(t-1)k_{12} + (t+1)k_{13} &= 2s; \\
		k_{12} + k_{13} &= t + 2.
	\end{align*}
	Hence $s = (t+2)(t-1)/2 + k_{13}$.
	Furthermore we have
	\begin{align*}
		2 k_{21} + (t-1)k_{22} + (t+1)k_{23} &= s(t-1); \\
		k_{21} + k_{22} + k_{23} &= (t^2 + 1)/2.
	\end{align*}
	Therefore, $s \leqslant \frac{t^2+1}{2}\frac{t+1}{t-1} < (t+1)^2/2 + 2$.
	Combining this bound with our expression for $s$ gives $k_{13} < (t+7)/2$ and hence $k_{12} > (t-3)/2 \geqslant 4$.
	
	Let $n_1$, $n_2$, and $n_3$ denote the number of vertices $v$ with $\beta_v = 1$, $\beta_v = (t-1)/2$, and  $\beta_v = (t+1)/2$ respectively.
	Since the valency partition of $\Gamma$ is equitable and $k_{12}$ and $k_{13}$ are positive, we have $n_1(t+2) = n_1(k_{12} + k_{13}) = n_2k_{21} + n_3k_{31} \geqslant n_2 + n_3$.
	Moreover the number of vertices of $\Gamma$ is at least $d_y = (t+1)^2/2 + t$.
	It follows that $n_1 > 1$ and hence there must exist a vertex $z \in C_2 \cup C_3$ with $\beta_z = 1$.
	
	First assume that $z \in C_3$.
	Since $k_{12} > \nu_{x,z}$, there must exist a vertex $w \in C_2 \backslash N_\Gamma(x)$ such that $w \sim z$ and $\beta_w = (t-1)/2$.
	Let $u$ be a neighbour of $x$ with $\beta_u = (t+1)/2$.
	We must have that $u$ is adjacent to $w$.
	Indeed, if $u$ were not adjacent to $w$ then $\nu_{u,w} = d_w - 1$ which implies $u$ has at least $\nu_{w,x} - 1 = t-2$ common neighbours with $x$.
	But $u$ has only $\nu_{u,x} = 2$ common neighbours with $x$. 
	Now, since $\nu_{u,z} = 0$, the vertex $z$ cannot be adjacent to any vertex $v \in N_\Gamma(x)$ with $\beta_v = (t+1)/2$.
	Therefore $d_z \geqslant \nu_{x,z} + \nu_{y,z} = 2+ t+1 = t+3$, which is impossible.
	
	Otherwise assume that there are no vertices $v \in C_3$ with $\beta=1$.
	Therefore all vertices $v \in C_3$ have $\beta_v = (t+1)/2$.
	Take $z \in C_2$ with $\beta_z = 1$.
	Then $z$ has at least $d_z - \nu_{x,z} - \nu_{y,z} = t - 2$ neighbours in $C_3$.
	Hence $k_{13} \geqslant t$ which is a contradiction.
	 
	 By Claim~1 we can assume that for all $w \in N_\Gamma(x)$, we have $\beta_w = (t-1)/2$.
	 Note that, since $y$ is adjacent to $x$ and $\beta_y=(t-1)/2$, we have $\nu_{x,y} = 0$.
	 Furthermore the number of triangles containing $x$ is $0$, which means that $s = (t^2+t-2)/2$.
	 Let $w$ be a vertex in $C_3$.
	 By Lemma~\ref{lem:alphabound} we see that $\beta_w \leqslant (t+1)/2$, and since $w$ is adjacent neither $x$ nor $y$ we must have $\beta_w \leqslant (t-1)/2$.
	 Moreover, since $d_w \geqslant \nu_{w,x} + \nu_{w,y} - \nu_{x,y}$, we have that $\beta_w = 1$ or $(t-1)/2$.
	 
	 Since $\Delta(\Gamma) > s = (t^2+t-2)/2$, there must be a vertex $z \in C_2$ that has $\beta_z = (t+1)/2$.
	 Therefore $z$ has $2t$ neighbours in $C_3$.
 	Let $a$ and $b$ denote the number of vertices $v \in C_3$ having $\beta_v = 1$ and $(t-1)/2$ respectively.
 	Set $S = C_2 \cap N_\Gamma(x)$.
 	By counting the common neighbours with $x$ we see that the number of edges between $S$ and $C_3$ is equal to $2a + (t-1)b$.
 	For each vertex $w \in S$, the number of neighbours in $C_3$ is given by the formula $d_w - \nu_{w,x} - \nu_{w,y} + \nu_{x,y}$.
 	Therefore $2a+(t-1)b = t^2 +t$ and so $b \leqslant t+2 < 2t$.
 	Consequently $z$ must have a neighbour $v \in C_3$ with $\beta_v = 1$.
 	But this violates Claim~1.
\end{proof}

In the last part of this section we establish bounds for $t$.

\begin{lemma}\label{lem:a-4}
	Let $\Gamma$ be a graph in $\mathcal H^\prime(s,t)$ and let $x$ be a vertex with valency $\Delta(\Gamma)$.
	Suppose $\Gamma$ has a vertex $y$ with $\alpha_y \leqslant \alpha_x - 4$
	Then $t \leqslant 29$.
\end{lemma}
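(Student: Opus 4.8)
The plan is to bound the Perron entry $\alpha_x$ above by a linear function of $t$ and then to invoke Corollary~\ref{cor:tbound}; since we only want $t \leqslant 29$, it is enough to reach the hypothesis $\alpha_x \leqslant (t+10)/2$ of Corollary~\ref{cor:tbound}(e). I argue by contradiction, assuming $t \geqslant 30$. If $\omega(\Gamma) \geqslant 3$ then Corollary~\ref{cor:psige3} gives $\alpha_x \leqslant (t+3)/\sqrt{3}$, so Corollary~\ref{cor:tbound}(c) yields $t \leqslant 21$, a contradiction; hence $\omega(\Gamma) \in \{1,2\}$. Since $t \geqslant 11$, Theorem~\ref{thm:kmin2} gives $\delta(\Gamma) \geqslant t+3$, so every vertex $v$ satisfies $\alpha_v^2 = d_v - t \geqslant 3$; together with $\omega(\Gamma) \in \{1,2\}$ this forces $\alpha_v \geqslant 2$ for all $v$, and in particular $\alpha_y \geqslant 2$.

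With $\alpha_y \geqslant 2$ the extreme cases close immediately. If $x \not\sim y$ then $2 \leqslant \alpha_y \leqslant \alpha_x - 4 \leqslant \alpha_x - 2$, so Corollary~\ref{cor:tboundal}(b) gives $\alpha_x \leqslant (t+4)/2$ and Corollary~\ref{cor:tbound}(a) gives $t \leqslant 15$. If $x \sim y$ and $\alpha_y \geqslant 4$, then $\alpha_x \geqslant \alpha_y + 4 \geqslant 8$, so Corollary~\ref{cor:tboundal}(a) gives $\alpha_x \leqslant (t+7)/2$ and Corollary~\ref{cor:tbound}(d) gives $t \leqslant 22$. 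Both contradict $t \geqslant 30$, leaving only the case $x \sim y$ with $2 \leqslant \alpha_y < 4$; here $\alpha_y \in \{2,3\}$ if $\omega(\Gamma) = 1$ and $\alpha_y = 2\sqrt{2}$ if $\omega(\Gamma) = 2$.

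For this case I would first observe that if $x$ has \emph{any} non-neighbour $z$ with $\alpha_z \leqslant \alpha_x - 2$, then (as $\alpha_z \geqslant 2$) Corollary~\ref{cor:tboundal}(b) and Corollary~\ref{cor:tbound}(a) finish as above; so I may assume every non-neighbour $z$ of $x$ has $\alpha_z > \alpha_x - 2$. Now form the partition $C_1 = \{x,y\}$, $C_2$, $C_3$ of Lemma~\ref{lem:cells}. If $C_3 \neq \emptyset$, take any $z \in C_3$: then $z \not\sim y$ with $\alpha_z > \alpha_x - 2 > \alpha_y$, so $d_z > d_y$ and Lemma~\ref{lem:alphabound}(2) gives $(\alpha_z - \alpha_y)\alpha_y \leqslant t$, i.e. $\alpha_z \leqslant t/\alpha_y + \alpha_y$. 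Since $\alpha_x - 2 < \alpha_z$ this forces $\alpha_x < t/\alpha_y + \alpha_y + 2$, and checking $\alpha_y \in \{2,3,2\sqrt{2}\}$ shows the right-hand side is at most $(t+10)/2$ (the value $\alpha_y = 2$ is the one that lands closest, giving $\alpha_x < (t+8)/2$). Corollary~\ref{cor:tbound}(e) then gives $t \leqslant 29$, a contradiction; this is the subcase that pins the final constant.

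The main obstacle is the remaining subcase $C_3 = \emptyset$, where $\{x,y\}$ is a dominating edge, so that the construction above yields no common non-neighbour. Here the parabola corollaries are not enough: evaluating $A\alpha = s\alpha$ at $y$ and using $\nu_{y,v} = 1 - t + \alpha_y\alpha_v \geqslant 0$ (Equation~\eqref{eqn:commonneighbs}) for each neighbour $v$ of $y$, one finds $\alpha_v \geqslant (t-1)/\alpha_y$, and hence that the spectral radius $s$ is quadratic in $t$; by Lemma~\ref{lem:rhobound} this gives $s < \Delta(\Gamma) = \alpha_x^2 + t$, which only bounds $\alpha_x$ from \emph{below}, near $t/2$, leaving a window in which Corollary~\ref{cor:tbound}(e) does not apply. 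I therefore expect this subcase to demand a dedicated counting argument, combining the dominating-edge identity $n = d_x + d_y - \nu_{x,y}$, the vertex-count bounds of Corollary~\ref{cor:nbound}, the sharpened form of Lemma~\ref{lem:boundVerts} arising from $\alpha_y(\alpha_x - \alpha_y) \geqslant 2\alpha_x - 4$ (valid since $\alpha_y \geqslant 2$ and $\alpha_x - \alpha_y \geqslant 4$), and ultimately the integrality and, when few valencies occur, the equitability constraints (Lemma~\ref{lem:equitable}) special to three-eigenvalue graphs. Closing this dominating-edge case is the crux of the lemma.
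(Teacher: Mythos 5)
Your reduction to the case $x \sim y$ with $\alpha_y \in \{2,3,2\sqrt{2}\}$ is sound, and your treatment of the subcase $C_3 \neq \emptyset$ is correct and lands on the same numerical bounds the paper obtains (via Lemma~\ref{lem:alphabound} applied to a non-adjacent small/large pair, then Corollary~\ref{cor:tbound}). But the proof is not complete: the dominating-edge subcase $C_3 = \emptyset$ is left open, and as you yourself diagnose, your partition cannot handle it, because there $n = d_x + d_y - \nu_{x,y}$ involves the \emph{large} degree $d_x$, so no contradiction with the vertex-count bounds follows. This is a genuine gap, not a routine verification left to the reader.

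The paper avoids the problem by not anchoring the dichotomy to the fixed pair $(x,y)$. It sets $S = \{v : \alpha_v \in \{2,3\}\}$ (in the case $\omega(\Gamma)=1$; similarly for $\omega(\Gamma)=2$) and $T = V(\Gamma)\setminus S$, notes that every $w \in T$ may be assumed to satisfy $\alpha_w \geqslant \alpha_x - 3$ (otherwise the first part of the argument applies with $w$ in place of $y$), and then splits: either some vertex of $T$ misses some vertex of $S$, giving $\alpha_x \leqslant (t+10)/2$ essentially as in your $C_3 \neq \emptyset$ case; or a vertex $y' \in S$ is adjacent to all of $T$, in which case $|T| \leqslant d_{y'} = \alpha_{y'}^2 + t \leqslant t+9$ --- bounded by the degree of a \emph{small} vertex --- while $S$ is independent for $t \geqslant 11$ (two adjacent vertices with $\alpha$-values in $\{2,3\}$ would have $\nu < 0$), so $|S| \leqslant m \leqslant 2t+15$ by interlacing and Equation~\eqref{eqn:multeq}. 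Then $n = |S|+|T| \leqslant 3t+24$ contradicts the quadratic lower bound of Corollary~\ref{cor:nbound}. That is the idea your argument is missing: in the ``everything adjacent'' alternative, the dominating vertex must be chosen from the small side, where its degree is linear in $t$, rather than being the maximum-degree vertex $x$.
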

\begin{proof}
	If $\alpha_y \geqslant 4$ then it follows from Corollary~\ref{cor:tboundal} and Corollary~\ref{cor:tbound} that $t \leqslant 22$.
	Hence, we assume that $\alpha_y < 4$.
	We are free to assume that $t \geqslant 11$, in which case, by Theorem~\ref{thm:kmin2}, we have $\alpha_y \geqslant \sqrt{3}$.
	If $\omega(\Gamma) \geqslant 3$ then we can apply Corollary~\ref{cor:psige3} together with Corollary~\ref{cor:tbound} to obtain $t \leqslant 21$.
	Hence we can assume that $\omega(\Gamma) = 1$ or $2$.
	Therefore, $\alpha_y$ is equal to $2$, $3$, or $2\sqrt{2}$.
	
	Here we split into two cases for $\omega(\Gamma) = 1$ or $2$.
	Since the arguments are similar, we will only consider the case when $\omega(\Gamma) = 1$.
	
	Let $S$ be the set of vertices $v$ of $\Gamma$ with $\alpha_v = 2$ or $3$ and let $T$ denote $V(\Gamma)\backslash S$.
	Every vertex $w$ in $T$ has $\alpha_w \geqslant 4$, therefore, by the argument above, we can assume that $\alpha_w \geqslant \alpha_x - 3$, otherwise we are done.
	Now, if there is a vertex $z \in T$ that is not adjacent to a vertex $y \in S$ then by Lemma~\ref{lem:alphabound} we have
	$t \geqslant (\alpha_z-\alpha_y)\alpha_y \geqslant (\alpha_x-3-\alpha_y)\alpha_y$ which gives
	\[
		\alpha_x \leqslant 
		\begin{cases}
			(t+18)/3, & \text{ if $\alpha_y=3$}; \\
			(t+10)/2, & \text{ if $\alpha_y=2$}. \\
		\end{cases}
	\]
	Applying Corollary~\ref{cor:tbound} gives $t \leqslant 19$ and $t \leqslant 29$ respectively.
	
	Otherwise, suppose $y$ is adjacent to every vertex in $T$.
	This means that $|T| = \alpha_y^2 + t$.
	Since we have assumed that $t \geqslant 11$, the subgraph induced on $S$ contains no edges.
	Therefore, by interlacing $|S| \leqslant m$.
	Hence, using Equation~\eqref{eqn:multeq} and Lemma~\ref{lem:rhobound}, we have $|S| \leqslant \frac{n-1+s}{t+1} \leqslant \frac{n+d_x}{t+1} \leqslant \frac{2n}{t+1}$.
	Furthermore, by Corollary~\ref{cor:nbound}, we have $|S| \leqslant 2t + 15$.
	Also by Corollary~\ref{cor:nbound} we have $\frac{1}{2}(t+\frac{1}{2})^2 < n = |S|+|T| \leqslant \alpha_y^2 + 3t + 15$.
	It follows that $t \leqslant 11$.
\end{proof}

Let us examine the proof of the previous result.
In order to bound $t$ we first found bounds for $\alpha_x$ then applied Corollary~\ref{cor:tbound}.
Since we assumed that $t \geqslant 11$ we see that in each part of the proof the bound $\alpha_x \leqslant (t+10)/2$ holds. 

\begin{lemma}\label{lem:5vals}
	Let $\Gamma$ be a graph in $\mathcal H^\prime(s,t)$ having at least $5$ distinct valencies.
	Then $t \leqslant 29$.
\end{lemma}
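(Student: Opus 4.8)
The plan is to reduce the hypothesis of five distinct valencies to the single gap hypothesis of Lemma~\ref{lem:a-4}. First I would translate the valency condition into a statement about the Perron--Frobenius eigenvector. Since for every vertex $v$ we have $d_v = \alpha_v^2 + t$, distinct valencies correspond exactly to distinct (positive) values of $\alpha_v$, and so at least five distinct valencies give at least five distinct values $\alpha_v$. By Corollary~\ref{cor:intparams} each such value has the form $\alpha_v = \beta_v\sqrt{\omega(\Gamma)}$ with $\beta_v \in \mathbb{N}$, so five distinct valencies force at least five distinct positive integers among the $\beta_v$.

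Next I would extract the required gap. Let $x$ be a vertex with $d_x = \Delta(\Gamma)$, so that $\beta_x$ is the largest of these integers, and let $y$ be a vertex of minimum valency, so that $\beta_y$ is the smallest. Since the $\beta_v$ take at least five distinct positive integer values and $\beta_y \geqslant 1$, the largest and smallest differ by $\beta_x - \beta_y \geqslant 4$. Consequently
\[
	\alpha_x - \alpha_y = (\beta_x - \beta_y)\sqrt{\omega(\Gamma)} \geqslant 4\sqrt{\omega(\Gamma)} \geqslant 4,
\]
since $\omega(\Gamma) \geqslant 1$; in other words $\alpha_y \leqslant \alpha_x - 4$.

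Finally, this is precisely the hypothesis of Lemma~\ref{lem:a-4} applied to the maximum-valency vertex $x$ together with the witness $y$, which immediately yields $t \leqslant 29$. There is essentially no obstacle here: the genuine work has already been done inside Lemma~\ref{lem:a-4}, and the only thing to check is that five distinct valencies really do produce a pair with $\alpha$-gap at least $4$. That is forced by the integrality structure of Corollary~\ref{cor:intparams}, and it holds uniformly regardless of the value of $\omega(\Gamma)$ because $4\sqrt{\omega(\Gamma)} \geqslant 4$ in every case; the choice of $y$ as a vertex of minimum valency simply guarantees the widest possible gap from $\alpha_x$.
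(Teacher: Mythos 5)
Your proof is correct and follows the same route as the paper: pick $x$ of maximum valency, observe that five distinct valencies force a vertex $y$ with $\alpha_y \leqslant \alpha_x - 4$, and invoke Lemma~\ref{lem:a-4}. The paper simply asserts the existence of such a $y$, whereas you spell out the justification via the integrality structure from Corollary~\ref{cor:intparams}; that is a welcome but not substantively different elaboration.
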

\begin{proof}
	Let $x$ be a vertex with $d_x = \Delta(\Gamma)$.
	Since $\Gamma$ has at least $5$ distinct valencies, there must be a vertex $y$ having $\alpha_y \leqslant \alpha_x - 4$.
	Then the result follows from Lemma~\ref{lem:a-4}.
\end{proof}

The next result is the main result of this section.

\begin{theorem}\label{thm:4vals}
	Let $\Gamma$ be a graph in $\mathcal H^\prime(s,t)$.
	Then $t \leqslant 29$.
\end{theorem}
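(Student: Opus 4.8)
The plan is to combine Lemma~\ref{lem:5vals} with a case analysis of the graphs having few valencies. Since $\Gamma \in \mathcal H^\prime(s,t)$ is neither regular nor biregular, it has either three or four distinct valencies, and Lemma~\ref{lem:5vals} already disposes of the case of five or more valencies. Throughout, let $x$ be a vertex with $d_x = \Delta(\Gamma)$, and recall from Corollary~\ref{cor:maxDeg} that $\alpha_x \leqslant t+1$ while $d_v = \alpha_v^2 + t$ for every vertex $v$. The overarching goal is to force the bound $\alpha_x \leqslant (t+10)/2$, since Corollary~\ref{cor:tbound}(e) then yields $t \leqslant 29$.

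First I would separate off the ``spread out'' case: if $\Gamma$ has any vertex $y$ with $\alpha_y \leqslant \alpha_x - 4$, then Lemma~\ref{lem:a-4} immediately gives $t \leqslant 29$. Hence I may assume every vertex satisfies $\alpha_v > \alpha_x - 4$, so that all $\alpha$-values cluster in an interval of width less than $4$. When $\omega(\Gamma) \geqslant 3$ this case closes at once: Corollary~\ref{cor:psige3} gives $\alpha_x \leqslant (t+3)/\sqrt{3}$ and Corollary~\ref{cor:tbound}(c) gives $t \leqslant 21$. So the substance is the clustered case with $\omega(\Gamma) \in \{1,2\}$, where the admissible $\alpha$-values are integer (respectively $\sqrt{2}$-integer) multiples spanning a range of at most $3$, leaving room for only three or four distinct valencies.

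For the clustered case I would assume $t \geqslant 11$ (otherwise there is nothing to prove) and use Theorem~\ref{thm:kmin2} to get $\alpha_v \geqslant \sqrt{3}$, hence $\alpha_v \geqslant 2$ when $\omega(\Gamma)=1$ and $\alpha_v \geqslant 2\sqrt{2}$ when $\omega(\Gamma)=2$. Because $\Gamma$ is not a cone, Theorem~\ref{thm:disconn} (or the argument of Corollary~\ref{cor:maxDeg}) furnishes a non-neighbour $y$ of $x$. If some such non-neighbour has $\alpha_y \leqslant \alpha_x - 2$, then $\alpha_x \geqslant \alpha_y + 2 \geqslant 4$ and Corollary~\ref{cor:tboundal}(b) forces $\alpha_x \leqslant (t+4)/2$, whence Corollary~\ref{cor:tbound}(a) gives $t \leqslant 15$. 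This reduces everything to the configurations in which every non-neighbour of the maximum-valency vertex has $\alpha$-value within $1$ of $\alpha_x$.

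This last configuration is the main obstacle, precisely because the crude non-adjacency inequalities of Lemma~\ref{lem:alphabound} no longer separate $\alpha_x$ from $t$. Here I would exploit the small number of valencies directly: apply Lemma~\ref{lem:cells} and Lemma~\ref{lem:boundVerts} to an adjacent pair to bound the cell $C_3$ and hence $n$, and invoke the vertex bounds of Corollary~\ref{cor:nbound}. When $\Gamma$ has exactly three valencies, Lemma~\ref{lem:equitable} makes the valency partition equitable, so I would write down its $3\times 3$ quotient matrix $Q$, use that the vector of $\alpha$-values is a $Q$-eigenvector for $\theta_0 = s$, and---exactly as in the proofs of Theorems~\ref{thm:kmin} and~\ref{thm:kmin2}---combine the resulting linear relations with the trace identity to pin down $s$ and derive a contradiction for large $t$. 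The four-valency clustered case, where Lemma~\ref{lem:equitable} is unavailable, is the hardest point: I expect to handle it by a careful common-neighbour count between the valency classes (in the style of Theorem~\ref{thm:kmin2}), ruling out the gap-$1$ clustering unless $t$ is small, which is where the bulk of the technical work lies.
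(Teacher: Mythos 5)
Your reductions track the paper's own opening moves exactly: dispose of $\omega(\Gamma)\geqslant 3$ via Corollary~\ref{cor:psige3} and Corollary~\ref{cor:tbound}, dispose of any vertex $y$ with $\alpha_y\leqslant\alpha_x-4$ via Lemma~\ref{lem:a-4}, and use Corollary~\ref{cor:tboundal} to force the maximum-valency vertex to be adjacent to every vertex $v$ with $\alpha_v\leqslant\alpha_x-2$. But at that point you stop proving and start hoping. The configuration that remains --- all $\alpha$-values lying in $\{\alpha_x-3,\dots,\alpha_x\}$ --- is exactly where the content of the theorem lives, and neither of your two proposed tools closes it. For the three-valency subcase you propose to ``pin down $s$ and derive a contradiction for large $t$'' from the quotient matrix of the equitable valency partition; but no contradiction should be expected here, since the theorem asserts only $t\leqslant 29$, and the linear relations coming from Lemma~\ref{lem:equitable} do not by themselves produce an inequality between $\alpha_x$ and $t$ strong enough to feed into Corollary~\ref{cor:tbound}. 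For the four-valency subcase you explicitly defer ``the bulk of the technical work'' to an unspecified common-neighbour count. So the proposal has a genuine gap precisely at the step the theorem exists to supply.

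What the paper actually does in the clustered case is a specific counting device you do not have. Writing $S_i=\{v\;|\;\alpha_v=\alpha_x-i\}$ and $n_i=|S_i|$, it evaluates the eigenvector equation $(A\alpha)_v=s\alpha_v$ at $v=x$ and at a vertex $y$ of smallest $\alpha$-value, and combines each with the degree count $|N(v)|=\alpha_v^2+t$ to obtain $3n_3+2n_2+x_1=\alpha_x(\alpha_x^2+t-s)$ and $y_2+2y_1+3n_0=\alpha_y(s-\alpha_y^2-t)$, where $x_i$ and $y_i$ count neighbours of $x$ and $y$ in $S_i$. The two left-hand sides sum to at most $3n$; on the right, the unknown $s$ is eliminated using $s<\Delta(\Gamma)=\alpha_x^2+t$ from Lemma~\ref{lem:rhobound}, and what survives is a \emph{lower} bound $n\geqslant 2\alpha_x^2-O(\alpha_x)$. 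Played against the upper bound $n<t^2+O(t)$ of Corollary~\ref{cor:nbound} this gives $\alpha_x<(t+7)/\sqrt{2}$, and two further rounds of bootstrapping through the sharper bound of Lemma~\ref{lem:boundVerts} improve this to $\alpha_x\leqslant(t+10)/2$, at which point Corollary~\ref{cor:tbound}(e) yields $t\leqslant 29$. This ``sum the two eigenvector equations and bound by $3n$'' step is the missing idea; without it, or a substitute of comparable quantitative strength, your outline does not reach the stated bound.
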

\begin{proof}
	By Corollary~\ref{cor:psige3} and Corollary~\ref{cor:tbound}, we can assume that $\omega(\Gamma) = 1$ or $2$.
	Hence we split into two cases for $\omega(\Gamma) = 1$ or $2$.
	Since the two cases are similar, we will leave the case $\omega(\Gamma) = 2$ to the reader.
	
	Let $x$ be a vertex with $d_x = \Delta(\Gamma)$.
	If there is a vertex $y$ with $\alpha_y \leqslant \alpha_x - 4$ then the result follows from Lemma~\ref{lem:a-4}.
	We can therefore assume for each vertex $v$ we have $\alpha_v \in \{\alpha_x - 3,\alpha_x - 2, \alpha_x - 1, \alpha_x \}$.
	Moreover, by Theorem~\ref{thm:kmin2}, we can assume $\alpha_x \geqslant 6$.
	Define the sets $S_i := \{ v \in V(\Gamma) | \alpha_v = \alpha_x - i \}$ and set  $n_i = |S_i|$.
	By Corollary~\ref{cor:tboundal}  and Corollary~\ref{cor:tbound}, we can assume that each vertex in $S_0$ is adjacent to every vertex in $S_2 \cup S_3$.
	Observe that, since $\Gamma$ has at least three distinct valencies, the set $S_3 \cup S_2$ is nonempty.
	Hence we take $y \in S_3$ if $n_3 \ne 0$ otherwise we take $y \in S_2$.
	Let $x_i$ (resp. $y_i$) denote the number of neighbours of $x$ (resp. $y$) in $S_i$.
	Note that $y_0 = n_0$, $x_2 = n_2$, and $x_3 = n_3$.
	 
	Using the Perron-Frobenius eigenvector we have the following equations.
	\begin{align*}
		n_3(\alpha_x-3) + n_2(\alpha_x-2) + x_1(\alpha_x-1) + x_0\alpha_x &= s \alpha_x \\
		n_3+n_2+x_1+x_0 = \alpha_x^2 + t.
	\end{align*}
	This reduces to 
	\begin{equation}
		\label{eqn:x}
		3n_3+2n_2+x_1 = \alpha_x(\alpha_x^2+t-s).
	\end{equation}
	We also have
	\begin{align*}
		y_3(\alpha_x-3) + y_2(\alpha_x-2) + y_1(\alpha_x-1) + n_0\alpha_x &= s \alpha_y \\
		y_3+y_2+y_1+n_0 = \alpha_y^2 + t,
	\end{align*}
	which gives 
	\begin{equation}
		\label{eqn:y}
		y_2+2y_1+3n_0 = \alpha_y(s-\alpha_y^2-t).
	\end{equation}
	Now the left hand side of the sum of Equations~\eqref{eqn:x} and \eqref{eqn:y} is at most $3n$.
	Since the same is true for the sum of the right hand sides of Equations~\eqref{eqn:x} and \eqref{eqn:y}, we obtain the bound 
	\[
		n \geqslant 
		\begin{cases}
			2\alpha_x^2 - 9\alpha_x+9, & \text{ if $\alpha_y = \alpha_x - 3$;} \\
			2\alpha_x^2 - 5\alpha_x-1, & \text{ if $\alpha_y = \alpha_x - 2$.}
		\end{cases}
	\]
	
	We continue considering the two cases separately.
	First suppose $\alpha_y = \alpha_x - 2$.
	Combining the above with Corollary~\ref{cor:nbound}, we have $2\alpha_x^2 - 5\alpha_x-1 \leqslant n < t^2 + 8t + 18 + 18/(t-1)$.
	Moreover, since $\alpha_x \leqslant t + 1$, we obtain the bound $\alpha_x^2 < (t^2 + 13t+ 24)/2 + 9/(t-1)$.
	We can assume that $t \geqslant 11$.
	And, since $\alpha_x^2$ is an integer, we have $\alpha_x^2 < (t^2 + 13t+ 26)/2 < (t+7)^2/2$.
	Therefore $\alpha_x < (t+7)/\sqrt{2}$.
	
	Applying Lemma~\ref{lem:boundVerts} with $\alpha_x < (t+7)/\sqrt{2}$ gives
	\[
		n \leqslant \frac{t^2+(24-\sqrt{2})t + 125-9\sqrt{2}}{2} + \frac{168-16\sqrt{2}}{2(t-1)}.
	\]
	Therefore, again using our lower bound for $n$, we obtain
	\[
		2\alpha_x^2 - 5\alpha_x-1 \leqslant \frac{t^2+(24-\sqrt{2})t + 129-9\sqrt{2}}{2}.
	\]
	Hence we use the bound $\alpha_x \leqslant (t+13+2\sqrt{2})/2$.
	This time, applying Lemma~\ref{lem:boundVerts} and following the same method, we can obtain the bound $\alpha_x \leqslant (t+10)/2$.
	The lemma then follows from Corollary~\ref{cor:tbound}.
	
	The case when $\alpha_y = \alpha_x - 3$ is similar but is easier since one can begin by using the bound from Lemma~\ref{lem:alphabound}.
	That is, $3(\alpha_x-3) = (\alpha_x - \alpha_y) \leqslant 2(t-1)$, and hence $\alpha_x \leqslant (2t+7)/3$.
	Then apply Lemma~\ref{lem:boundVerts} and so on.
\end{proof}

	Just as we saw in the proof of Lemma~\ref{lem:a-4}, we also have that if $t \geqslant 11$ then everywhere in the proof of Theorem~\ref{thm:4vals} the bound $\alpha_x \leqslant (t+10)/2$ holds.
	We record this observation as a corollary. 
	\begin{corollary}\label{cor:nalphabound}
		Let $\Gamma$ be a graph in $\mathcal H^\prime(s,t)$ with $t \geqslant 11$ and let $x$ be a vertex with $d_x = \Delta(\Gamma)$.
		Then $\alpha_x \leqslant (t+10)/2$.
	\end{corollary}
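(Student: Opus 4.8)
The plan is to prove this not by any new argument but by revisiting the case analyses already carried out in the proofs of Lemma~\ref{lem:a-4} and Theorem~\ref{thm:4vals}, and checking that whenever $t \geqslant 11$ each intermediate upper bound on $\alpha_x$ that was used to invoke Corollary~\ref{cor:tbound} is in fact already at most $(t+10)/2$. Since Theorem~\ref{thm:4vals} tells us $t \leqslant 29$, throughout I may assume $11 \leqslant t \leqslant 29$, and it is precisely this range that makes the required numerical comparisons valid. As in those proofs, by Corollary~\ref{cor:psige3} and Corollary~\ref{cor:tbound} I would first dispose of $\omega(\Gamma) \geqslant 3$: there Corollary~\ref{cor:psige3} gives $\alpha_x \leqslant (t+3)/\sqrt{3}$, and the inequality $(t+3)/\sqrt{3} \leqslant (t+10)/2$ holds for all $t \leqslant 42$, hence on our range. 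So I may assume $\omega(\Gamma) \in \{1,2\}$.

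Next I would follow the branch where there exists a vertex $y$ with $\alpha_y \leqslant \alpha_x - 4$, retracing Lemma~\ref{lem:a-4}. Each of its subcases delivers one of the bounds $\alpha_x \leqslant (t+4)/2$, $(t+7)/2$, $(t+18)/3$, or $(t+10)/2$, and every one of these is at most $(t+10)/2$ for $t \geqslant 11$ (the first two trivially, and $(t+18)/3 \leqslant (t+10)/2$ already for $t \geqslant 6$). The one subcase that does not overtly bound $\alpha_x$ is where $y$ is adjacent to every vertex of $T$; there the counting argument forces $t \leqslant 11$, hence $t = 11$ and $\alpha_y = 3$ (in the $\omega=1$ case), and since $x \in T$ we then have $x \sim y$, so Lemma~\ref{lem:alphabound} gives $3(\alpha_x - 3) \leqslant 2(t-1) = 20$ and therefore $\alpha_x \leqslant 9 < (t+10)/2$. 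The case $\omega(\Gamma) = 2$ is handled identically with $\alpha_y = 2\sqrt{2}$.

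It then remains to treat the complementary branch of Theorem~\ref{thm:4vals}, in which every vertex $v$ satisfies $\alpha_v \in \{\alpha_x - 3, \alpha_x - 2, \alpha_x - 1, \alpha_x\}$. When $\alpha_y = \alpha_x - 2$ the proof already terminates at $\alpha_x \leqslant (t+10)/2$, so nothing new is required. When $\alpha_y = \alpha_x - 3$ the first bound produced is $\alpha_x \leqslant (2t+7)/3$, which exceeds $(t+10)/2$ once $t > 16$; here I would run the same refinement loop used in the $\alpha_y = \alpha_x - 2$ case, namely feed the current bound into Lemma~\ref{lem:boundVerts} to bound $n$ from above, combine this with the lower bound $n \geqslant 2\alpha_x^2 - 9\alpha_x + 9$, and iterate until the estimate on $\alpha_x$ descends to $(t+10)/2$.

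The hard part is entirely bookkeeping rather than conceptual. The main obstacle is confirming that the refinement iteration in the $\alpha_y = \alpha_x - 3$ branch genuinely settles at $(t+10)/2$, and not at some slightly larger linear-in-$t$ quantity; this requires tracking the constants carefully through two successive applications of Lemma~\ref{lem:boundVerts}, exactly as in the displayed computation for the $\alpha_y = \alpha_x - 2$ case. The only other point that needs care is that the comparison $(t+3)/\sqrt{3} \leqslant (t+10)/2$ is available only because Theorem~\ref{thm:4vals} has already confined us to $t \leqslant 29 < 42$; once that is recorded, the corollary follows by assembling the case-by-case bounds above.
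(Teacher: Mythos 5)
Your proposal is correct and takes essentially the same route as the paper: the paper's own justification is precisely the remark that, for $t\geqslant 11$, every intermediate bound on $\alpha_x$ appearing in the proofs of Lemma~\ref{lem:a-4} and Theorem~\ref{thm:4vals} is at most $(t+10)/2$, which is exactly the case-by-case audit you carry out (your explicit handling of the ``$y$ adjacent to all of $T$'' subcase and of the $\alpha_y=\alpha_x-3$ iteration supplies detail the paper leaves implicit, at the same level of rigour).
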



\section{Description of our computation} 
\label{sec:description_of_our_computation}

In this section we describe our computations.
In the previous section, we reduced the proof of Theorem~\ref{thm:main} to a finite search.
Indeed, for an $n$-vertex graph $\Gamma \in \mathcal H^\prime(s,t)$, Corollary~\ref{cor:nbound} provides us with a quadratic upper bound for the number of vertices $n$ in terms of the smallest eigenvalue $-t$.
Together with Theorem~\ref{thm:4vals}, it follows that there are only finitely many graphs in $\cup_{t,s \in \mathbb N}\mathcal H^\prime(s,t)$.

Our method can be described loosely in the following way.
First enumerate tuples $(n,t,s,m)$ consisting of values for the number of vertices, smallest eigenvalue, largest eigenvalue, and eigenvalue multiplicity that satisfy certain spectral conditions.
Then for each such tuple we enumerate all possible valencies.
Finally we check for feasible valency multiplicities.

Now we will describe our computation in detail.

\subsection{Constraints for the spectrum} 
\label{sub:spectral_bounds}

Fix $t \geqslant 3$ and let $\Gamma$ be an $n$-vertex graph in $\mathcal H^\prime(s,t)$.
We can bound $n$ in terms of $t$ using Corollary~\ref{cor:nbound}.
Unfortunately, this bound becomes less practically useful as $t$ gets large.
Fortunately, using Corollary~\ref{cor:nalphabound} together with Corollary~\ref{cor:tbound}, we can obtain a better bound for $n$ when $t \geqslant 11$.
That is, we obtain the following bounds.

\begin{corollary}\label{cor:compnbounds}
	Let $\Gamma$ be an $n$-vertex graph in $\mathcal H^\prime(s,t)$.
	Then
	\[
		n \leqslant 
		\begin{cases}
			t^2 + 8t + 18 + 18/(t-1), & \text{ if $3 \leqslant t \leqslant 10$;} \\
			t^2/4 + 17t/2 + 48 + 116/(t-1), & \text{ if $11 \leqslant t \leqslant 29$.} \\
		\end{cases}
	\]
\end{corollary}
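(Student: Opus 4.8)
The plan is to split on the two ranges $3 \leqslant t \leqslant 10$ and $11 \leqslant t \leqslant 29$ exactly as in the statement. The first range is immediate: Corollary~\ref{cor:nbound} already gives $n < t^2 + 8t + 18 + 18/(t-1)$, and a strict inequality certainly implies the desired nonstrict bound $n \leqslant t^2 + 8t + 18 + 18/(t-1)$. So for $3 \leqslant t \leqslant 10$ there is nothing further to do.

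For the range $11 \leqslant t \leqslant 29$ the strategy is to feed the sharper eigenvector bound of Corollary~\ref{cor:nalphabound} into the upper bound~\eqref{eqn:nuppal} of Lemma~\ref{lem:boundVerts}. Let $x$ be a vertex with $d_x = \Delta(\Gamma)$. Since $t \geqslant 11$, Corollary~\ref{cor:nalphabound} gives $\alpha_x \leqslant (t+10)/2$, and inequality~\eqref{eqn:nuppal} gives
\[
	n \leqslant \frac{(t+1)(\alpha_x^2 - \alpha_x + 3t) + 2\alpha_x^2 + 2t - 2}{t-1}.
\]
First I would rewrite the numerator as a polynomial in $\alpha_x$, namely $(t+3)\alpha_x^2 - (t+1)\alpha_x + 3t^2 + 5t - 2$. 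Its derivative in $\alpha_x$ is $2(t+3)\alpha_x - (t+1)$, which is positive for every $\alpha_x \geqslant 1$ (indeed $2(t+3) - (t+1) = t+5 > 0$). As $\alpha_x \geqslant 1$ always holds, the right-hand side is increasing in $\alpha_x$, so it is maximised at the endpoint $\alpha_x = (t+10)/2$.

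It then remains to substitute $\alpha_x = (t+10)/2$ and simplify. Clearing the factor of $4$ in $\alpha_x^2$, the numerator becomes $(t^3 + 33t^2 + 158t + 272)/4$; dividing by $t-1$ via the identity $t^3 + 33t^2 + 158t + 272 = (t-1)(t^2 + 34t + 192) + 464$ yields
\[
	n \leqslant \frac{t^2 + 34t + 192}{4} + \frac{116}{t-1} = \frac{t^2}{4} + \frac{17t}{2} + 48 + \frac{116}{t-1},
\]
which is exactly the claimed bound. The only point requiring genuine care—and the sole place where the argument could break—is the monotonicity check that justifies evaluating the right-hand side at the upper endpoint for $\alpha_x$; once that is in hand, the remainder is a routine polynomial division. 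I therefore expect no serious obstacle: the substance of the corollary is already contained in Corollaries~\ref{cor:nbound} and~\ref{cor:nalphabound} together with Lemma~\ref{lem:boundVerts}, and this statement merely repackages them into the explicit piecewise bound used by the computation.
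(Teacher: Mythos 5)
Your proposal is correct and follows exactly the route the paper intends: the first case is a direct restatement of Corollary~\ref{cor:nbound}, and the second case substitutes the bound $\alpha_x \leqslant (t+10)/2$ from Corollary~\ref{cor:nalphabound} into inequality~\eqref{eqn:nuppal} of Lemma~\ref{lem:boundVerts}, just as the paper's derivation of Corollary~\ref{cor:nbound} substitutes $\alpha_x \leqslant t+1$. Your algebra (the numerator $(t+3)\alpha_x^2-(t+1)\alpha_x+3t^2+5t-2$, the monotonicity check, and the division $t^3+33t^2+158t+272=(t-1)(t^2+34t+192)+464$) checks out and reproduces the stated bound exactly.
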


As for the other two spectral parameters, $s$ and $m$, they are linked via the equation
$s=1-n+m(t+1)$; this is Equation~\eqref{eqn:multeq}.
We also use the bounds from Lemma~\ref{lem:rhobound}, Theorem~\ref{thm:bell}, and Corollary~\ref{cor:maxDeg}.
Furthermore, it is well-known that the sum of the squares of the eigenvalues is equal to two times the number of edges and the sum of the cubes of the eigenvalues is equal to six times the number of triangles.
The last result we use is that $s \leqslant n - 6$ \cite[Lemma 2.13]{CGGK}.

Putting this altogether we give the following definition.

\begin{definition}\label{def:spec}
	Fix $t \in \mathbb \{3,\dots,29\}$.
	We call a triple $(n,s,m)$ a \textbf{spectral parameter array} if the following conditions hold.
	\begin{enumerate}[(a)]
		\item If $3 \leqslant t \leqslant 10$ then $n \leqslant t^2 + 8t + 18 + 18/(t-1)$ otherwise if $11 \leqslant t \leqslant 29$ then $n \leqslant t^2/4 + 17t/2 + 48 + 116/(t-1)$;
		\item $(n-m)(n-m+1) \geqslant 2n$ and $(m+1)(m+2) \geqslant 2n$;
		\item $n < s^2 + 1$;
		\item $t < s < \min\{(t+1)^2 + t,n-6\}$;
		\item $s^2 + n-1-m + mt^2$ is even and less than $ns$;
		\item $s^3 + n-1-m - mt^3$ is divisible by $6$.
	\end{enumerate}
\end{definition}

Let  $\mathcal S(t)$ denote the set of spectral parameter arrays for $t$.
The number of spectral parameter arrays for each $t$ satisfying $3 \leqslant t \leqslant 29$ is given in Table~\ref{tab:specparams}.


\subsection{Constraints for the valencies} 
\label{sub:valency_constraints}

Now we consider the set of possible valencies for a putative graph $\Gamma$ with given spectral parameter array $(n,s,m) \in \mathcal S(t)$.
Let $v$ be a vertex of $\Gamma$.
Then clearly $\alpha_v \geqslant 1$ and, by Corollary~\ref{cor:maxDeg}, we have $\alpha_v \leqslant t+1$.
Using Theorem~\ref{thm:kmin} and Theorem~\ref{thm:kmin2}, if $t \geqslant 7$ or $t \geqslant 11$ then $\alpha_v\geqslant 2$ or $\alpha \geqslant 3$ respectively.
Furthermore, by Corollary~\ref{cor:intparams}, we have $\alpha_v \in \sqrt{\omega(\Gamma)}\mathbb N$.
By Lemma~\ref{lem:rhobound} the largest and smallest valencies must be respectively greater than and less than $s$.
We also use the bounds in Lemma~\ref{lem:alphabound}.
In particular, for any two vertices $v$ and $w$, if $(\alpha_v-\alpha_w)\alpha_w > t$ then $v$ must be adjacent to $w$.
Hence, for such vertices, we check Lemma~\ref{lem:boundVerts}, that is, $d_v + d_w - \nu_{v,w} \geqslant n - 2m$.
Moreover, $\Gamma$ is connected so there must exist a pair of vertices satisfying that bound.
Since $\Gamma$ is not a cone, for each vertex there must exist another vertex that is not adjacent to it. 
Let $x$ be a vertex with $d_x = \Delta(\Gamma)$.
Then, in particular, by Theorem~\ref{thm:disconn}, there must exist some vertex $v$ such that $(\alpha_x-\alpha_v)\alpha_v \leqslant t$.
Finally, using Equation~\eqref{eqn:triangles}, we derive the expression $(s-t+1)\alpha_v^2-(t-1)t$ for the number of closed walks of length three from $v$.
This number must be a nonnegative even integer for all vertices $v$.

We will use the following bounds for the valency multiplicities.

\begin{lemma}\label{lem:nmaxbound}
	Let $\Gamma$ be a graph in $\mathcal H^\prime(s,t)$ with distinct valencies $k_1 < k_2 < \dots < k_r$ and corresponding (nonzero) valency multiplicities $n_1,\dots,n_r$.
	Set $T = s^2 + n-1-m + mt^2$.
	Then for all $2 \leqslant i \leqslant r$ we have
	\begin{align}
		n_i &\leqslant \left ( T -nk_1 -\sum_{j=i+1}^{r}n_j (k_j - k_{1})- \sum_{j=1}^{i-1}(k_j-k_{1})\right )/(k_i-k_{1}); \label{eqn:upp} \\
		n_i &\geqslant \left ( T- nk_{i-1} -\sum_{j=i+1}^{r}n_j (k_j - k_{i-1}) - \sum_{j=1}^{i-1}(k_j-k_{i-1})\right )/(k_i-k_{i-1}). \label{eqn:low}
	\end{align}
\end{lemma}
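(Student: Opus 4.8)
The quantity $T = s^2 + n - 1 - m + mt^2$ is the trace of $A^2$, which equals the sum of all valencies: $T = \sum_{v} d_v = \sum_{j=1}^{r} n_j k_j$. This is the single identity I would exploit. The strategy is to isolate the term $n_i k_i$ in this sum and bound the remaining terms $\sum_{j \neq i} n_j k_j$ in two complementary ways, using the fact that each unknown multiplicity $n_j$ is at least $1$ (since the valencies are distinct and all multiplicities are nonzero) and that the total $\sum_j n_j = n$.

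\textbf{Deriving the upper bound \eqref{eqn:upp}.} Starting from $T = \sum_{j} n_j k_j$, I would write $n_i k_i = T - \sum_{j \neq i} n_j k_j$ and then replace $k_j$ by $k_1$ wherever doing so increases the right-hand side appropriately. The cleanest route: subtract $n k_1 = k_1 \sum_j n_j$ from both sides to get $\sum_j n_j (k_j - k_1) = T - n k_1$, so that $n_i(k_i - k_1) = T - nk_1 - \sum_{j \neq i} n_j(k_j - k_1)$. For the indices $j > i$ I keep the full term $n_j(k_j - k_1)$; for the indices $j < i$ (with $j \geq 2$, since the $j=1$ term vanishes) I use $n_j \geq 1$ and $k_j > k_1$ to bound $n_j(k_j-k_1) \geq (k_j - k_1)$, dropping these terms down to their minimum. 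Since every discarded contribution is nonnegative and each retained lower bound is valid, dividing through by $k_i - k_1 > 0$ yields precisely \eqref{eqn:upp}.

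\textbf{Deriving the lower bound \eqref{eqn:low}.} This is the symmetric counterpart, pivoting on $k_{i-1}$ instead of $k_1$. Subtracting $n k_{i-1}$ from $T$ gives $\sum_j n_j(k_j - k_{i-1}) = T - n k_{i-1}$, hence $n_i(k_i - k_{i-1}) = T - nk_{i-1} - \sum_{j \neq i} n_j(k_j - k_{i-1})$. Now the terms split by sign: for $j > i$ the factor $k_j - k_{i-1} > 0$ and I keep $n_j(k_j - k_{i-1})$ in full; for $j < i$ the factor $k_j - k_{i-1} \leq 0$, so using $n_j \geq 1$ gives $n_j(k_j - k_{i-1}) \leq (k_j - k_{i-1})$, and moving these to the other side produces a lower bound. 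Dividing by $k_i - k_{i-1} > 0$ gives \eqref{eqn:low}.

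\textbf{The main obstacle.} The real work is purely bookkeeping: getting the direction of each inequality right depending on the sign of $k_j - k_1$ (always nonnegative, so uniform) versus $k_j - k_{i-1}$ (which changes sign at $j = i-1$), and confirming that the denominators $k_i - k_1$ and $k_i - k_{i-1}$ are strictly positive so that division preserves the inequality. I expect no conceptual difficulty, only the need to track the two index ranges $j < i$ and $j > i$ carefully and to invoke $n_j \geq 1$ exactly where the retained coefficient has the sign that makes $n_j \geq 1$ the useful estimate.
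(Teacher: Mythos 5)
Your proposal is correct and follows essentially the same route as the paper: both identify $T=\operatorname{tr}(A^2)=\sum_j n_jk_j$, isolate the $n_ik_i$ term, and use $n_j\geqslant 1$ on the indices $j<i$ (pivoting on $k_1$ for the upper bound and on $k_{i-1}$ for the lower bound), so the sign bookkeeping you describe is exactly the content of the paper's argument. No gaps.
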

\begin{proof}
	From the trace of the square of the adjacency matrix we have the equality $T = \sum_{j=1}^r n_j k_j$.
	Hence we can write $n_i k_i = T - \sum_{j=i+1}^{r}n_j k_j - \sum_{j=1}^{i-1}n_j k_j $.
	The upper bound \eqref{eqn:upp} follows since
	\begin{align*}
		\sum_{j=1}^{i-1}n_j k_j &\geqslant \sum_{j=1}^{i-1}n_j k_{1} + \sum_{j=1}^{i-1}(k_{j}-k_{1}) \\
								&=(n-n_i)k_1 - \sum_{j=i+1}^{r}n_j k_1 + \sum_{j=1}^{i-1}(k_{j}-k_{1}).
	\end{align*}
	The lower bound \eqref{eqn:low} follows similarly.
\end{proof}

In particular, by Lemma~\ref{lem:nmaxbound}, given valencies $k_1 < k_2 < \dots < k_r$ and sum of the squares of the eigenvalues, $T$, we have that $n_r(k_r-k_1) \leqslant T - n k_1 - \sum_{i=1}^{r-1}(k_i-k_1)$.
Since the valency multiplicities are positive this bound gives us another condition on the valencies since $n_r \geqslant 1$.

Putting these valency conditions together, we give our next definition.
For convenience we will use the function $c_t(a,b) := \sqrt{(a-t)(b-t)}$.

\begin{definition}\label{def:val}
	Fix $t \in \mathbb \{3,\dots,29\}$ and take some $(n,s,m) \in \mathcal S(t)$.
	An $r$-tuple $(k_1,\dots,k_r)$ with $k_1 < \dots < k_r$ and $r \geqslant 3$ is called a $(n,s,m)$-\textbf{feasible valency array} if following conditions are satisfied.
	\begin{enumerate}[(a)]
		\item For all $i \in \{1,..,r\}$ the squarefree part of $(k_i-t)$ is constant.
		\item $k_r \leqslant (t+1)^2 + t$ and 
		\[
			k_1 \geqslant \begin{cases}
				t+3 & \text{ if $t \geqslant 11$;} \\
				t+2 & \text{ if $t \geqslant 7$;} \\
				t + 1 & \text{ otherwise. }
			\end{cases}
		\]
		\item For each pair $i,j \in \{1,\dots,r\}$ with $i > j$ we have $c_t(k_i,k_j) - (k_j-t) \leqslant 2t -2$.
		\item If $c_t(k_i,k_j) - (k_j-t) > t$ then $k_i + k_j - c_t(k_i,k_j) + t - 1 \geqslant n - 2m$.
		\item For all $i \in \{1,..,r\}$, the expression $(s-t+1)(k_i-t)-(t-1)t$ is a nonnegative even integer.
		\item There exists $i\in \{1,\dots,r-1\}$ such that $c_t(k_r,k_i) - (k_i-t) \leqslant t$.
		\item There exist $i,j\in \{1,\dots,r\}$ such that $k_i + k_j - c_t(k_i,k_j) + t - 1 \geqslant n - 2m$.
		\item And $s^2 + n-1-m + mt^2 - n k_1 - \sum_{i=2}^r(k_i-k_1) \geqslant k_r-k_1$.
	\end{enumerate}
\end{definition}

Let $\mathcal K(t)$ denote the subset of $\mathcal S(t)$ consisting of those spectral parameter arrays $S$ such that there exists a $S$-feasible valency array.
The cardinality of $\mathcal K(t)$ for each $t$ satisfying $3 \leqslant t \leqslant 29$ is given in Table~\ref{tab:specparams}.
 

\subsection{Constraints for the valency multiplicities} 
\label{sub:valency_multiplicities}

The final check we need to do is with the valency multiplicities.
Suppose there exists an $n$-vertex graph $\Gamma$ with eigenvalues $s$, $1$, and $-t$ with multiplicities $1$, $n-1-m$, and $m$ respectively and valencies $k_1< \dots< k_r$ with multiplicities $n_1, \dots, n_r$ respectively.
Let $\alpha$ be an eigenvector for $s$ satisfying Equation~\eqref{eqn:alpha}.
Then by summing the entries of the equations $A\alpha = s \alpha$ and Equation~\eqref{eqn:alpha} we obtain the equations
\begin{align}
	\sum_{i=1}^r n_i k_i \alpha_i &= s \sum_{i=1}^r n_i \alpha_i; \label{eqn:mult1}\\
	\left (\sum_{i=1}^r n_i \alpha_i \right )^2 &= \sum_{i=1}^r n_i (k_i-1)(k_i+t). \label{eqn:mult2}
\end{align}

Using Equation~\eqref{eqn:alpha} and Equation~\eqref{eqn:triangles} we can also obtain the equations
\begin{align}
	\sum_{i=1}^r n_i k_i &= s^2 + n-1-m + mt^2; \label{eqn:mult3}\\
	\sum_{i=1}^r n_i \left ((s-t+1)(k_i-t)-(t-1)t \right ) &= s^3 + n-1-m - mt^3. \label{eqn:mult4}
\end{align}

Finally we check the independence number.
Suppose there exists $h \in \{1,\dots,r\}$ such that for all $i,j \in \{1,\dots,h\}$ we have $k_i + k_j - c_t(k_i,k_j) + t - 1 < n - 2m$.
Every pair of vertices with degrees in $\{k_1,\dots,k_h\}$ are not adjacent and hence form an independent set.
Therefore there exists and independent set of size $\sum_{i=1}^h n_i$.
By Lemma~\ref{lem:indep}, the size of this set is at most $m$.
If such an $h$ exists, set $\mathfrak h = h$, otherwise set $\mathfrak h = 0$.
By above we have that $\sum_{i=1}^\mathfrak{h} n_i \leqslant m$.

Putting these conditions together we give our final definition.

\begin{definition}\label{def:mult}
	Fix $t \in \mathbb \{3,\dots,29\}$, $S \in \mathcal K(t)$, and an $S$-feasible valency array $K=\{k_1,\dots,k_r\}$.
	An $r$-tuple $(n_1,\dots,n_r)$ is called a $(S,K)$-\textbf{feasible multiplicity array} if Equations \eqref{eqn:mult1}, \eqref{eqn:mult2}, \eqref{eqn:mult3}, and \eqref{eqn:mult4} are satisfied, Lemma~\ref{lem:nmaxbound} is satisfied, and $\sum_{i=1}^\mathfrak{h} n_i \leqslant m$.
\end{definition}

Let $\mathcal M(t)$ denote the subset of $\mathcal K(t)$ consisting of those spectral parameter arrays $S$ such that there exists a $(S,K)$-feasible multiplicity array for some $S$-feasible valency array $K$.
The cardinality of $\mathcal M(t)$ for each $t$ satisfying $3 \leqslant t \leqslant 29$ is given in Table~\ref{tab:specparams}.

In order to enumerate all possible $(S,K)$-feasible multiplicity arrays in a reasonable amount of time, one can use Lemma~\ref{lem:nmaxbound} to recursively compute the bounds for consecutive $n_i$.
Indeed, we see that in Lemma~\ref{lem:nmaxbound}, if $n_r,\dots,n_{r-l}$ are fixed for some $l\in \{0,\dots,r-2\}$ we can compute the bounds for $n_{r-l-1}$ and we can bound $n_r$ in terms of $t$ and elements of $S$ and $K$. 
We remark that we wrote our computer program in \texttt{MAGMA}~\cite{Magma} and our computation will run on a standard personal computer in a matter of hours.
The code is available online~\cite{WEB}.

\begin{table}[htbp]
	\setlength{\tabcolsep}{4pt}
	\begin{center}
	\begin{tabular}{c|c|c|c||c|c|c|c||c|c|c|c}
		$t$ & $|\mathcal S(t)|$ & $|\mathcal K(t)|$ & $|\mathcal M(t)|$ &$t$ & $|\mathcal S(t)|$ & $|\mathcal K(t)|$ & $|\mathcal M(t)|$ & $t$ & $|\mathcal S(t)|$ & $|\mathcal K(t)|$ & $|\mathcal M(t)|$ \\
		\hline
		3 & 128 & 58 & 0    &       12 & 497 & 287 & 0    &        	21 & 189 & 137 & 0    \\
		4 & 196 & 116 & 1   & 		13 & 455 & 237 & 0    &  		22 & 163 & 137 & 0    \\
		5 & 277 & 113 & 2   & 		14 & 409 & 245 & 0    &  		23 & 143 & 120 & 0    \\
		6 & 375 & 173 & 0   & 		15 & 377 & 214 & 0    &  		24 & 118 & 104 & 0    \\
		7 & 492 & 159 & 1   & 		16 & 340 & 220 & 0    &  		25 & 95 & 92 & 0      \\
		8 & 610 & 225 & 0   & 		17 & 311 & 184 & 0    &  		26 & 76 & 71 & 0      \\
		9 & 748 & 233 & 0   & 		18 & 273 & 190 & 0    &  		27 & 61 & 59 & 0      \\
		10 & 898 & 297 & 0  & 		19 & 248 & 162 & 0    &  		28 & 43 & 43 & 0      \\
		11 & 546 & 272 & 0  & 		20 & 220 & 172 & 0    &  		29 & 27 & 27 & 0      \\
	\end{tabular}
	\end{center}
	\caption{Cardinality of the sets $\mathcal S(t)$, $\mathcal K(t)$, and $\mathcal M(t)$ for each $t$.}
	\label{tab:specparams}
\end{table}

\subsection{Surviving parameters} 
\label{sub:surviving_parameters}

As one can see in Table~\ref{tab:specparams}, our computation returned four spectral parameter arrays that have feasible valency arrays and corresponding feasible multiplicity arrays.
We display these arrays in Table~\ref{tab:survive}.
\begin{table}[htbp]
	\begin{center}
	\begin{tabular}{c|c|c|c}
		$t$ & $(n,s,m)$ & $(k_1,\dots,k_r)$ & $(n_1,\dots,n_r)$ \\
		\hline
		$4$ & $(31,15,9)$ & $(5,8,13,20)$ & $(5,10,5,11)$ \\
		$5$ & $(36,19,9)$ & $(7,13,23)$ & $(6,12,18)$ \\
		$5$ & $(45,28,12)$ & $(6,9,21,30)$ & $(6,3,3,33)$ \\
		$7$ & $(45,20,8)$ & $(11,16,23,32)$ & $(6,27,6,6)$
	\end{tabular}
	\end{center}
	\caption{The surviving parameter arrays.}
	\label{tab:survive}
\end{table}

For each of the parameters in Table~\ref{tab:survive}, we will show why there cannot exist a corresponding graph.

\begin{itemize}
	\item There does not exist any graph having spectrum $\{15^1,1^{21},(-4)^9\}$, valencies $(5,8,13,20)$, and multiplicities $(5,10,5,11)$.
	Suppose to the contrary.
	Let $x$ be a vertex with degree $d_x = 5$.
	Using Equation~\eqref{eqn:triangles}, we see that there are $0$ closed walks of length three from $x$.
	Therefore each neighbour $y$ of $x$ must have $d_y \leqslant 13$.
	On the other hand, by Lemma~\ref{lem:alphabound}, we have that each neighbour $y$ of $x$ must have degree $d_y \geqslant 13$.
	Hence, each vertex of degree $5$ is adjacent to each of the $5$ vertices of degree $13$.
	Let $w$ be another vertex of degree $5$.
	Then the number of common neighbours of $w$ and $x$ is $5$, but this is a contradiction since $\nu_{w,x} = 1$.
	
	\item There does not exist any graph having spectrum $\{19^1,1^{26},(-5)^9\}$, valencies $(7,13,23)$, and multiplicities $(6,12,18)$.
	Suppose to the contrary.
	Then, by Lemma~\ref{lem:equitable}, the valency partition is equitable and hence each vertex in $V_1$ must be adjacent to
	$k_{11}$ vertices in $V_1$, $k_{12}$ vertices in $V_2$ and $k_{13}$ vertices in $V_3$ such that $k_{11} + k_{12}+k_{13} = k_1$.
	Moreover (see \cite[Section 4.3]{Dam3ev}), we also have the equations $\sum_{j=1}^3 k_{1j}\sqrt{(k_j-t)} = s\sqrt{(k_1-t)}$ and
	\begin{equation*}
		\sum_{j=1}^3 k_{1j}k_j = (1-t)k_1 +t + \sqrt{(k_1-t)}\sum_{j=1}^3 n_j \sqrt{(k_j-t)}.
	\end{equation*}
	Let $x$ and $y$ be vertices with degrees $d_x = d_y = k_1 = 7$.
	The vertex $x$ cannot be adjacent to $y$. 
	Indeed, if $x$ were adjacent to $y$ then $\nu_{x,y} = -2$ but this number should be nonnegative.
	Hence $k_{11} = 0$ and there is no solution to the system of equations above.
	
	\item There does not exist any graph having spectrum $\{28^1,1^{32},(-5)^{12}\}$, valencies $(6,9,21,30)$, and multiplicities $(6,3,3,33)$.
	Suppose to the contrary.
	Since the number of common neighbours of any pair of vertices is nonnegative, the only possible neighbours of vertices in $V_1$ are vertices in $V_3$ or $V_4$.
	Let $x$ be a vertex with degree $d_x = 6$.
	Using Equation~\eqref{eqn:triangles}, we know that there are four closed walks of length $3$ from $x$.
	Moreover, $x$ has precisely one common neighbour with each adjacent vertex in $V_4$.
	Therefore, $x$ can have at most four neighbours in $V_4$.
	Hence each vertex has at least two neighbours in $V_3$.
	Since each vertex in $V_1$ has precisely one common neighbour there must be at least as many vertices in $V_3$ as there are in $V_1$.
	But $|V_1| = 6 > 3 = |V_3|$.
	
	\item There does not exist any graph having spectrum $\{20^1,1^{36},(-7)^{8}\}$, valencies $(11,16,23,32)$, and multiplicities $(6,27,6,6)$.
	Suppose to the contrary.
	This graph corresponds to the case of equality in the bound in Theorem~\ref{thm:bell}.
	But the case of equality in the bound in Theorem~\ref{thm:bell} is known \cite{br03} to correspond to an unique graph on $36$ vertices.
\end{itemize}



\bibliographystyle{myplain}
\bibliography{sbib}

\end{document}